\providecommand{\scr}{\mathcal}
\newtheorem{prop}{Proposition}[subsection]
\newtheorem{lem}[prop]{Lemma}
\newtheorem*{lemm*}{Lemma}
\theoremstyle{definition}
\newtheorem{empt}[prop]{}
\newtheorem{dfn}[prop]{Definition}
\newtheorem{rem}[prop]{Remark} 
\newtheorem{ntn}[prop]{Notation} 
\newtheorem{exs}[prop]{Examples} 
\newtheorem{ex}[prop]{Example} 
\newtheorem*{rem*}{Remark}
\theoremstyle{thm}
\newtheorem{thm}[prop]{Theorem}
\newtheorem*{thm*}{Theorem}
\newtheorem*{lem*}{Lemma}
\newtheorem*{cor*}{Corollary}
\newtheorem*{prop*}{Proposition}
\theoremstyle{dfn}
\newtheorem*{dfn*}{Definition}
\numberwithin{equation}{prop}
\newcommand{\riso}{ \overset{\sim}{\longrightarrow}\, }
\newcommand{\Spf}{\mathrm{Spf}\,}
\renewcommand{\sp}{\mathrm{sp}}
\newcommand{\FF}{{\mathcal{F}}}
\newcommand{\B}{{\mathcal{B}}}
\newcommand{\E}{{\mathcal{E}}}
\newcommand{\G}{{\mathcal{G}}}
\newcommand{\D}{{\mathcal{D}}}
\newcommand{\PP}{{\mathcal{P}}}
\newcommand{\QQ}{{\mathcal{Q}}}
\renewcommand{\O}{{\mathcal{O}}}
\newcommand{\V}{\mathcal{V}}
\newcommand{\W}{\mathcal{W}}
\newcommand{\Y}{\mathcal{Y}}
\newcommand{\ZZ}{\mathcal{Z}}
\newcommand{\X}{\mathfrak{X}}
\newcommand{\U}{\mathfrak{U}}
\renewcommand{\P}{\mathbb{P}}
\newcommand{\DD}{\mathbb{D}}
\renewcommand{\L}{\mathbb{L}}
\newcommand{\R}{\mathbb{R}}
\newcommand{\Q}{\mathbb{Q}}
\newcommand{\Z}{\mathbb{Z}}
\newcommand{\N}{\mathbb{N}}
\newcommand{\hdag}{  \phantom{}{^{\dag} }    }
\begin{document}

\title{Unipotent monodromy and arithmetic $\D$-modules}
\author{Daniel Caro} 

\date{}

\maketitle

\begin{abstract}
In the framework of Berthelot's theory of arithmetic $\D$-modules, 
we introduce the notion of arithmetic $\D$-modules having potentially unipotent monodromy. 
For example, from Kedlaya's semistable reduction theorem, 
the overconvergent isocrystals with Frobenius structure have potentially unipotent monodromy. 
We construct some coefficients stable under Grothendieck's six operations, 
containing overconvergent isocrystals with Frobenius structure
and whose objects have potentially unipotent monodromy. 

On the other hand, 
we introduce the notion of arithmetic $\D$-modules having quasi-unipotent monodromy. 
These objects are overholonomic, 
contain the isocrystals having potentially unipotent monodromy 
and are stable under Grothendieck's six operations and under base change.

\end{abstract}

\tableofcontents

\bigskip 

Let $\V$ be a complete discrete valued ring of mixed characteristic $(0,p)$, 
$K$ its field of fractions,  
$k$ its residue field which is supposed to be perfect. 
Let $X$ be a smooth quasi-projective $k$-variety, $Z:=X - Y$ be a simple normal crossing divisor of $X$, let 
$Z = \cup _{i=1} ^{r} Z _i$ be the decomposition of $Z$ into irreducible components
and $\prod _{i=1} ^{r} \Sigma _i$ be a subset of $(\Z _p / \Z) ^{r}$. 
Let $E $ be an overconvergent isocrystal on $(Y,X/K)$.
Atsushi Shiho defined  (see the end of the definition \cite[3.9]{Shiho-logextension}) the notion of 
overconvergent isocrystals having $\prod _{i=1} ^{r} \Sigma _i$-unipotent monodromy. 
When for any $i$ the sets $\Sigma _i$ are equal to a set $\Sigma$, we will say for short 
``overconvergent isocrystals having $ \Sigma$-unipotent monodromy''. 
When $ \Sigma _i= \{ 0\}$, we retrieve Kedlaya's unipotent monodromy (see 
\cite{kedlaya-semistableI}). 
Without non Liouvilleness conditions, these isocrystals have no finite cohomology and in particular
they are not (over)holonomic. 

From now, suppose $\Sigma$ is a subgroup of $ \Z _p / \Z$ with $p$-adically non Liouville numbers, then 
it follows from \cite[2.3.13]{caro-Tsuzuki} that 
overconvergent isocrystals 
 with $ \Sigma$-unipotent monodromy 
are overholonomic. 
We recall that with Frobenius structures,  
we already know the stability under Grothendieck's six operations of the overholonomicity (see \cite{caro-Tsuzuki})
but, without Frobenius structures, the stability under tensor products is still an open question. 
In this paper, in the framework of Berthelot's arithmetic $\D$-modules,
we introduce the notion of 
arithmetic $\D$-modules having potentially $\Sigma$-unipotent monodromy (see \ref{dfn-potSigma}). 
For example an overconvergent isocrystal has potentially $\Sigma$-unipotent monodromy
if by definition it gets
$ \Sigma$-unipotent monodromy after some generically etale alteration. 
By descent from this alteration, we check that they are overholonomic. 
Moreover, a reformulation of Kedlaya's semistable reduction theorem 
(see \cite{kedlaya-semistableI,kedlaya-semistableII,kedlaya-semistableIII,kedlaya-semistableIV})
is that overconvergent isocrystals with some Frobenius structure have  potentially unipotent monodromy.
We also introduce the notion of 
arithmetic $\D$-modules having quasi-$\Sigma$-unipotent monodromy (see \ref{dfnqu}). 
These coefficients are overholonomic, 
contain isocrystals having potentially $ \Sigma$-unipotent monodromy 
and are stable under Grothendieck's six operations and base change. 
Finally, we construct some coefficients stable under Grothendieck's six operations and base change, 
containing overconvergent isocrystals with Frobenius structure
and whose objects have potentially unipotent monodromy.

\subsection*{Notation and convention}

In the rest of the paper, we fix $\Sigma$ a subgroup of $ \Z _p / \Z$ with $p$-adically non Liouville numbers. 
We choose $\tau \colon \Z _p / \Z \to \Z _p$ a section of the canonical extension 
$\Z _p \to \Z _p / \Z$ such that $\tau (0) =0$.

We also fix $\V$ a complete discrete valued ring of mixed characteristic $(0,p)$. 
We denote by  $K$ the field of fractions of $\V$,  
$k$ its residue field which is supposed to be perfect. 
A formal scheme over $\V$ means a formal scheme for the $p$-adic topology.
By convention, our formal schemes are always separated. 
The special fiber of a formal scheme over $\V$ will be denoted by the corresponding capital roman letter.

\section{Stability under cohomological operations of data of coefficients}
\subsection{Data of coefficients}

\begin{dfn}
\label{DVR}
We denote by $\mathrm{DVR}  (\V)$ the full subcategory of the category of 
$\V$-algebras 
whose objects are 
complete discrete valued rings of mixed characteristic $(0,p)$ with perfect residue field. 
\end{dfn}

\begin{empt}
\label{t-structure-coh}
Let $\W$ be an object of $\mathrm{DVR}  (\V)$, and 
$\X$ be a smooth formal $\W$-scheme.
If there is no possible confusion (some confusion might arise if for example we do know that $\V \to \W$ is finite and etale), 
for any integer $m \in \N$, 
we denote 
$\smash{\widehat{\D}} _{\X/\Spf (\W)} ^{(m)}$
(resp. $\smash{\D} ^\dag _{\X/\Spf (\W), \Q}$)
simply by 
$\smash{\widehat{\D}} _{\X} ^{(m)}$
(resp. $\smash{\D} ^\dag _{\X\Q}$).
Berthelot checked the following equivalence of categories (see \cite[4.2.4]{Beintro2}):
\begin{equation}
\label{limeqcat}
\underrightarrow{\lim}
\colon 
\smash{\underrightarrow{LD}} ^{\mathrm{b}} _{\Q,\mathrm{coh}} ( \smash{\widehat{\D}} _{\X} ^{(\bullet)})
\cong 
D ^{\mathrm{b}}  _{\mathrm{coh}} (\smash{\D} ^\dag _{\X\Q}).
\end{equation}

The category 
$D ^{\mathrm{b}}  _{\mathrm{coh}} (\smash{\D} ^\dag _{\X\Q})$ 
is endowed with its usual t-structure.
Via \ref{limeqcat}, we get a t-structure on 
$\smash{\underrightarrow{LD}} ^{\mathrm{b}} _{\Q,\mathrm{coh}} ( \smash{\widehat{\D}} _{\X} ^{(\bullet)})$
whose heart is 
$\smash{\underrightarrow{LM}}  _{\Q,\mathrm{coh}} ( \smash{\widehat{\D}} _{\X} ^{(\bullet)})$
(see Notation \cite[2.2.4]{caro-stab-sys-ind-surcoh}).
In fact, from 
\cite[1.2.7]{caro-stab-sys-ind-surcoh}
and 
\cite[2.5.1]{caro-stab-sys-ind-surcoh},
we have canonical explicit cohomological functors
$\mathcal{H} ^n 
\colon 
\smash{\underrightarrow{LD}} ^{\mathrm{b}} _{\Q,\mathrm{coh}} ( \smash{\widehat{\D}} _{\X} ^{(\bullet)})
\to 
\smash{\underrightarrow{LM}} _{\Q,\mathrm{coh}} ( \smash{\widehat{\D}} _{\X} ^{(\bullet)})$.
The equivalence of categories 
\ref{limeqcat} commute with the 
cohomogical functors $\mathcal{H} ^n $
(where the cohomogical functors $\mathcal{H} ^n $ on 
$D ^{\mathrm{b}}  _{\mathrm{coh}} (\smash{\D} ^\dag _{\X\Q})$
are the obvious ones),
 i.e. 
$\underrightarrow{\lim}
\mathcal{H} ^n (\E ^{(\bullet)})$
is  canonically isomorphic
to 
$\mathcal{H} ^n (\underrightarrow{\lim} \,\E ^{(\bullet)})$.

Last but not least, 
via Theorem \cite[2.5.7]{caro-stab-sys-ind-surcoh}
and 
Lemma \cite[2.5.2]{caro-stab-sys-ind-surcoh}, 
we remind the equivalence of categories 
$\smash{\underrightarrow{LD}} ^{\mathrm{b}} _{\Q,\mathrm{coh}} ( \smash{\widehat{\D}} _{\X} ^{(\bullet)})
\cong 
D ^{\mathrm{b}} _{\mathrm{coh}}
(
\smash{\underrightarrow{LM}} _{\Q} ( \smash{\widehat{\D}} _{\X} ^{(\bullet)})
)
$
which is also compatible with t-structures
(the t-structure on 
$D ^{\mathrm{b}} _{\mathrm{coh}}
(\smash{\underrightarrow{LM}} _{\Q} ( \smash{\widehat{\D}} _{\X} ^{(\bullet)})$
is the canonical one as the derived category of an abelian category).
\end{empt}

\begin{dfn}
\label{dfn-datacoef}
A {\it data of coefficients $\mathfrak{C}$ over $\V$}
will be the data for any object
$\W$ of $\mathrm{DVR}  (\V)$, 
for any  
smooth formal scheme $\X$ over $\W$ 
of a full subcategory of 
$\smash{\underrightarrow{LD}} ^{\mathrm{b}} _{\Q,\mathrm{coh}} ( \smash{\widehat{\D}} _{\X} ^{(\bullet)})$,
which will be denoted by $\mathfrak{C} (\X)$.
If there is no ambiguity with $\V$, 
we simply say a {\it data of coefficients}.
\end{dfn}

\begin{exs}
\label{ex-Dcst}

\begin{enumerate}
\item We define the data of coefficients $\mathfrak{B} _\emptyset$ as follows: 
for any object $\W$ of $\mathrm{DVR}  (\V)$, 
for any  smooth formal scheme $\X$ over $\W$,  
the category $\mathfrak{B} _\emptyset (\X)$ is the full subcategory of 
$\smash{\underrightarrow{LD}} ^{\mathrm{b}} _{\Q,\mathrm{coh}} ( \smash{\widehat{\D}} _{\X} ^{(\bullet)})$
whose unique object is 
$\O _{\X} ^{(\bullet)}$ 
(where $\O _{\X} ^{(\bullet)}$ is the constant object $\O _{\X} ^{(m)} = \O _{\X}$).

\item We will need the larger data of coefficients $\mathfrak{B} _\mathrm{div}$ 
defined as follows: 
for any object $\W$ of $\mathrm{DVR}  (\V)$, 
for any  smooth formal scheme $\X$ over $\W$,  
the category $\mathfrak{B} _\mathrm{div}(\X)$ is the full subcategory of 
$\smash{\underrightarrow{LD}} ^{\mathrm{b}} _{\Q,\mathrm{coh}} ( \smash{\widehat{\D}} _{\X} ^{(\bullet)})$
whose objects are of the form  
$\widehat{\B} ^{(\bullet)} _{\X} (T)$, 
where $T$ is any divisor of the special fiber of $\X$ (the sheaf 
$\widehat{\B} ^{(\bullet)} _{\X} (T)$ is defined in \cite[4.2.4]{Be1}). 
From Corollary \cite[3.5.3]{caro-stab-sys-ind-surcoh}, 
we have
$\widehat{\B} ^{(\bullet)} _{\X} (T)\in 
\smash{\underrightarrow{LD}} ^{\mathrm{b}} _{\Q,\mathrm{coh}} ( \smash{\widehat{\D}} _{\X} ^{(\bullet)})$. 

\item We define $\mathfrak{B} _\mathrm{cst}$ 
as follows: 
for any object $\W$ of $\mathrm{DVR}  (\V)$, 
for any  smooth formal scheme $\X$ over $\W$,  
the category $\mathfrak{B} _\mathrm{cst}(\X)$ is the full subcategory of 
$\smash{\underrightarrow{LD}} ^{\mathrm{b}} _{\Q,\mathrm{coh}} ( \smash{\widehat{\D}} _{\X} ^{(\bullet)})$
whose objects are of the form  
$\R \underline{\Gamma} ^\dag _{Y} \O _\X ^{(\bullet)} $, 
where $Y$ is a subvariety of the special fiber of $\X$
and the functor 
$\R \underline{\Gamma} ^\dag _{Y}$ is defined in 
\cite[3.2.1]{caro-2006-surcoh-surcv}
(to see that these objects are coherent,
we proceed as in the proof of \ref{S(D,C)stability3-pre}
this is a consequence of Corollary \cite[3.5.3]{caro-stab-sys-ind-surcoh}).

\end{enumerate}

\end{exs}

\begin{dfn}
\label{realizablefscheme}
Let $\W$ be an object of $\mathrm{DVR}  (\V)$  (see Definition \ref{DVR}). 
Let $f \colon \PP' \to \PP$ be a morphism of smooth formal $\W$-schemes. 
We say that $f$ is {\it realizable} if 
there exist an immersion 
$u \colon \PP' \hookrightarrow \PP''$ of smooth formal $\W$-schemes, 
a proper morphism 
$\pi \colon \PP'' \to \PP$ of smooth formal $\W$-schemes
such that
$f = \pi \circ u$.
When $\PP= \Spf \W$, we say that 
$\PP'$ is a realizable smooth formal $\W$-scheme.
We remark that any morphism of realizable smooth formal $\W$-schemes is a realizable morphism.

Because of the relative duality isomorphism of the form
\ref{rel-dual-isom},
which is not known in a more general case, 
we will need to focus on pushforwards by realizable morphisms. 
\end{dfn}

\begin{dfn}
\label{dfn-stable-data}
In order to be precise, let us fix some terminology.
Let $\mathfrak{C}$ and $\mathfrak{D}$ be two data of coefficients over $\V$. 
\begin{enumerate}
\item We will say that the data of coefficients of $\mathfrak{C}$ is
stable under pushforwards (resp. {\it realizable} pushforwards)
if for any object $\W$ of $\mathrm{DVR}  (\V)$, 
for any 
morphism (resp.  {\it realizable} morphism) $g \colon \X ' \to \X$ of  smooth formal schemes over $\W$, 
for any objet $\E ^{\prime (\bullet)}$ of $\mathfrak{C} (\X')$ with proper support over $X$ via $g$ 
(i.e., if $Z'$ is the support of $\E ^{\prime (\bullet)}$ then the composition $Z ' \hookrightarrow  X' \overset{g}{\to} X$ is proper),
the complex $g _{+} (\E ^{\prime (\bullet)})$ is an object of  $\mathfrak{C} (\X)$.

\item We will say that the data of coefficients of $\mathfrak{C}$ is stable under extraordinary pullbacks 
(resp. under {\it smooth} extraordinary pullbacks)
if for any object $\W$ of $\mathrm{DVR}  (\V)$, 
for any morphism (resp. {\it smooth} morphism)
$f \colon \Y \to \X$ of  smooth formal schemes over $\W$, 
for any objet $\E ^{(\bullet)}$ of $\mathfrak{C} (\X)$, 
we have $f ^{!} (\E ^{(\bullet)})\in \mathfrak{C} (\Y)$.

\item We still say that the data of coefficients of $\mathfrak{C}$ satisfies 
the first property (resp. the second property) of Berthelot-Kashiwara theorem
or satisfies $BK ^!$ (resp. $BK _+$) for short if the following property is satisfied:
for any object $\W$ of $\mathrm{DVR}  (\V)$, 
for any closed immersion  $u \colon \ZZ \hookrightarrow \X$ of  smooth formal schemes over $\W$, 
for any objet $\E ^{(\bullet)}$ of $\mathfrak{C} (\X)$ with support in $\ZZ$, 
we have $u ^{!} (\E ^{(\bullet)})\in \mathfrak{C} (\ZZ)$
(resp. 
for any objet $\G ^{(\bullet)}$ of $\mathfrak{C} (\ZZ)$, 
we have $u _{+} (\G ^{(\bullet)})\in \mathfrak{C} (\X)$).
Remark that $BK ^!$ and $BK _+$ hold if and only if the data of coefficients $\mathfrak{C}$ satisfies 
(an analogue of) Berthelot-Kashiwara theorem, which justifies the terminology. 

\item We will say that the data of coefficients $\mathfrak{C}$ is stable under base change if
for any morphism $\W \to \W' $ of $\mathrm{DVR}  (\V)$, 
for any  smooth formal scheme $\X$ over $\W$, 
for any objet $\E ^{(\bullet)}$ of $\mathfrak{C} (\X)$, 
we have $ \W'  \smash{\overset{\L}{\otimes}}   ^{\dag}
_{\W}  \E^{(\bullet) }\in \mathfrak{C} (\X \times _{\Spf \W} \Spf \W ')$
(see Notation \ref{comm-chg-base}).

\item We will say that the data of coefficients $\mathfrak{C}$ is stable under tensor products (resp. duals) if
for any object $\W$ of $\mathrm{DVR}  (\V)$, 
for any  smooth formal scheme $\X$ over $\W$, 
for any objects $\E ^{(\bullet)}$ and $\FF ^{(\bullet)}$ of $\mathfrak{C} (\X)$
we have 
$\FF ^{(\bullet)}
\smash{\overset{\L}{\otimes}}   ^{\dag}
_{\O  _{\X}} \E^{(\bullet) }
\in \mathfrak{C} (\X)$
(resp. 
$\DD  _{\X}(\E^{(\bullet) }) \in  \mathfrak{C} (\X)$) .

\item We will say that the data of coefficients $\mathfrak{C}$ is stable under local cohomological functors
(resp. under localizations outside a divisor), if 
for any object $\W$ of $\mathrm{DVR}  (\V)$, 
for any  smooth formal scheme $\X$ over $\W$, 
for any object $\E ^{(\bullet)}$ of $\mathfrak{C} (\X)$, 
for any subvariety $Y$ (resp. for any divisor $T$) of the special fiber of $\X$, 
we have
$\R \underline{\Gamma} ^\dag _{Y} \E ^{(\bullet)} \in \mathfrak{C} (\X)$ 
(resp. $(\hdag T) (\E ^{(\bullet)} )\in \mathfrak{C} (\X)$),
where we use the notation of \cite[3.2.1]{caro-2006-surcoh-surcv}).

\item We will say that the data of coefficients $\mathfrak{C}$ is stable under shifts if,
for any object $\W$ of $\mathrm{DVR}  (\V)$, 
for any  smooth formal scheme $\X$ over $\W$,
for any object  $\E ^{(\bullet)}$ of $\mathfrak{C} (\X)$, for any integer $n$, 
$\E ^{(\bullet)} [n]$ is an object of $\mathfrak{C} (\X)$.

\item We will say that the data of coefficients $\mathfrak{C}$ is stable by devissages if
 $\mathfrak{C}$ is stable by shifts and if
for any object $\W$ of $\mathrm{DVR}  (\V)$, 
for any  smooth formal scheme $\X$ over $\W$, 
for any exact triangle 
$\E ^{(\bullet)} _1
\to 
\E ^{(\bullet)} _2
\to 
\E ^{(\bullet)} _3
\to 
\E ^{(\bullet)} _1 [1]$
of $\smash{\underrightarrow{LD}} ^{\mathrm{b}} _{\Q,\mathrm{coh}} ( \smash{\widehat{\D}} _{\X} ^{(\bullet)})$, 
if two objects are in $\mathfrak{C} (\X)$, then so is the third one. 

\item We will say that the data of coefficients $\mathfrak{C}$ is stable under direct factors if,
for any object $\W$ of $\mathrm{DVR}  (\V)$, 
for any  smooth formal scheme $\X$ over $\W$ we have the following property: 
any direct factor in $\smash{\underrightarrow{LD}} ^{\mathrm{b}} _{\Q,\mathrm{coh}} ( \smash{\widehat{\D}} _{\X} ^{(\bullet)})$
of an object of $\mathfrak{C} (\X)$ is an object of
$\mathfrak{C} (\X)$.

\item We say that $\mathfrak{C}$ contains $\mathfrak{D}$ 
(or $\mathfrak{D}$ is contained in $\mathfrak{C}$) 
if for any object $\W$ of $\mathrm{DVR}  (\V)$, 
for any  smooth formal scheme $\X$ over $\W$
the category $\mathfrak{D} (\X)$ is a subcategory of $\mathfrak{C} (\X)$.

\item We say that the data of coefficients $\mathfrak{C}$ is local 
if for any object $\W$ of $\mathrm{DVR}  (\V)$, 
for any  smooth formal scheme $\X$ over $\W$, 
for any open covering $(\X _i) _{i\in I}$ of $\X$, 
for any object $\E ^{(\bullet)}$ of
$\smash{\underrightarrow{LD}} ^{\mathrm{b}} _{\Q,\mathrm{qc}} ( \smash{\widehat{\D}} _{\X} ^{(\bullet)})$, 
we have 
$\E ^{(\bullet)}\in \mathrm{Ob} \mathfrak{C} (\X)$ if and only if 
$\E ^{(\bullet)}| \X _i \in \mathrm{Ob} \mathfrak{C} (\X _i)$ for any $i \in I$. 
For instance, it follows from 
Theorem \cite[2.5.7]{caro-stab-sys-ind-surcoh} 
(see the localness in Definition \cite[2.4.1]{caro-stab-sys-ind-surcoh})
that 
the data of coefficients 
$\smash{\underrightarrow{LD}} ^{\mathrm{b}} _{\Q,\mathrm{coh}}$ is local. 
 
\end{enumerate}

\end{dfn}

We finish the subsection with some notation.

\begin{empt}
[Duality]
\label{ntn-dual}
Let $\mathfrak{C}$ be a data of coefficients. We define its dual data of coefficients 
$\mathfrak{C}  ^{\vee}$ as follows: 
for any object $\W$ of $\mathrm{DVR}  (\V)$, 
for any   smooth formal scheme $\X$ over $\W$, 
the category $\mathfrak{C}  ^{\vee} (\X)$ is the subcategory of 
$\smash{\underrightarrow{LD}} ^{\mathrm{b}} _{\Q,\mathrm{coh}} ( \smash{\widehat{\D}} _{\X} ^{(\bullet)})$
of objects $\E^{(\bullet) }$
such that $\DD _{\X} (\E^{(\bullet) }) \in \mathfrak{C} (\X)$.
\end{empt}

\subsection{Data of coefficients with potentially Frobenius structure over $\V$}

\begin{ntn}
\label{ntn-hypothFrob}
We assume that the absolute Frobenius homomorphism
$k \riso k$ sending $x $ to $x ^{p}$ lifts to an automorphism $\sigma \colon \V \riso \V$. 
We denote by $\mathrm{DVR}  (\V, \sigma)$ the full subcategory of $\mathrm{DVR}  (\V)$
whose objects $\alpha \colon \V \to \W$ are such that the absolute Frobenius homomorphism of the residue field of $\W$ 
has a lifting of the form $\sigma _\W\colon  \W \riso \W$ commuting with $\sigma$, i.e. 
such that 
$\sigma _\W \circ \alpha= \alpha \circ \sigma $.
\end{ntn}

Similarly to \ref{dfn-datacoef}, we introduce the following definition. 
\begin{dfn}
\label{dfn-datacoefFrob}
We keep the hypothesis of \ref{ntn-hypothFrob}.
A {\it data of coefficients with potentially Frobenius structure $\mathfrak{C}$ over $\V$}
will be the data, for any object
$\W$ of $\mathrm{DVR}  (\V,\sigma)$, 
for any  smooth formal scheme $\X$ over $\W$, 
of a full subcategory of 
$\smash{\underrightarrow{LD}} ^{\mathrm{b}} _{\Q,\mathrm{coh}} ( \smash{\widehat{\D}} _{\X} ^{(\bullet)})$,
which will be denoted by $\mathfrak{C} (\X)$.
If there is no ambiguity with $\V$, we simply say
a {\it data of coefficients with potentially Frobenius structure $\mathfrak{C}$}.

As in \ref{dfn-stable-data}, we define the notion of {\it local} data of coefficients with potentially Frobenius structure over $\V$, 
of its stability under shifts, devissages, direct factors, extraordinary pullbacks, 
pushforwards, base change (of course, we restrict here to morphism in $\mathrm{DVR}  (\V,\sigma)$),
tensor products, dual functors, local cohomological functors, localisation outside a divisor etc.  

\end{dfn}

\begin{rem}
We notice that by definition, a data of coefficients  over $\V$
induces by restriction a data of coefficients with potentially Frobenius structure $\mathfrak{C}$ over $\V$.

\end{rem}

\subsection{Overcoherence, overholonomicity (after any base change) and complements}

\begin{dfn}
\label{dfnS(D,C)}
Let $\mathfrak{C}$ and $\mathfrak{D}$ be two data of coefficients.

\begin{enumerate}
\item We denote by 
$S _0 (\mathfrak{D}, \mathfrak{C})$
the data of coefficients defined as follows: 
for any object $\W$ of $\mathrm{DVR}  (\V)$, 
for any  smooth formal scheme $\X$ over $\W$,  
the category 
$S _0 (\mathfrak{D}, \mathfrak{C}) (\X)$
is the full subcategory of 
$\smash{\underrightarrow{LD}} ^{\mathrm{b}} _{\Q,\mathrm{coh}} ( \smash{\widehat{\D}} _{\X} ^{(\bullet)})$
of objects  $\E ^{(\bullet)}$
satisfying the following properties :
\begin{enumerate}
\item [($\star$)] for any smooth morphism $f\colon \Y \to \X$ of  smooth formal $\W$-schemes, 
for any object 
$\FF ^{(\bullet)}
\in
\mathfrak{D} (\Y)$,
we have 
$\FF ^{(\bullet)}
\smash{\overset{\L}{\otimes}}   ^{\dag}
_{\O  _{\Y}} f ^{!} (\E^{(\bullet) })
\in
\mathfrak{C} (\Y)$.
\end{enumerate}

\item We denote by 
$S(\mathfrak{D}, \mathfrak{C})$ 
the data of coefficients defined as follows: 
for any object $\W$ of $\mathrm{DVR}  (\V)$, 
for any  smooth formal scheme $\X$ over $\W$,  
the category 
$S  (\mathfrak{D}, \mathfrak{C}) (\X)$ 
is the full subcategory of 
$\smash{\underrightarrow{LD}} ^{\mathrm{b}} _{\Q,\mathrm{coh}} ( \smash{\widehat{\D}} _{\X} ^{(\bullet)})$
of objects  $\E ^{(\bullet)}$
satisfying the following property: 
\begin{enumerate}
\item [($\star \star$)] for any morphism $\W \to \W '$ of $\mathrm{DVR}  (\V)$, 
we have 
$ \W'  \smash{\overset{\L}{\otimes}}   ^{\dag}_{\W}  \E^{(\bullet) }\in 
S _0 (\mathfrak{D}, \mathfrak{C}) (\X \times _{\Spf \W} \Spf \W ')$.
\end{enumerate}

\end{enumerate}

\end{dfn}

\begin{exs}
\label{ex-cst-surcoh}
\begin{enumerate}

\item
We get by definition the equality
$ \smash{\underrightarrow{LD}} ^{\mathrm{b}} _{\Q,\mathrm{ovcoh}}= 
S _0 (\mathfrak{B} _\mathrm{div}, \smash{\underrightarrow{LD}} ^{\mathrm{b}} _{\Q,\mathrm{coh}})$
(this notion of overcoherence is defined in
\cite[5.4]{caro-stab-sys-ind-surcoh}).
Moreover, $S (\mathfrak{B} _\mathrm{div}, \smash{\underrightarrow{LD}} ^{\mathrm{b}} _{\Q,\mathrm{coh}})$
corresponds to the notion of overcoherence after any base change as defined in 
\cite{surcoh-hol}.

\item \label{hstab} 
We put 
$\mathfrak{H} _0 :=S (\mathfrak{B} _\mathrm{div}, \smash{\underrightarrow{LD}} ^{\mathrm{b}} _{\Q,\mathrm{coh}})$ 
and 
by induction on $i \in \N$, 
we put $\mathfrak{H} _{i+1} :=
\mathfrak{H} _{i} \cap S(\mathfrak{B} _\mathrm{div}, \mathfrak{H} _{i} ^{\vee})$
(see Notation \ref{ntn-dual}).
The coefficients of 
$\mathfrak{H} _{i}$ are called 
{\it $i$-overholonomic after any base change}.
We get the data of coefficients 
$\smash{\underrightarrow{LD}} ^{\mathrm{b}} _{\Q,\mathrm{h}}:=  \cap _{i\in \N} \mathfrak{H} _{i}$
whose objects are called {\it overholonomic after any base change}.

\item \label{ovholstab}
Replacing $S $ by $S _0$ in the definition of $\smash{\underrightarrow{LD}} ^{\mathrm{b}} _{\Q,\mathrm{h}}$, 
we get a data of coefficients that we will denote by 
$\smash{\underrightarrow{LD}} ^{\mathrm{b}} _{\Q,\mathrm{ovhol}}$.

\end{enumerate}

\end{exs}

\begin{rem}
\label{rem-overhol}
\begin{enumerate}
\item Let $\mathfrak{C}$ be a data of coefficients.
The data of coefficients $\mathfrak{C}$ is stable under smooth extraordinary inverse image, localizations outside a divisor
(resp. under smooth extraordinary inverse image, localizations outside a divisor, and base change) if and only if 
$S _0 (\mathfrak{B} _\mathrm{div}, \mathfrak{C})=\mathfrak{C}$
(resp. $S  (\mathfrak{B} _\mathrm{div}, \mathfrak{C})=\mathfrak{C}$).

\item By construction, 
we remark that 
$\smash{\underrightarrow{LD}} ^{\mathrm{b}} _{\Q,\mathrm{ovhol}}$
is the biggest data of coefficients 
which contains 
$\mathfrak{B} _\mathrm{div}$, 
is stable by devissage, 
dual functors
and the operation
$S _{0} (\mathfrak{B} _\mathrm{div}, -)$. 
Moreover, 
$\smash{\underrightarrow{LD}} ^{\mathrm{b}} _{\Q,\mathrm{h}}$
is the biggest data of coefficients 
which contains 
$\mathfrak{B} _\mathrm{div}$, 
is stable by devissage, 
dual functors
and the operation
$S  (\mathfrak{B} _\mathrm{div}, -)$. 

 \item Let $\W$ be an object  of $\mathrm{DVR}  (\V)$, 
$\X$ be a   smooth formal $\W$-scheme.
We denote by 
$D ^{\mathrm{b}}  _{\mathrm{ovcoh}} (\smash{\D} ^\dag _{\X\Q})$
(resp. $D ^{\mathrm{b}}  _{\mathrm{ovhol}} (\smash{\D} ^\dag _{\X\Q})$, 
resp. $D ^{\mathrm{b}}  _{\mathrm{h}} (\smash{\D} ^\dag _{\X\Q})$) the category of 
overcoherent complexes
(resp. overholonomic complexes,
resp. overholonomic complexes after any base change)
of $\smash{\D} ^\dag _{\X\Q}$-modules 
as defined in \cite[3.2.1]{surcoh-hol}
(resp. \cite[3]{caro_surholonome},
resp. \cite[3.2.1]{surcoh-hol}).
We recall that, from the proposition \cite[5.4.3]{caro-stab-sys-ind-surcoh}, we have 
as in \ref{limeqcat} the following  equivalence of categories
\ref{limeqcat}
\begin{equation}
\label{limeqcatovcoh}
\underrightarrow{\lim}
\colon 
\smash{\underrightarrow{LD}} ^{\mathrm{b}} _{\Q,\mathrm{ovcoh}} ( \smash{\widehat{\D}} _{\X} ^{(\bullet)})
\cong 
D ^{\mathrm{b}}  _{\mathrm{ovcoh}} (\smash{\D} ^\dag _{\X\Q}).
\end{equation}
Hence, as in \ref{t-structure-coh}, we get from \ref{limeqcatovcoh} a canonical t-structure on 
$\smash{\underrightarrow{LD}} ^{\mathrm{b}} _{\Q,\mathrm{ovcoh}} ( \smash{\widehat{\D}} _{\X} ^{(\bullet)})$.
Moreover, 
we check that the functor 
$\underrightarrow{\lim}$
of 
\ref{limeqcatovcoh}
induces an equivalence between 
$\smash{\underrightarrow{LD}} ^{\mathrm{b}} _{\Q,\mathrm{ovhol}} ( \smash{\widehat{\D}} _{\X} ^{(\bullet)})$
(resp. $\smash{\underrightarrow{LD}} ^{\mathrm{b}} _{\Q,\mathrm{h}} ( \smash{\widehat{\D}} _{\X} ^{(\bullet)})$)
and 
$D ^{\mathrm{b}}  _{\mathrm{ovhol}} (\smash{\D} ^\dag _{\X\Q})$
(resp. $D ^{\mathrm{b}}  _{\mathrm{h}} (\smash{\D} ^\dag _{\X\Q})$).
Finally, recall from \cite[3.4.2]{surcoh-hol} that 
$D ^{\mathrm{b}}  _{\mathrm{ovhol}} (\smash{\D} ^\dag _{\X\Q})$ 
(resp. $D ^{\mathrm{b}}  _{\mathrm{h}} (\smash{\D} ^\dag _{\X\Q})$) has a canonical t-structure induced by 
that of $D ^{\mathrm{b}}  _{\mathrm{coh}} (\smash{\D} ^\dag _{\X\Q})$, i.e.
a coherent complex is overholonomic (after any base change) if and only so are if its cohomological spaces.
Hence, we get a canonical t-structure on 
$\smash{\underrightarrow{LD}} ^{\mathrm{b}} _{\Q,\mathrm{ovhol}} ( \smash{\widehat{\D}} _{\X} ^{(\bullet)})$
and $\smash{\underrightarrow{LD}} ^{\mathrm{b}} _{\Q,\mathrm{h}} ( \smash{\widehat{\D}} _{\X} ^{(\bullet)})$.

\end{enumerate}
\end{rem}

\begin{empt}
\label{ovcoh-invim}
We already know that the data of coefficients
$ \smash{\underrightarrow{LD}} ^{\mathrm{b}} _{\Q,\mathrm{ovcoh}}$
is stable under direct factors and 
extraordinary pullbacks (see \cite[5.4.5]{caro-stab-sys-ind-surcoh}, or remark that this 
is a consequence of Lemmas \ref{lem-stabextpullback},
\ref{S(D,C)stability3-pre} and \ref{S(D,C)stability}.4--5). 
Concerning the stability under pushforwards, 
this is the purpose of Proposition \ref{stab-propersupp}
(in the litterature, we only knew the stability under pushforwards by a proper morphism :
see \cite[5.4.8]{caro-stab-sys-ind-surcoh}).
First, we need to recall some properties and notations concerning the devissability in overcoherent isocrystals.
\end{empt}

\begin{empt}
[Isocrystals and notation]
\label{Isodagdagdiv}
Let $\W$ be an object of $\mathrm{DVR}  (\V)$  (see Definition \ref{DVR}). 
Let $\PP$ be a smooth formal scheme over $\W$, 
$X$ be a closed subscheme of $P$ and $T$ be a divisor of $P$ such that $Y:=X \setminus T$ is smooth (over the residue field of $\W$).
We denote by 
$\textrm{Isoc} ^{\dag \dag}(\PP, T, X/\W)$ the category of partially overcoherent isocrystals on 
$(\PP, T, X/\W)$
(see \cite[1.4.2]{caro-stab-prod-tens}, but we replaced in the notation $\W$ by its field of fraction).
This is a full subcategory of that of 
overcoherent $\smash{\D} ^\dag _{\PP} (\hdag T) _{\Q}$-modules with support in $X$.
On the other hand, 
we denote by 
$\mathrm{Isoc} ^{\dag}(\PP, T, X/\W)$, Berthelot's category of overconvergent isocrystals on $(\PP, T, X/\W)$
(i.e. $\mathrm{Isoc} ^{\dag}(\PP, T, X/\W):=\mathrm{Isoc} ^{\dag}(Y, X/\W)$, more precisely their objects are the realization over $\PP$ of
overconvergent isocrystals on  $(Y, X/\W)$).
From \cite[5.4.6.1]{caro-pleine-fidelite}, we have the equivalence of categories of the form
\begin{equation}
\label{eqcat-isocpre}
\sp _{+}
\colon
\mathrm{Isoc} ^{\dag}(\PP, T, X/\W)
\cong 
\mathrm{Isoc} ^{\dag \dag}(\PP, T, X/\W),
\end{equation}
which explains the terminology of the right hand side.
We also denote by
$D^{\mathrm{b}}_{\mathrm{isoc}}(\PP, T, X/\W)$
the full subcategory of
$D ^{\mathrm{b}}  _{\mathrm{ovcoh}} (\smash{\D} ^\dag _{\PP} (\hdag T) _{\Q})$
whose cohomological spaces are objects of 
$\mathrm{Isoc} ^{\dag \dag}(\PP, T, X/\W)$.

Following \cite[1.4.3]{caro-stab-prod-tens}, we denote by 
$\mathrm{Isoc} ^{(\bullet)} (\PP, T, X/\W)$
the full subcategory of 
$\smash{\underrightarrow{LM}}  _{\Q,\mathrm{coh}} ( \smash{\widehat{\D}} _{\PP} ^{(\bullet)} (T))$
(see Notation \cite[2.2.4]{caro-stab-sys-ind-surcoh})
of objects
$\E ^{(\bullet)}$ such that
$\underrightarrow{\lim}~
\E ^{ (\bullet)}
\in 
\mathrm{Isoc} ^{\dag \dag}(\PP, T, X/\W)$.
Following \cite[4.1.4]{caro-stab-prod-tens}, we denote by 
$\smash{\underrightarrow{LD}} ^{\mathrm{b}} _{\Q,\mathrm{isoc},X} ( \smash{\widehat{\D}} _{\PP} ^{(\bullet)} (T))$
the full subcategory of 
$\smash{\underrightarrow{LD}} ^{\mathrm{b}} _{\Q,\mathrm{coh}} ( \smash{\widehat{\D}} _{\PP} ^{(\bullet)} (T))$
of objects
$\E ^{(\bullet)}$ such that
$\mathcal{H} ^j (\E ^{ (\bullet)}) \in  \mathrm{Isoc} ^{(\bullet)} (\PP, T, X/\W)$
for any integer $j\in \Z$.
Following \cite[4.1.5]{caro-stab-prod-tens},
we have the equivalence of categories
 $\underrightarrow{\lim}
 \colon 
 \smash{\underrightarrow{LD}}  ^{\mathrm{b}}  _{\Q,\mathrm{isoc},X} ( \smash{\widehat{\D}} _{\PP} ^{(\bullet)} (T))
 \cong
D ^{\mathrm{b}} _{\mathrm{isoc}}(\PP, T, X/\W)$.
Following \cite[4.1.4]{caro-stab-prod-tens}, the category
$ \smash{\underrightarrow{LD}} ^{\mathrm{b}} _{\Q,\mathrm{isoc},X} ( \smash{\widehat{\D}} _{\PP} ^{(\bullet)} (T))$ 
does not depend on the choice of the closed scheme $X$ of $P$ and
on the divisor $T$ of $P$ such that 
$Y = X \setminus T$. Hence, 
the category 
$ \smash{\underrightarrow{LD}} ^{\mathrm{b}} _{\Q,\mathrm{isoc},X} ( \smash{\widehat{\D}} _{\PP} ^{(\bullet)} (T))$
will simply be denoted  by 
$ \smash{\underrightarrow{LD}} ^{\mathrm{b}} _{\Q,\mathrm{isoc},Y} ( \smash{\widehat{\D}} _{\PP} ^{(\bullet)})$.

\end{empt}

\begin{empt}
[Devissage in overcoherent isocrystals]
\label{ovcoharedev}
Let $\W$ be an object of $\mathrm{DVR}  (\V)$.
Let $\PP$ be a smooth formal $\W$-scheme. 
Let $\E ^{ (\bullet)} \in \smash{\underrightarrow{LD}} ^{\mathrm{b}} _{\Q,\mathrm{ovcoh}} ( \smash{\widehat{\D}} _{\PP} ^{(\bullet)})$.
Let $X$ be the support of $\E ^{(\bullet)}$. 
There exists a smooth $d$-stratification 
$(Y _{i}) _{i=1, \dots , r}$ of $X$ in $P$ (see Definition \cite[4.1.2.2]{caro-stab-prod-tens}) 
such that we have
$ \R \underline{\Gamma} ^\dag _{Y _i}  (\E ^{ (\bullet)}) 
\in 
\underrightarrow{LD} ^{\mathrm{b}} _{\Q,  \textrm{isoc}, Y _i} ( \smash{\widehat{\D}} _{\PP} ^{(\bullet)})$,
for any for any $i =1,\dots, r$ (see Theorem \cite[6.2.3]{caro-pleine-fidelite} and Definition \cite[6.2.2]{caro-pleine-fidelite}).
\end{empt}

\begin{prop}
\label{stab-propersupp}
Let $\W$ be an object of $\mathrm{DVR}  (\V)$.
Let $g \colon \X '\to \X$ be a morphism of smooth formal $\W$-schemes.
For any 
$\E ^{\prime (\bullet)} \in \smash{\underrightarrow{LD}} ^{\mathrm{b}} _{\Q,\mathrm{ovcoh}} ( \smash{\widehat{\D}} _{\X '} ^{(\bullet)})$
with proper support over $X$, 
 the object 
$g _{+} (\E ^{\prime (\bullet)} ) $
belongs to 
$\smash{\underrightarrow{LD}} ^{\mathrm{b}} _{\Q,\mathrm{ovcoh}} ( \smash{\widehat{\D}} _{\X} ^{(\bullet)})$.
\end{prop}

\begin{proof}
This is an analogue of Theorem \cite[2.3.2]{caro-image-directe}
 (see the remark \ref{rem-LD-fromD} below
which explain why this is not a straightforward consequence of Theorem \cite[2.3.2]{caro-image-directe})
and its proof can be adapted. For the comfort of the reader, a complete detailed proof is given as follows: 
Let $Z'$ be the support of $\E ^{\prime (\bullet)}$. 
Following \ref{ovcoharedev}, 
since $\E ^{\prime (\bullet)}$ is overcoherent, 
there exists a smooth $d$-stratification 
$(U '_{i}) _{i=1, \dots , r}$ of $Z'$ in $X'$
such that 
$ \R \underline{\Gamma} ^\dag _{U '_i}  (\E ^{\prime (\bullet)}) 
\in 
\underrightarrow{LD} ^{\mathrm{b}} _{\Q,  \textrm{isoc},U '_i} ( \smash{\widehat{\D}} _{\X '} ^{(\bullet)})$,
for any for any $i =1,\dots, r$.
Since 
$\smash{\underrightarrow{LD}} ^{\mathrm{b}} _{\Q,\mathrm{ovcoh}} ( \smash{\widehat{\D}} _{\X'} ^{(\bullet)})$
is a triangle subcategory of 
$\smash{\underrightarrow{LD}} ^{\mathrm{b}} _{\Q,\mathrm{qc}} ( \smash{\widehat{\D}} _{\X'} ^{(\bullet)})$,
we reduce by devissage to check that 
$ g _{+} \R \underline{\Gamma} ^\dag _{U '_i}  (\E ^{\prime (\bullet)})
\in \smash{\underrightarrow{LD}} ^{\mathrm{b}} _{\Q,\mathrm{ovcoh}} ( \smash{\widehat{\D}} _{\X} ^{(\bullet)})$. 
We can suppose $U' _i$ integral. 
Again by devissage, 
we reduce to check 
$ g _{+} \left (\mathcal{H} ^j (\R \underline{\Gamma} ^\dag _{U '_i}  (\E ^{\prime (\bullet)})) \right )
\in \smash{\underrightarrow{LD}} ^{\mathrm{b}} _{\Q,\mathrm{ovcoh}} ( \smash{\widehat{\D}} _{\X} ^{(\bullet)})$
for any integer $j\in \Z$. 
We have 
$\E ^{\prime (\bullet)} _{i,j}:=\mathcal{H} ^j (\R \underline{\Gamma} ^\dag _{U '_i}  (\E ^{\prime (\bullet)}))
\in  \mathrm{Isoc} ^{(\bullet)} (\X', T '_i, Z '_i /\W)$, where $Z ' _i$ is the closure of $U '_i$ in $ X' _i$ and 
$T ' _i$ is some divisor of $X ' _i$.

Following \cite[5.3.1.1]{caro-pleine-fidelite}, 
there exists a commutative diagram of the form 
  \begin{equation}
  \label{diag-casliss631dev}
  \xymatrix {
  {Z'' _i} \ar[r] ^-{u''} \ar[d] _{a '} & {\P ^N _{X'}} \ar[r] \ar[d] &
  {\widehat{\P} ^N _{\X'}} \ar[r] ^{\widehat{\P} ^N _g} \ar[d] ^-{q'} & {\widehat{\P} ^N _{\X}} \ar[d]^-{q} \\
  {Z _i '} \ar[r] ^-{u'} & {X'} \ar[r] & {\X'} \ar[r] ^-g & {\X,}
   }
\end{equation}
where  $Z'' _i$ is smooth over the residue field of $\W$, $q$ et $q'$ are the canonical projections, 
$u''$ is a closed immersion, $T '' _i := a  ^{\prime -1} (T_i '\cap Z' _i)$ is a strict normal crossing divisor of  $Z'' _i$,
  $a '$ is proper, surjective, generically finite and etale. 
  Put $\E ^{\prime \prime (\bullet)} _{i,j} := a ^{\prime !} (\E ^{\prime (\bullet)} _{i,j})
:= 
\R \underline{\Gamma} ^\dag _{Z '' _i} 
q ^{\prime !} (\E ^{\prime (\bullet)} _{i,j}) 
  \in  
  \mathrm{Isoc} ^{(\bullet)} (\widehat{\P} ^N _{\X'}, T ''_i, Z''_i /\W) 
  \cap 
  \smash{\underrightarrow{LM}} _{\Q,\mathrm{ovcoh}} ( \smash{\widehat{\D}} _{\widehat{\P} ^N _{\X '}} ^{(\bullet)})$.
By copying word by word the proof of  \cite[5.3.1.1]{caro-pleine-fidelite}, 
we check that 
$\E ^{\prime (\bullet)} _{i,j}$ is a direct factor of  $q ' _+ ( \E ^{\prime \prime (\bullet)} _{i,j})$.
By construction (see the beginning of the proof of \cite[5.3.1.1]{caro-pleine-fidelite}), 
the morphism
$Z'' _i \to \widehat{\P} ^N _{\X}$ is an immersion 
(indeed, this is the composition of the graph of 
$Z'' _i \to \X$ with the immersion 
$Z'' _i \times \X
\hookrightarrow  
\widehat{\P} ^N _{\X}$
induced by an immersion of the form
$Z'' _i 
\hookrightarrow  
\widehat{\P} ^N _{\W}$).
Since 
$Z'' _i$ is proper over $X$ then 
$Z'' _i \to \widehat{\P} ^N _{\X}$
is more precisely a closed immersion. 
Since $\E ^{\prime \prime (\bullet)} _{i,j}
\in 
  \smash{\underrightarrow{LM}} _{\Q,\mathrm{ovcoh}} ( \smash{\widehat{\D}} _{\widehat{\P} ^N _{\X '}} ^{(\bullet)})$
  has in support in $Z'' _i$ which is smooth, 
  since overcoherence is a local notion, 
we check similarly to \cite[5.1.4]{caro-pleine-fidelite}
that 
$(\widehat{\P} ^N _g) _+
(\E ^{\prime \prime (\bullet)} _{i,j})
\in 
  \smash{\underrightarrow{LM}} _{\Q,\mathrm{ovcoh}} ( \smash{\widehat{\D}} _{\widehat{\P} ^N _{\X}} ^{(\bullet)})$.
Since $q$ is proper, then 
$q _+ $ preserves the overcoherence 
and then 
$g _+ q '_+ ( \E ^{\prime \prime (\bullet)} _{i,j})
\riso 
q _+ \circ  (\widehat{\P} ^N _g) _+
(\E ^{\prime \prime (\bullet)} _{i,j})
\in
  \smash{\underrightarrow{LD}} ^\mathrm{b} _{\Q,\mathrm{ovcoh}} ( \smash{\widehat{\D}} _{\X} ^{(\bullet)})$.
Since 
$\E ^{\prime (\bullet)} _{i,j}$ is a direct factor of  $q ' _+ ( \E ^{\prime \prime (\bullet)} _{i,j})$,
then 
$g _+ (\E ^{\prime (\bullet)} _{i,j})$ is a direct factor of the overcoherent complex $g _+  q ' _+ ( \E ^{\prime \prime (\bullet)} _{i,j})$.
Hence, we are done.

\end{proof}

\begin{rem}
\label{rem-LD-fromD}
With the notation of \ref{stab-propersupp}, 
for any complex
$\E ^{ (\bullet)} \in \smash{\underrightarrow{LD}} ^{\mathrm{b}} _{\Q,\mathrm{coh}} ( \smash{\widehat{\D}} _{\X } ^{(\bullet)})$
such that 
$\underrightarrow{\lim} (  \E ^{ (\bullet)} ) \in D ^{\mathrm{b}}  _{\mathrm{ovcoh}} (\smash{\D} ^\dag _{\X\Q})$
we have 
$\E ^{ (\bullet)} \in \smash{\underrightarrow{LD}} ^{\mathrm{b}} _{\Q,\mathrm{ovcoh}} ( \smash{\widehat{\D}} _{\X } ^{(\bullet)})$
(with notation \ref{limeqcat}, this is Proposition \cite[5.4.3]{caro-stab-sys-ind-surcoh}).
Hence, when $g$ is proper,
since $g _{+} (\E ^{\prime (\bullet)} ) $
belongs to 
$\smash{\underrightarrow{LD}} ^{\mathrm{b}} _{\Q,\mathrm{coh}} ( \smash{\widehat{\D}} _{\X} ^{(\bullet)})$, 
it follows that
Proposition \ref{stab-propersupp} is a straightforward consequence of 
Theorem \cite[2.3.2]{caro-image-directe}. 

But, when $g$ is not proper, this is not a clear consequence.
Indeed, since $g$ is not proper, 
we only know without effort that 
$g _{+} (\E ^{\prime (\bullet)} ) $
belongs to 
$\smash{\underrightarrow{LD}} ^{\mathrm{b}} _{\Q,\mathrm{qc}} ( \smash{\widehat{\D}} _{\X} ^{(\bullet)})$
(the check of the coherence seems as hard as the check of the overcoherence). 
Without coherence hypothesis, we still 
 have the functor
$\underrightarrow{\lim}
\colon 
\smash{\underrightarrow{LD}} ^{\mathrm{b}} _{\Q,\mathrm{qc}} ( \smash{\widehat{\D}} _{\X} ^{(\bullet)})
\to 
D ^{\mathrm{b}}   (\smash{\D} ^\dag _{\X\Q})$.
But, for any complex
$\E ^{ (\bullet)} \in \smash{\underrightarrow{LD}} ^{\mathrm{b}} _{\Q,\mathrm{qc}} ( \smash{\widehat{\D}} _{\X } ^{(\bullet)})$
such that 
$\underrightarrow{\lim} (  \E ^{ (\bullet)} ) \in D ^{\mathrm{b}}  _{\mathrm{coh}} (\smash{\D} ^\dag _{\X\Q})$
(resp. 
$\underrightarrow{\lim} (  \E ^{ (\bullet)} ) \in D ^{\mathrm{b}}  _{\mathrm{ovcoh}} (\smash{\D} ^\dag _{\X\Q})$),
it seems false that this implies that 
$\E ^{ (\bullet)} \in \smash{\underrightarrow{LD}} ^{\mathrm{b}} _{\Q,\mathrm{coh}} ( \smash{\widehat{\D}} _{\X } ^{(\bullet)})$
(resp. $\E ^{ (\bullet)} \in \smash{\underrightarrow{LD}} ^{\mathrm{b}} _{\Q,\mathrm{ovcoh}} ( \smash{\widehat{\D}} _{\X } ^{(\bullet)})$).
\end{rem}

We will need later the following base change isomorphism.

\begin{prop}
Let $\W$ be an object of $\mathrm{DVR}  (\V)$  (see Definition \ref{DVR}). 
Let $f \colon \Y \to \X$, $g \colon \X '\to \X$ be two morphisms of smooth formal $\W$-schemes.
We suppose $f$ smooth.
Let  
$f '\colon \Y \times _{\X} \X '\to \X'$  and 
$g '\colon \Y \times _{\X} \X '\to \Y$ be the structural projections. 
For any 
$\E ^{\prime (\bullet)} \in \smash{\underrightarrow{LD}} ^{\mathrm{b}} _{\Q,\mathrm{ovcoh}} ( \smash{\widehat{\D}} _{\X '} ^{(\bullet)})$
with proper support over $X$, 
we have the base change isomorphism in 
$\smash{\underrightarrow{LD}} ^{\mathrm{b}} _{\Q,\mathrm{ovcoh}} ( \smash{\widehat{\D}} _{\Y} ^{(\bullet)})$ of the form
\begin{equation}
\label{basechange}
 f ^{!} g _{+} (\E ^{\prime (\bullet)} ) 
\riso 
g '_{+}  f ^{\prime !} (\E ^{\prime (\bullet)} ) .
\end{equation}
\end{prop}

\begin{proof}
This is analogue to the proof \cite[5.4.6]{caro-stab-sys-ind-surcoh}:
let $\E ^{\prime (\bullet)} \in \smash{\underrightarrow{LD}} ^{\mathrm{b}} _{\Q,\mathrm{ovcoh}} ( \smash{\widehat{\D}} _{\X '} ^{(\bullet)})$
with proper support over $X$.
First, we remark that by using \ref{stab-propersupp},
both objects of \ref{basechange}
belongs to 
$\smash{\underrightarrow{LD}} ^{\mathrm{b}} _{\Q,\mathrm{ovcoh}} ( \smash{\widehat{\D}} _{\Y} ^{(\bullet)})$.
The morphism $f$ is the composition 
of its graph $\gamma \colon \Y \hookrightarrow \X \times \Y$ with the projection 
$\pi \colon \X \times \Y \to \X$.
Let
$g'' \colon \X '\times \Y \to \X \times \Y $, 
and $\pi '\colon \X' \times \Y '\to \X'$
be the canonical projections. 
Let 
$\gamma '\colon \X '\times _\X \Y \hookrightarrow \X '\times \Y$
be the closed immersion induced by base change
via $g''$ of $\gamma$.
In the second part of the proof of 
\cite[5.4.6]{caro-stab-sys-ind-surcoh}, 
we have proved the isomorphism 
$\pi ^! g _+ (\E ^{\prime (\bullet)}) 
\riso 
g _+ ''\pi ^{\prime !} (\E ^{\prime (\bullet)})$.
This yields the second isomorphism
$\gamma _+ 
f ^! g _+ (\E ^{\prime (\bullet)}) 
\riso 
\gamma _+ \gamma ^!
\pi ^! g _+ (\E ^{\prime (\bullet)}) 
\riso 
\gamma _+ \gamma ^!
g _+ ''\pi ^{\prime !} (\E ^{\prime (\bullet)})$.
Using
Theorem \cite[5.2.8.2]{caro-stab-sys-ind-surcoh}
and Corollary 
\cite[5.3.8]{caro-stab-sys-ind-surcoh}, 
we get the first isomorphism
$\gamma _+ \gamma ^!
g _+ ''\pi ^{\prime !} (\E ^{\prime (\bullet)})
\riso 
g '' _+ \gamma ' _+ \gamma ^{\prime !} \pi ^{\prime !} (\E ^{\prime (\bullet)})
\riso
\gamma  _+  g ' _+ f ^{\prime !} (\E ^{\prime (\bullet)})$.
Hence, by composition, we get the isomorphism
$\gamma _+ 
f ^! g _+ (\E ^{\prime (\bullet)}) 
\riso
\gamma  _+  g ' _+ f ^{\prime !} (\E ^{\prime (\bullet)})$.
Since 
$f ^! g _+ (\E ^{\prime (\bullet)}) $ 
and 
$g ' _+ f ^{\prime !} (\E ^{\prime (\bullet)})$ are coherent (this is a consequence of 
Theorem \ref{stab-propersupp}), 
then we can use Berthelot-Kashiwara theorem in the form \cite[5.3.7.1]{caro-stab-sys-ind-surcoh}.
In other words, 
by applying 
$\gamma ^!$ to the isomorphism
$\gamma _+ 
f ^! g _+ (\E ^{\prime (\bullet)}) 
\riso
\gamma  _+  g ' _+ f ^{\prime !} (\E ^{\prime (\bullet)})$
we get the isomorphism \ref{basechange}.

\end{proof}

\begin{ntn}
\label{ntn-t-structureovcoh}
Let $\W$ be an object of $\mathrm{DVR}  (\V)$. 
Let $\PP$ be a smooth formal $\W$-scheme,
$Y$ be a subscheme of $P$.
We denote by 
$\smash{\underrightarrow{LD}} ^{\mathrm{b}} _{\Q,\mathrm{ovcoh}} (Y , \PP /\W)$
the full subcategory of 
$\smash{\underrightarrow{LD}} ^{\mathrm{b}} _{\Q,\mathrm{ovcoh}} ( \smash{\widehat{\D}} _{\PP } ^{(\bullet)})$
of complexes
$\E ^{(\bullet)}$
such that there exists an isomorphism  of the form
$\R \underline{\Gamma} ^{\dag} _{Y} \E ^{ (\bullet)}
\riso 
\E ^{(\bullet)}$. 

Similarly to \cite[1.2.1-5]{Abe-Caro-weights} , 
there is a canonical t-structure on 
$\smash{\underrightarrow{LD}} ^{\mathrm{b}} _{\Q,\mathrm{ovcoh}} (Y , \PP /\W)$
defined as follows: 
choose $\U$ an open set of $\PP$ such that 
$Y$ is closed in $\U$.
We denote by $\smash{\underrightarrow{LD}} ^{\leq n} _{\Q,\mathrm{ovcoh}}  (Y,\PP/\W)$
and 
$\smash{\underrightarrow{LD}} ^{\geq n} _{\Q,\mathrm{ovcoh}}  (Y,\PP/\W)$
is the full subcategory of 
$\smash{\underrightarrow{LD}} ^{\mathrm{b}} _{\Q,\mathrm{ovcoh}} (Y , \PP /\W)$
of complexes 
$\E$ such that
$\E |\U 
\in 
\smash{\underrightarrow{LD}} ^{\leq n} _{\Q,\mathrm{ovcoh}} (\smash{\D} ^\dag _{\U,\Q})$
(resp. $\E |\U 
\in 
\smash{\underrightarrow{LD}} ^{\leq n} _{\Q,\mathrm{ovcoh}} (\smash{\D} ^\dag _{\U,\Q})$),
where the t-structure on 
$\smash{\underrightarrow{LD}} ^{\leq n} _{\Q,\mathrm{ovcoh}} (\smash{\D} ^\dag _{\U,\Q})$
is the canonical one (see \ref{rem-overhol}.3).
The heart of this t-structure, the category of overcoherent modules on $(Y,\PP/\W)$, 
will be denoted by
$\smash{\underrightarrow{LM}}  _{\Q,\mathrm{ovcoh}}   (Y,\PP/\W)$.
Finally, we denote by 
$\mathcal{H} ^i _{\mathrm{t}}$
the $i$th space of cohomology with respect to this canonical t-structure. 

\end{ntn}

\begin{thm}
[Independence]
\label{ind-ovcoh}
Let $\W$ be an object of $\mathrm{DVR}  (\V)$. 
Let $f \colon \PP '\to \PP$ be a realizable morphism of smooth formal $\W$-schemes.
Let $X'$ be a closed subscheme of $P'$, 
$X$ be a closed subscheme of $P$, 
such that $f (X') \subset X$ and 
the induced morphism $X' \to X$ of schemes is proper. 
Let $Y$ be an open subscheme of $X'$ such that 
the composition 
$Y \to X' \to X$ is an open immersion. 

\begin{enumerate}
\item For any 
$\E ^{(\bullet)} \in \smash{\underrightarrow{LM}}  _{\Q,\mathrm{ovcoh}} (Y , \PP /\W)$,
for any 
$\E ^{\prime(\bullet)} \in \smash{\underrightarrow{LM}}  _{\Q,\mathrm{ovcoh}} (Y , \PP '/\W)$, 
for any $n \in \Z \setminus \{ 0\}$, we have
$$\mathcal{H} _t ^n \R \underline{\Gamma} ^{\dag} _{Y} f ^! ( \E ^{(\bullet)}) =0,
\hspace{1 cm}
\mathcal{H} ^n _t f _+ ( \E ^{\prime (\bullet)}) =0.$$

\item 
For any $\E ^{(\bullet)} \in \smash{\underrightarrow{LD}} ^{\mathrm{b}} _{\Q,\mathrm{ovcoh}} (Y , \PP /\W)$, 
for any $\E ^{\prime (\bullet)} \in \smash{\underrightarrow{LD}} ^{\mathrm{b}} _{\Q,\mathrm{ovcoh}} (Y , \PP '/\W)$, 
we have {\it canonical} isomorphisms of the form
$\R \underline{\Gamma} ^{\dag} _{Y} f ^!  f _+ (\E ^{\prime (\bullet)})
\riso \E ^{\prime (\bullet)}$ and 
$f _+  \R \underline{\Gamma} ^{\dag} _{Y} f ^!  
(\E ^{(\bullet)} )
\riso 
\E ^{(\bullet)} $.
In particular, the functors
$\R \underline{\Gamma} ^{\dag} _{Y} f ^!$
and 
$f _+$ induce quasi-inverse equivalences of categories between 
$\smash{\underrightarrow{LD}} ^{\mathrm{b}} _{\Q,\mathrm{ovcoh}} (Y , \PP /\W)$
and 
$\smash{\underrightarrow{LD}} ^{\mathrm{b}} _{\Q,\mathrm{ovcoh}} (Y , \PP '/\W)$.
\end{enumerate}

\end{thm}

\begin{proof}
With the first part of the Remark \ref{rem-LD-fromD}, 
the first statement is a consequence of 
\cite[4.2.3.2]{caro-image-directe}.
Let us check the second one.

First, we reduce to the case where $f$ is proper. 
Let $u \colon \PP' \hookrightarrow \PP''$ be an immersion of smooth formal $\W$-schemes, 
$\pi \colon \PP'' \to \PP$ 
be a proper morphism of smooth formal $\W$-schemes
such that
$f = \pi \circ u$.
Let $v \colon \PP' \hookrightarrow \U'' $ 
be a closed immersion 
and $j\colon \U'' \hookrightarrow \PP''$ be an open immersion
such that $u = j \circ v$.
Since $X' \to P$ is proper, 
then 
$X'\to U''$ 
and
$X'\to P''$
are closed immersions (because they are proper immersions). 
Since the objects of 
$\smash{\underrightarrow{LD}} ^{\mathrm{b}} _{\Q,\mathrm{ovcoh}} (Y , \PP ''/\W)$
and 
of 
$\smash{\underrightarrow{LD}} ^{\mathrm{b}} _{\Q,\mathrm{ovcoh}} (Y , \U ''/\W)$
have their support in $X'$, 
since the functors $j _*$ and $j ^*$ are quasi-inverse equivalence of categories 
between complexes over $\U''$ (resp. $\PP''$) with support in $X'$, 
since $j _+ = j _*$ and $j ^! = j ^*$ preserve overcoherence (use \ref{stab-propersupp} for $j _+$), then 
$j _+$ and $j ^!$ induces equivalence of categories between 
$\smash{\underrightarrow{LD}} ^{\mathrm{b}} _{\Q,\mathrm{ovcoh}} (Y , \PP ''/\W)$
and 
$\smash{\underrightarrow{LD}} ^{\mathrm{b}} _{\Q,\mathrm{ovcoh}} (Y , \U ''/\W)$
(remark that 
$\R \underline{\Gamma} ^{\dag} _{Y} j ^! = j ^!$
over $\smash{\underrightarrow{LD}} ^{\mathrm{b}} _{\Q,\mathrm{ovcoh}} (Y , \PP ''/\W)$).
Hence, we reduce to the case where $f$ is proper. 

Finally, when $f$ is proper, 
with the first part of the Remark \ref{rem-LD-fromD}, 
 the second statement is 
a consequence of \cite[4.2.3.4]{caro-image-directe}.
\end{proof}

\begin{thm}
[Relative duality isomorphism]
\label{rel-dual-isom}
Let $\W$ be an object of $\mathrm{DVR}  (\V)$. 
Let $g \colon \PP '\to \PP$ be a realizable morphism of smooth formal $\W$-schemes.
For any 
$\E ^{\prime (\bullet)} 
\in 
\smash{\underrightarrow{LD}} ^{\mathrm{b}} _{\Q,\mathrm{ovcoh}} ( \smash{\widehat{\D}} _{\PP '} ^{(\bullet)})$
with proper support over $P$, 
we have the isomorphism of $\smash{\underrightarrow{LD}} ^{\mathrm{b}} _{\Q,\mathrm{coh}} ( \smash{\widehat{\D}} _{\PP} ^{(\bullet)})$
of the form 
$$g _{+} \circ \DD 
(\E ^{\prime (\bullet)} ) 
\riso 
\DD \circ g _{+} 
(\E ^{\prime (\bullet)} ) .
$$
\end{thm}

\begin{proof}
Let $X'$ be a (closed) subscheme of $P'$ which is proper over $P$ via $g$. 
Let $\E ^{\prime (\bullet)} \in \smash{\underrightarrow{LD}} ^{\mathrm{b}} _{\Q,\mathrm{ovcoh}} (X', \PP'/\W)$.
Let $u \colon \PP' \hookrightarrow \PP''$ be an immersion of smooth formal $\W$-schemes, 
$\pi \colon \PP'' \to \PP$ 
be a proper morphism of smooth formal $\W$-schemes
such that
$f = \pi \circ u$.
Let $v \colon \PP' \hookrightarrow \U'' $ 
be a closed immersion 
and $j\colon \U'' \hookrightarrow \PP''$ be an open immersion
such that $u = j \circ v$.

From the relative duality isomorphism in the proper case (see \cite{Vir04}), 
$\DD  v _+ (\E ^{\prime (\bullet)}) \riso 
v _+   \DD (\E ^{\prime (\bullet)})$.
Set $\FF ^{\prime (\bullet)} := v _+ (\E ^{\prime (\bullet)})$.
Since
$j _+ \FF ^{\prime (\bullet)} 
\in 
\smash{\underrightarrow{LD}} ^{\mathrm{b}} _{\Q,\mathrm{ovcoh}} (X', \PP '' /\W) $
has his support in $X'$, then 
$ \DD  j _+ \FF ^{\prime (\bullet)} 
\in 
 \smash{\underrightarrow{LD}} ^{\mathrm{b}} _{\Q,\mathrm{coh}} ( \smash{\widehat{\D}} _{\PP ''} ^{(\bullet)})$
 and has its support in $X'$. 
Hence, 
$ \DD  j _+ \FF ^{\prime (\bullet)} 
\riso 
j _+ j ^!  \DD  j _+ \FF ^{\prime (\bullet)}$. 
Moreover, this is obvious that
$j ^!  \DD  j _+ \FF ^{\prime (\bullet)}
\riso 
\DD  j ^!   j _+ \FF ^{\prime (\bullet)}
\riso 
\DD \FF ^{\prime (\bullet)}
$.
Hence, 
$ \DD  j _+ \FF ^{\prime (\bullet)} 
\riso 
j _+  \DD  \FF ^{\prime (\bullet)}$. 
By composition we get 
$\DD  u _+ (\E ^{\prime (\bullet)}) \riso 
u _+   \DD (\E ^{\prime (\bullet)})$.
Since $\pi$ is proper, 
from the relative duality isomorphism in the proper case (see \cite{Vir04}), 
we obtain the first isomorphism
$\DD  \pi _+  u _+ (\E ^{\prime (\bullet)}) 
\riso 
\pi _+ \DD  u _+ (\E ^{\prime (\bullet)}) 
\riso
\pi _+  u _+ \DD (\E ^{\prime (\bullet)}) $. 
Hence, we are done.

\end{proof}

\subsection{Constructions of stable data of coefficients}

\begin{empt}
Let $\W$ be an object of $\mathrm{DVR}  (\V)$  (see Definition \ref{DVR}). 
Let $f \colon \Y \to \X$ be a morphism of smooth formal $\W$-schemes. 
Following \cite[2.1.4]{caro_surcoherent}, for any 
$\E ^{(\bullet)} \in \smash{\underrightarrow{LD}} ^{\mathrm{b}} _{\Q,\mathrm{qc}} ( \smash{\widehat{\D}} _{\X} ^{(\bullet)})$, 
$\FF ^{(\bullet)} \in \smash{\underrightarrow{LD}} ^{\mathrm{b}} _{\Q,\mathrm{qc}} ( \smash{\widehat{\D}} _{\Y} ^{(\bullet)})$
we have the isomorphism
\begin{equation}
\label{2.1.4surcoh}
f _{+} \left (\FF ^{(\bullet)}
\smash{\overset{\L}{\otimes}}   ^{\dag}
_{\O  _{\Y}} 
f ^{!} (\E^{(\bullet) }) \right )
\riso 
f _{+} (\FF ^{(\bullet)}) 
\smash{\overset{\L}{\otimes}}   ^{\dag}
_{\O  _{\X}} \E^{(\bullet) }  [d _{Y/X}],
\end{equation}
where $d _{Y/X}:=\dim Y -\dim X$.

Let $U$ be a subscheme of $X$. 
Since this is not explicitly written in the literature, let us clarify the following isomorphism.
Using \cite[2.2.6.1, 2.2.8, 2.2.14]{caro_surcoherent}
(or for a wider version with more details, use 
\cite[4.3.6.1,4.4]{caro-stab-sys-ind-surcoh})
for any $\E ^{(\bullet)} \in \smash{\underrightarrow{LD}} ^{\mathrm{b}} _{\Q,\mathrm{qc}} ( \smash{\widehat{\D}} _{\X} ^{(\bullet)})$, 
$\E ^{\prime(\bullet)} \in \smash{\underrightarrow{LD}} ^{\mathrm{b}} _{\Q,\mathrm{qc}} ( \smash{\widehat{\D}} _{\X} ^{(\bullet)})$, 
we have the isomorphisms  
\begin{equation}
\label{localfunct-tensprod}
\R \underline{\Gamma} ^\dag _{U}
\left (
\E ^{\prime (\bullet)}
\smash{\overset{\L}{\otimes}}   ^{\dag}
_{\O  _{\X}} 
\E ^{(\bullet)}
\right )
\riso
\R \underline{\Gamma} ^\dag _{U}
(\E ^{\prime (\bullet)} )
\smash{\overset{\L}{\otimes}}   ^{\dag}
_{\O  _{\X}} 
\E ^{(\bullet)}
\riso 
\E ^{\prime (\bullet)}
\smash{\overset{\L}{\otimes}}   ^{\dag}
_{\O  _{\X}} 
\R \underline{\Gamma} ^\dag _{U} (\E ^{(\bullet)}).
\end{equation}

\end{empt}

\begin{empt}
[Base change and their commutation with cohomological operations]
\label{comm-chg-base}

Let $\alpha \colon \W \to \W' $ be a morphism of $\mathrm{DVR}  (\V)$,
let $\X$ be a smooth formal scheme over $\W$, 
$\E ^{(\bullet)} \in \smash{\underrightarrow{LD}} ^{\mathrm{b}} _{\Q,\mathrm{qc}} ( \smash{\widehat{\D}} _{\X} ^{(\bullet)})$
$\X' := \X \times _{\Spf (\W)} \Spf \W'$ and $\pi  \colon \X' \to \X$ 
be the projection.
The base change of $\E ^{(\bullet)} $ by $\alpha$ is 
the object $\pi  ^{!} (\E ^{(\bullet)} )= 
\pi  ^{! (\bullet)} (\E ^{(\bullet)})$ of 
$\smash{\underrightarrow{LD}} ^{\mathrm{b}} _{\Q,\mathrm{qc}} ( \smash{\widehat{\D}} _{\X '} ^{(\bullet)})$ (see \cite[2.2.2]{Beintro2}).
Similarly to \cite[2.2.2]{Beintro2}, 
it will simply be denoted by 
$ \W'  \smash{\overset{\L}{\otimes}}   ^{\dag}
_{\W}  \E^{(\bullet) }$.

From \cite[2.4.2]{Beintro2}, push forwards commute with base change. 
The commutation of base change with extraordinary pullbacks, local cohomological functors,
duals functors (for coherent complexes), and tensor products is 
straightforward. 

\end{empt}

We will need later the following Lemmas. 
\begin{lem}
\label{lem-stabextpullback}
Let $\mathfrak{C}$ be a data of coefficients stable under local cohomological functors.
Then the data of coefficients 
$\mathfrak{C}$
is stable under smooth extraordinary pullbacks and  satisfies 
$BK ^!$ 
if and only if $\mathfrak{C}$ is stable under extraordinary pullbacks
(see Definitions \ref{dfn-stable-data}). 
\end{lem}

\begin{proof}
Since the converse is obvious, 
let us check that if $\mathfrak{C}$
is stable under smooth extraordinary pullbacks and  satisfies 
$BK ^!$ 
then $\mathfrak{C}$ is stable under extraordinary pullbacks.
Let $\W$ be an object of $\mathrm{DVR}  (\V)$, 
$f \colon \Y \to \X$ be a morphism of  smooth formal schemes over $\W$, 
and $\E ^{(\bullet)}$ be an object of $\mathfrak{C} (\X)$. 
Since $f $ is the composition of its graph $\Y \hookrightarrow \Y\times \X$ 
followed by the projection 
$\Y \times \X \to \X$ which is smooth, 
using the stability under smooth extraordinary pullbacks, we reduce to the case where 
$f$ is a closed immersion. 
From the stability under local cohomological functors, 
$\R \underline{\Gamma} ^\dag _{Y} \E ^{(\bullet)} \in \mathfrak{C} (\X)$. 
Since 
 $\mathfrak{C}$ satisfies $BK ^!$,
 then 
$f ^! \R \underline{\Gamma} ^\dag _{Y} \E ^{(\bullet)}
\in 
\mathfrak{C} (\Y)$.
We conclude using 
 the isomorphism 
$f ^! \R \underline{\Gamma} ^\dag _{Y} \E ^{(\bullet)} \riso 
f ^! ( \E ^{(\bullet)})$ 
(use \cite[5.2.8]{caro-stab-sys-ind-surcoh}). 
\end{proof}

\begin{lem}
\label{S(D,C)stability3}
Let $\mathfrak{D}$ be a data of coefficients over $\V$.
If $\mathfrak{D}$ contains $\mathfrak{B} _\mathrm{div}$ 
(see the second example  of \ref{ex-Dcst}),
and if $\mathfrak{D}$ is stable under 
tensor products,  
then 
$\mathfrak{D}$ is stable 
under localizations outside a divisor.
\end{lem}

\begin{proof}
This is a consequence of the isomorphisms 
\ref{localfunct-tensprod} (we use the case where 
$\E ^{\prime (\bullet)}= \O _{\X} ^{(\bullet)}$).
\end{proof}

\begin{lem}
\label{S(D,C)stability3-pre}
Let $\mathfrak{C}$ be a data of coefficients stable under devissage.
Then the data of coefficients $\mathfrak{C}$ is stable under local cohomological functors if and only if it 
 stable under localizations outside a divisor. 

\end{lem}

\begin{proof}
This is checked by using exact triangles of localisation (see \cite[2.2.6]{caro_surcoherent} or \cite[4.4.3]{caro-stab-sys-ind-surcoh}
for a more precise and general version),
Mayer-Vietoris exact triangles (see \cite[2.2.16]{caro_surcoherent} or \cite[4.5.2]{caro-stab-sys-ind-surcoh}).
\end{proof}

\begin{rem}
\label{rem-div-cst}
Let $\mathfrak{C}$ be a data of coefficients stable under devissage which contains 
$\mathfrak{B} _\mathrm{div}$ (see Notation \ref{ex-cst-surcoh}). 
Then using the arguments of the proof of \ref{S(D,C)stability3-pre},
we check 
$\mathfrak{C}$ contains $\mathfrak{B} _\mathrm{cst}$ (see Notation \ref{ex-cst-surcoh}).
Similarly, we check that 
$S _0 (\mathfrak{B} _\mathrm{div},\mathfrak{C}) 
=
S _0 (\mathfrak{B} _\mathrm{cst},\mathfrak{C}) $
and
$S (\mathfrak{B} _\mathrm{div},\mathfrak{C}) 
=
S (\mathfrak{B} _\mathrm{cst},\mathfrak{C}) $.
\end{rem}

\begin{lem}
\label{dual-BK1-f+}
Let $\mathfrak{C}$ be a data of coefficients. 
If the data of coefficients $\mathfrak{C} $ 
satisfies $BK ^!$,
then so is $\mathfrak{C}  ^{\vee}$ (see Notation \ref{ntn-dual}).
\end{lem}

\begin{proof}
Let  $\W$ be an object of $\mathrm{DVR}  (\V)$, 
  $u \colon \ZZ \hookrightarrow \X$ be a closed immersion of  smooth formal schemes over $\W$, 
$\E ^{(\bullet)}$ be an object of $\mathfrak{C} ^\vee (\X)$ with support in $\ZZ$.
From Berthelot-Kashiwara theorem (see \cite[5.3.6]{caro-stab-sys-ind-surcoh}),
there exists $\G ^{(\bullet)} \in 
\smash{\underrightarrow{LD}} ^{\mathrm{b}} _{\Q,\mathrm{coh}} ( \smash{\widehat{\D}} _{\ZZ} ^{(\bullet)})$
such that 
$u _+ ( \G ^{(\bullet)} ) \riso \E ^{(\bullet)} $.
Since $\DD _{\X} (\E^{(\bullet) }) \in \mathfrak{C} (\X)$ has his support in $\ZZ$, 
since $BK ^!$ property holds, we get 
$u ^! \DD _{\X} (\E^{(\bullet) }) \in  \mathfrak{C} (\ZZ)$.
From the relative duality isomorphism (see \cite{Vir04}),
we get $\DD _{\X} (\E^{(\bullet) })
\riso
\DD _{\X} u _+ ( \G ^{(\bullet)} ) 
\riso u _+ (\DD _{\ZZ} ( \G ^{(\bullet)} ))$.
Hence, 
$u ^! u _+ (\DD _{\ZZ} ( \G ^{(\bullet)} )) \in  \mathfrak{C} (\ZZ)$.
From Berthelot-Kashiwara theorem (see \cite[5.3.6]{caro-stab-sys-ind-surcoh}),
we have $u ^!u _+ (\DD _{\ZZ} ( \G ^{(\bullet)} ))\riso \DD _{\ZZ} ( \G ^{(\bullet)} )$.
This yields
$\DD _{\ZZ} ( \G ^{(\bullet)} )
\in  \mathfrak{C} (\ZZ)$.
Since
$u ^! (\E ^{(\bullet)} ) \riso u ^!u _+ (\G ^{(\bullet)} )\riso \G ^{(\bullet)} $, 
this implies that $u ^! (\E ^{(\bullet)} ) \in  \mathfrak{C} ^{\vee}(\ZZ)$.

\end{proof}

\begin{lem}
\label{cor-rel-dual-isom}
Let $\mathfrak{C}$ be a data of coefficients which is included in $\smash{\underrightarrow{LD}} ^{\mathrm{b}} _{\Q,\mathrm{ovcoh}}$. 
If the data of coefficients $\mathfrak{C} $ is stable under realizable pushforwards,
then so is $\mathfrak{C}  ^{\vee}$.
\end{lem}

\begin{proof}
This is a straightforward consequence of 
the relative duality isomorphism of the form \ref{rel-dual-isom}.
\end{proof}

\begin{lem}
\label{S(D,C)stability}
Let $\mathfrak{C}$ and $\mathfrak{D}$ be two data of coefficients.
With the notation of \ref{dfnS(D,C)}, 
we have the following properties.

\begin{enumerate}
\item 
With Notation \ref{ex-Dcst}, 
if $\mathfrak{D}$ contains $\mathfrak{B} _\emptyset$
(resp. $\mathfrak{B} _\mathrm{div}$) 
then  $S(\mathfrak{D}, \mathfrak{C})$
 is contained in  $ \mathfrak{C}$
 (resp. and in $\smash{\underrightarrow{LD}} ^{\mathrm{b}} _{\Q,\mathrm{ovcoh}}$). 

\item Suppose that $\mathfrak{D} $ is stable under smooth extraordinary pullbacks, base change  and tensor products
 and that $\mathfrak{C} $ contains $\mathfrak{D} $.
Then 
$S(\mathfrak{D}, \mathfrak{C})$ contains $\mathfrak{D}$.
If $\mathfrak{C} $ is moreover stable under shifts then
$S \left (S(\mathfrak{D}, \mathfrak{C}), S(\mathfrak{D}, \mathfrak{C}) \right)$ 
contains $\mathfrak{D}$.

\item If the data of coefficients $\mathfrak{C}$ is local (resp. stable under devissages, resp. stable under direct factors), 
then so is $\mathfrak{C} ^{\vee}$ and 
$S(\mathfrak{D}, \mathfrak{C})$.

\item The data of coefficients $S(\mathfrak{D}, \mathfrak{C})$ is stable under smooth extraordinary pullbacks and under base change.

\item 
\label{S(D,C)stability3bis}
If 
$\mathfrak{D}$ is stable under local cohomological functors (resp. localizations outside a divisor) , then so is 
$S(\mathfrak{D}, \mathfrak{C})$.

\item Suppose that $\mathfrak{C}$ 
is stable under realizable pushforwards and shifts. Suppose that
 $\mathfrak{D}$ contains $\mathfrak{B} _\mathrm{div}$,
and is stable under extraordinary pullbacks.   
Then the data of coefficients $S(\mathfrak{D}, \mathfrak{C})$ is stable under realizable pushforwards.

\item Suppose that $\mathfrak{C}$ is 
stable under shifts, and satisfies $BK ^!$.
Moreover, suppose that $\mathfrak{D}$ 
satisfies $BK _+$. 
Then 
the data of coefficients $S(\mathfrak{D}, \mathfrak{C})$ satisfies $BK ^!$.

\end{enumerate}
\end{lem}

\begin{proof}
a) The respective case of 1) is a consequence of the equality
$ \smash{\underrightarrow{LD}} ^{\mathrm{b}} _{\Q,\mathrm{ovcoh}}= 
S _0 (\mathfrak{B} _\mathrm{div}, \smash{\underrightarrow{LD}} ^{\mathrm{b}} _{\Q,\mathrm{coh}})$). 
The non respective case of  1), 
the assertions 3) and 4) are obvious.

b) Let us prove 2). 
Using every hypotheses on $\mathfrak{C}$,  
we check easily that $S(\mathfrak{D}, \mathfrak{C})$ contains $\mathfrak{D}$.
Let us suppose moreover $\mathfrak{C}$ stable under shifts. 
Since $\mathfrak{D}$ is stable under base change, 
it remains to check 
$\mathfrak{D}$ is included in 
$S _0 \left (S(\mathfrak{D}, \mathfrak{C}), S(\mathfrak{D}, \mathfrak{C}) \right)$.
Let  $\W$ be an object of $\mathrm{DVR}  (\V)$, 
 $\X$ be a  smooth formal scheme over $\W$, 
  $\E ^{(\bullet)} \in \mathfrak{D} (\X)$. 
Let   $f\colon \Y \to \X$  be a smooth morphism of  smooth formal $\W$-schemes, 
$\FF ^{(\bullet)}
\in
S(\mathfrak{D}, \mathfrak{C})(\Y)$.
We have to check that 
$\FF ^{(\bullet)}
\smash{\overset{\L}{\otimes}}   ^{\dag}
_{\O  _{\Y}} f ^{!} (\E^{(\bullet) })
\in
S(\mathfrak{D}, \mathfrak{C})(\Y)$.
Since $\mathfrak{D}$ is stable under base change,
since tensor products and extraordinary inverse images commute with base change, 
we reduce to establish that 
$\FF ^{(\bullet)}
\smash{\overset{\L}{\otimes}}   ^{\dag}
_{\O  _{\Y}} f ^{!} (\E^{(\bullet) })
\in
S _0(\mathfrak{D}, \mathfrak{C})(\Y)$.
Let $g\colon \ZZ \to \Y$  be a smooth morphism of  smooth formal $\W$-schemes, 
let $\G ^{(\bullet)}
\in
\mathfrak{D} (\ZZ)$.
We have the isomorphisms
\begin{align}
\notag
\G ^{(\bullet)}
\smash{\overset{\L}{\otimes}}   ^{\dag}
_{\O  _{\ZZ}} 
 g ^! \left (
 \FF ^{(\bullet)}
\smash{\overset{\L}{\otimes}}   ^{\dag}
_{\O  _{\Y}} 
f ^{!} (\E^{(\bullet) })
\right )
&
\underset{\cite[2.1.9.1]{caro-stab-prod-tens}}{\riso} 
\left ( \G ^{(\bullet)}
\smash{\overset{\L}{\otimes}}   ^{\dag}
_{\O  _{\ZZ}} 
 g ^!  \FF ^{(\bullet)} \right )
\smash{\overset{\L}{\otimes}}   ^{\dag}
_{\O  _{\ZZ}} 
(f \circ g) ^{!} (\E^{(\bullet) })
[-d _{Z/Y}]
\\
\notag
&
\riso
\left ( \G ^{(\bullet)}
\smash{\overset{\L}{\otimes}}   ^{\dag}
_{\O  _{\ZZ}} 
(f \circ g) ^{!} (\E^{(\bullet) })
[-d _{Z/Y}] \right )
\smash{\overset{\L}{\otimes}}   ^{\dag}
_{\O  _{\ZZ}} 
 g ^!  \FF ^{(\bullet)}. 
 \hspace{2cm} 
 (\star)
\end{align}
Since $\mathfrak{D} $ is stable smooth extraordinary pullbacks, shift
and tensor products, then 
$ \G ^{(\bullet)}
\smash{\overset{\L}{\otimes}}   ^{\dag}
_{\O  _{\ZZ}} 
(f \circ g) ^{!} (\E^{(\bullet) })
[-d _{Z/Y}]
\in \mathfrak{D} (\ZZ)$.
Since 
$ \FF ^{(\bullet)} 
\in 
S(\mathfrak{D}, \mathfrak{C})(\Y)$,
then
$\left ( \G ^{(\bullet)}
\smash{\overset{\L}{\otimes}}   ^{\dag}
_{\O  _{\ZZ}} 
(f \circ g) ^{!} (\E^{(\bullet) })
[-d _{Z/Y}] \right )
\smash{\overset{\L}{\otimes}}   ^{\dag}
_{\O  _{\ZZ}} 
 g ^!  \FF ^{(\bullet)}
 \in 
 \mathfrak{C} (\ZZ)$.
Hence, using $(\star)$ we conclude.

c) Let us check 5). From the commutation of the base change with local cohomological functors, we reduce to check that 
$S _0(\mathfrak{D}, \mathfrak{C})$ is stable under local cohomological functors (resp. localisations outside a divisor).
Using \ref{localfunct-tensprod} and the commutation of local cohomological functors with extraordinary inverse images 
(see \cite[5.2.8]{caro-stab-sys-ind-surcoh}), we check the desired properties.

d) Let us check 6). Let $\W$ be an object of $\mathrm{DVR}  (\V)$. 
Let 
$g \colon \X '\to \X$ be a morphism of smooth formal $\W$-schemes.
Let 
$\E ^{\prime (\bullet)} \in S(\mathfrak{D}, \mathfrak{C}) (\X')$
with proper support over $X$. 
From the commutation of the base change with realizable pushforwards 
(see \ref{comm-chg-base}),
we reduce to check
$g _+ (\E^{\prime (\bullet) })
\in
S _0(\mathfrak{D}, \mathfrak{C})(\X)$.
Let $f \colon \Y \to \X$ be a smooth morphism of smooth formal $\W$-schemes.
Let  
$f '\colon \Y \times _{\X} \X '\to \X'$  and 
$g '\colon \Y \times _{\X} \X '\to \Y$ be the structural projections. 
Let   
$\FF ^{(\bullet)}
\in
\mathfrak{D}(\Y)$.
We have to check 
$\FF ^{(\bullet)}
\smash{\overset{\L}{\otimes}}   ^{\dag}
_{\O  _{\Y}} f ^{!} g _+ (\E^{\prime (\bullet) })
\in
 \mathfrak{C}(\Y)$.
Since  $\mathfrak{D}$ contains $\mathfrak{B} _\mathrm{div}$, then
$\E ^{\prime (\bullet)} \in 
\smash{\underrightarrow{LD}} ^{\mathrm{b}} _{\Q,\mathrm{ovcoh}} ( \smash{\widehat{\D}} _{\X '} ^{(\bullet)})$.
Hence from \ref{basechange} we get 
$f ^{!} g _+ (\E^{\prime (\bullet) })
\riso 
g '_{+}  f ^{\prime !} (\E ^{\prime (\bullet)} )$.
Using the hypotheses on 
$\mathfrak{C}$
and 
$\mathfrak{D}$,
via the isomorphisms
$$
\FF ^{(\bullet)}
\smash{\overset{\L}{\otimes}}   ^{\dag} _{\O  _{\Y}} 
f ^{!} g _+ (\E^{\prime (\bullet) })
\underset{\ref{basechange}}{\riso} 
\FF ^{(\bullet)}
\smash{\overset{\L}{\otimes}}   ^{\dag} _{\O  _{\Y}} 
g '_{+}  f ^{\prime !} (\E ^{\prime (\bullet)} )
\underset{\ref{2.1.4surcoh}}{\riso} 
g '_{+} \left ( 
  g ^{\prime !} (\FF ^{(\bullet)})
\smash{\overset{\L}{\otimes}}   ^{\dag} _{\O  _{\Y}} 
  f ^{\prime !} (\E ^{\prime (\bullet)} )
  \right )
  [-d _{X'/X}],$$
we check  that 
$  \FF ^{(\bullet)}
\smash{\overset{\L}{\otimes}}   ^{\dag} _{\O  _{\Y}} 
f ^{!} g _+ (\E^{\prime (\bullet) }) \in
 \mathfrak{C}(\Y)$.

e) Let us check 7) (we might remark the similarity with the proof of \cite[3.1.7]{caro_surcoherent}). 
Let $\W$ be an object of $\mathrm{DVR}  (\V)$, 
$u\colon \X \hookrightarrow \PP$ be a closed immersion of  smooth formal schemes  over $\W$. 
Let $\E ^{(\bullet)} \in S  (\mathfrak{D}, \mathfrak{C}) (\PP)$ with support in $\X$.
We have to check that $u ^! (\E ^{(\bullet)} ) \in S  (\mathfrak{D}, \mathfrak{C}) (\X)$.
We already know that
$u ^! (\E ^{(\bullet)})
\in 
\smash{\underrightarrow{LD}} ^{\mathrm{b}} _{\Q,\mathrm{coh}} ( \smash{\widehat{\D}} _{\X} ^{(\bullet)})$
(thanks to Berthelot-Kashiwara theorem
\cite[5.3.6]{caro-stab-sys-ind-surcoh}).
Since extraordinary pullbacks commute with base change, 
we reduce to check that $u ^! (\E ^{(\bullet)} ) \in S _0  (\mathfrak{D}, \mathfrak{C}) (\X)$.
Let $f\colon \Y \to \X$ be a smooth morphism of  smooth formal $\W$-schemes, 
let $\FF ^{(\bullet)}
\in
\mathfrak{D} (\Y)$.
We have to check 
$\FF ^{(\bullet)}
\smash{\overset{\L}{\otimes}}   ^{\dag}
_{\O  _{\Y}} f ^{!} (u ^! \E^{(\bullet) })
\in
\mathfrak{C} (\Y)$.
The morphism $f$ is the composition of its graph
$\Y \hookrightarrow \Y \times \X $
with the projection  
$\Y \times \X  \to \X$.
We denote by $v$ the composition of 
$\Y \hookrightarrow \Y \times \X $
with 
$id \times u \colon \Y \times \X \hookrightarrow \Y \times\PP$.
Let $g \colon \Y \times\PP \to \PP$ be the projection. 
Set $\U:=  \Y \times\PP$.
Since 
$\mathfrak{D} $ 
satisfies $BK _+$, 
then
$v _+ (\FF ^{(\bullet)}) \in \mathfrak{D} ( \U)$.
Since 
$\E ^{(\bullet)} \in S _0  (\mathfrak{D}, \mathfrak{C}) (\PP)$
and $g$ is smooth,
this yields 
$v _+ (\FF ^{(\bullet)})
\smash{\overset{\L}{\otimes}}   ^{\dag}
_{\O  _{\U}}  g ^{!} (\E^{(\bullet) })
\in \mathfrak{C} (\U)$.
Since $\mathfrak{C} $  satisfies $BK ^!$, 
this implies 
$v ^!\left (v _+ (\FF ^{(\bullet)})
\smash{\overset{\L}{\otimes}}   ^{\dag}
_{\O  _{\U}} g ^{!} (\E^{(\bullet) }) \right ) 
\in \mathfrak{C} (\Y)$.
Since 
$v ^!\left (v _+ (\FF ^{(\bullet)})
\smash{\overset{\L}{\otimes}}   ^{\dag}
_{\O  _{\U}}   g ^{!} (\E^{(\bullet) }) \right ) 
\riso 
v ^! v _+ (\FF ^{(\bullet)})
\smash{\overset{\L}{\otimes}}   ^{\dag}
_{\O  _{\Y}} 
v ^! g ^{!} (\E^{(\bullet) }) [r]$ with $r$ an integer (see \cite[2.1.9.1]{caro-stab-prod-tens}), 
since 
$v ^! v _+ (\FF ^{(\bullet)}) \riso 
\FF ^{(\bullet)}$
(see Berthelot-Kashiwara theorem
\cite[5.3.6]{caro-stab-sys-ind-surcoh}),
since $\mathfrak{C}$ is stable under shifts, 
since by transitivity
$v ^! g ^{!}  \riso f ^! u ^!$,
we get
$\FF ^{(\bullet)}
\smash{\overset{\L}{\otimes}}   ^{\dag}
_{\O  _{\Y}} 
f ^! u ^! (\E^{(\bullet) })
\in 
\mathfrak{C} (\Y)$.

\end{proof}

\begin{dfn}
Let $\mathfrak{D}$ be a data of coefficients over $\V$.
We say that $\mathfrak{D}$ is almost stable under dual functors if the following property holds:
for any data of coefficients $\mathfrak{C}$ over $\V$ which is stable under
devissages, 
direct factors and 
realizable pushforwards,
if $\mathfrak{D} \subset \mathfrak{C}$
then 
$\mathfrak{D} ^{\vee} \subset \mathfrak{C}$.
Remark from the biduality isomorphism that 
the inclusion 
$\mathfrak{D} ^{\vee} \subset \mathfrak{C}$
is equivalent to 
the following one 
$\mathfrak{D} \subset  \mathfrak{C} ^\vee$.

\end{dfn}

\begin{ntn}
\label{dfnqupre}
Let $\mathfrak{C}, \mathfrak{D}$ be two data of coefficients.
We put 
$T _0 (\mathfrak{D} ,\mathfrak{C}) := 
S(\mathfrak{D}  ,\mathfrak{C})$.
By induction on $i \in \N$, we set 
$U _i (\mathfrak{D}  ,\mathfrak{C}):= T _i (\mathfrak{D}  ,\mathfrak{C}) \cap T _i (\mathfrak{D}  ,\mathfrak{C} ) ^{\vee}$, 
$\widetilde{T} _{i} (\mathfrak{D}  ,\mathfrak{C}) := 
S(\mathfrak{D} , U _i (\mathfrak{D}  ,\mathfrak{C}))$
and
$T _{i+1} (\mathfrak{D}  ,\mathfrak{C}):= 
S(\widetilde{T} _{i} (\mathfrak{D}  ,\mathfrak{C}), \widetilde{T} _{i} (\mathfrak{D}  ,\mathfrak{C}) )$.
We put $T (\mathfrak{D}  ,\mathfrak{C}) := \cap _{i\in \N} T _{i} (\mathfrak{D}  ,\mathfrak{C}) $.
\end{ntn}

\begin{thm}
\label{dfnquprop}
Let $\mathfrak{D}$ be a data of coefficients
which contains $\mathfrak{B} _\mathrm{div}$,
which satisfies $BK _+$, 
which is stable under extraordinary pullbacks, base change, 
tensor products 
and which is almost stable under dual functors. 
Let $\mathfrak{C}$ be a data of coefficients 
containing $\mathfrak{D}$, 
which satisfies $BK ^!$, 
is 
stable under 
devissages, direct factors and 
realizable pushforwards. 
Then, 
the data of coefficients $T(\mathfrak{D}, \mathfrak{C})$ 
(see Definition \ref{dfnqupre})
 is included in 
$\mathfrak{C} $,
contains 
$\mathfrak{D} $,
is 
stable by devissages, direct factors, 
local cohomological functors, 
realizable pushforwards, extraordinary pullbacks, base change, tensor products
and duals. 

\end{thm}

\begin{proof}
I) First, we check by induction on $i\in \N $ that the data of coefficients 
$T _i (\mathfrak{D},\mathfrak{C})$ 
contains $\mathfrak{D}$, 
is contained in $\mathfrak{C}$, 
is stable under devissages, direct factors, 
local cohomological functors,
realizable pushforwards, extraordinary pullbacks, base change
(which implies such stability properties for $T(\mathfrak{D},\mathfrak{C})$). 

a)
Let us verify that $T _0 (\mathfrak{D},\mathfrak{C})$ satisfies these properties.
From \ref{S(D,C)stability}.1 (resp. \ref{S(D,C)stability}.2),
$T _0 (\mathfrak{D},\mathfrak{C})$ is included in $\mathfrak{C}$
(resp. contains $\mathfrak{D}$).
From \ref{S(D,C)stability}.3, 
$T _0 (\mathfrak{D},\mathfrak{C})$
is stable under devissages, and under direct factors.
From \ref{S(D,C)stability}.4, 
$T _0 (\mathfrak{D},\mathfrak{C})$
is stable under smooth extraordinary pullbacks and under base change. 
From \ref{S(D,C)stability3} and \ref{S(D,C)stability}.5,
$T _0 (\mathfrak{D},\mathfrak{C})$ is stable under 
localizations outside a divisor. 
Since $T _0 (\mathfrak{D},\mathfrak{C})$ is stable under devissage, 
then from \ref{S(D,C)stability3-pre} 
 $T _0 (\mathfrak{D},\mathfrak{C})$ is stable
 under local cohomological functors.
From  \ref{S(D,C)stability}.6
(resp. \ref{S(D,C)stability}.7), 
$T _0 (\mathfrak{D},\mathfrak{C})$
is stable realizable pushforwards(resp. satisfies $BK ^!$).
Hence, from \ref{lem-stabextpullback}, this implies that  
$T _0 (\mathfrak{D},\mathfrak{C})$
is stable under  extraordinary pullbacks.

b) Suppose that this is true for $T _i (\mathfrak{D},\mathfrak{C})$ for some $i \in \N$.

i) Since 
$\mathfrak{D}$ is almost stable under duals, 
then $U _i (\mathfrak{D}  ,\mathfrak{C})$
contains 
$\mathfrak{D}$.
  Since $\mathfrak{D}$ is stable by tensor products, extraordinary pullbacks, and base change 
then, using \ref{S(D,C)stability}.2
(where $\mathfrak{C}$ is replaced by
$U _i (\mathfrak{D}  ,\mathfrak{C})$),
this implies that $\mathfrak{D}$ is contained in
$\widetilde{T} _{i} (\mathfrak{D}  ,\mathfrak{C})$
and
$T _{i+1} (\mathfrak{D},\mathfrak{C})$.
Using \ref{S(D,C)stability}.1, we get that 
$\widetilde{T} _{i} (\mathfrak{D}  ,\mathfrak{C})$
and
$T _{i+1} (\mathfrak{D},\mathfrak{C})$
are included in $\mathfrak{C}$.

ii) 
From Lemmas \ref{dual-BK1-f+} 
(resp. \ref{cor-rel-dual-isom}, resp. \ref{S(D,C)stability}.3), 
$U _i (\mathfrak{D}  ,\mathfrak{C})$
satisfies $BK ^!$ (resp. is stable under realizable pushforwards,
resp. is stable under devissages, and direct factors).
Hence, using the step I)a) in the case where 
$\mathfrak{C}$ is replaced by 
$U _i (\mathfrak{D}  ,\mathfrak{C})$,
we get 
that $\widetilde{T} _{i} (\mathfrak{D}  ,\mathfrak{C})$
is stable under devissages, direct factors, 
local cohomological functors,
realizable pushforwards, extraordinary pullbacks, base change.
From Lemma \ref{S(D,C)stability}.3
(resp. Lemma \ref{S(D,C)stability}.4, 
resp. Lemma \ref{S(D,C)stability}.5, 
resp. Lemma \ref{S(D,C)stability}.6,
resp. Lemma \ref{S(D,C)stability}.7), 
this yields that 
$T _{i+1} (\mathfrak{D},\mathfrak{C})$
is stable under devissages, and direct factors
(resp. smooth extraordinary pullbacks and base change,
resp. local cohomological functors,
resp. realizable pushforwards,
resp. satisfies $BK ^!$). 
Using \ref{lem-stabextpullback}, this implies that  
$T _{{i+1}} (\mathfrak{D},\mathfrak{C})$
is stable under  extraordinary pullbacks.

II) From \ref{S(D,C)stability}.1, 
$T _{i+1} (\mathfrak{D},\mathfrak{C})$ is contained in 
$\widetilde{T} _{i} (\mathfrak{D},\mathfrak{C})$
and 
$\widetilde{T} _{i} (\mathfrak{D},\mathfrak{C})$
is contained in 
$T _i (\mathfrak{D},\mathfrak{C}) \cap T _i (\mathfrak{D},\mathfrak{C}) ^{\vee}$.
Hence, by construction, the tensor product of two objects of 
$T _{i+1} (\mathfrak{D},\mathfrak{C})$ is an object of 
$T _{i} (\mathfrak{D},\mathfrak{C})$ and
the dual of an object of 
$T _{i+1} (\mathfrak{D},\mathfrak{C})$ is an object of 
$T _{i} (\mathfrak{D},\mathfrak{C})$.
\end{proof}

\begin{rem}
\label{rem-Grothendieck-h}
We keep the notation and hypothesis of \ref{dfnquprop}.
\begin{enumerate}
\item \label{rem-Grothendieck-h1}
From \ref{6operations}, the proposition \ref{dfnquprop} gives a formalism of Grothendieck's six operations and base change 
on couples.
From Remark \ref{rem-overhol}.1--2, we get that the data of coefficients $T(\mathfrak{D} ,\mathfrak{C})$
is contained in $\smash{\underrightarrow{LD}} ^{\mathrm{b}} _{\Q,\mathrm{h}}$. 

\item If the data of coefficients $\mathfrak{C}$ is local, 
then so is $T(\mathfrak{D} ,\mathfrak{C})$.

\end{enumerate}
\end{rem}

\begin{ntn}
\label{TminTmax}
Let $\mathfrak{D}$ and $\mathfrak{C}$ be two data of coefficients
satisfying the hypotheses of the proposition \ref{dfnquprop}.

We define the data of coefficients 
$T _{\mathrm{max}}(\mathfrak{D}, \mathfrak{C})$
(resp. $T _{\mathrm{min}}(\mathfrak{D}, \mathfrak{C})$)
as follows: 
for any object $\W$ of $\mathrm{DVR}  (\V)$, 
for any  smooth formal scheme $\X$ over $\W$,  
the category $T _{\mathrm{max}}(\mathfrak{D}, \mathfrak{C}) (\X)$ 
(resp. $T _{\mathrm{min}}(\mathfrak{D}, \mathfrak{C}) (\X)$) is the full subcategory of 
$\smash{\underrightarrow{LD}} ^{\mathrm{b}} _{\Q,\mathrm{coh}} ( \smash{\widehat{\D}} _{\X} ^{(\bullet)})$
of objects  $\E ^{(\bullet)}$
satisfying the following $\mathrm{max}$ property (resp. $\mathrm{min}$ property): 
\begin{enumerate}
\item [($\mathrm{max}$)]
there exists a data of coefficients $\mathfrak{B}$
which contains
$\mathfrak{D} $, 
which is 
included in 
$\mathfrak{C}$,
stable by devissages, direct factors, local cohomological functors, 
realizable pushforwards, extraordinary pullbacks, base change, tensor products, duals
and such that 
$\E ^{(\bullet)} \in \mathfrak{B} (\X)$.

\item [($\mathrm{min}$)]
for any 
data of coefficients $\mathfrak{B}$
which contains
$\mathfrak{D} $, 
which is 
included in 
$\mathfrak{C}$,
stable by devissages, direct factors, local cohomological functors, 
realizable pushforwards, extraordinary pullbacks, base change, tensor products, and duals
we have 
$\E ^{(\bullet)} \in \mathfrak{B} (\X)$.

\end{enumerate}

\end{ntn}

\begin{thm}

Let $\mathfrak{D}$ and $\mathfrak{C}$ be two data of coefficients
satisfying the hypotheses of the proposition \ref{dfnquprop}.

The data of coefficients 
$T _{\mathrm{max}}(\mathfrak{D}, \mathfrak{C})$
and
$T _{\mathrm{min}}(\mathfrak{D}, \mathfrak{C})$ (see the definition in \ref{TminTmax})
contains
$\mathfrak{D} $, 
are 
included in 
$\mathfrak{C}$,
stable under devissages, direct factors, 
local cohomological functors, 
realizable pushforwards, extraordinary pullbacks, base change, tensor products, duals.
Moreover, they satisfy the following universal property: 
for any data of coefficients $\mathfrak{B}$
which contains
$\mathfrak{D} $, 
is 
included in 
$\mathfrak{C}$,
stable under devissages, direct factors, 
local cohomological functors, 
realizable pushforwards, extraordinary pullbacks, base change, tensor products, duals, 
the data of coefficients 
$T _{\mathrm{max}}(\mathfrak{D}, \mathfrak{C})$
contains $\mathfrak{B}$
and
the data of coefficients 
$T _{\mathrm{min}}(\mathfrak{D}, \mathfrak{C})$
is included in $\mathfrak{B}$.
\end{thm}

\section{Formalism of Grothendieck six operations for arithmetic $\D$-modules over couples}

Let $\W$ be an object of $\mathrm{DVR}  (\V)$ and $l$ be its residue field.

\subsection{Data of coefficients over frames}

\begin{dfn}
\begin{enumerate}
\item We define the category of frames over $\W$ as follows. 
A {\it frame} $(Y,X,\PP)$ 
over $\W$ means that 
$\PP$ is a realizable smooth formal scheme over $\W$, 
$X$ is a closed subscheme of the special fiber $P$ of $\PP$ and $Y$ is an open subscheme of $X$. 
Let 
$(Y', X', \PP')$ 
and $(Y,X,\PP)$ be two frames over $\W$. 
A morphism $\theta= (b,a,f) \colon (Y', X', \PP')\to (Y,X,\PP)$ of frames over $\W$ 
is the data of a  
morphism $f\colon \PP' \to \PP$ of realizable smooth formal schemes over $\W$,
a morphism $a\colon X' \to X$ of $l$-schemes, 
and a morphism $b \colon Y' \to Y$ of $l$-schemes 
inducing the commutative diagram
$$\xymatrix{
{Y'} 
\ar[d] ^-{b}
\ar@{^{(}->}[r] ^-{}
& 
{X'}
\ar[d] ^-{a}
\ar@{^{(}->}[r] ^-{}
& 
{\PP'} 
\ar[d] ^-{f}
\\
{Y} 
\ar@{^{(}->}[r] ^-{}
& 
{X}
\ar@{^{(}->}[r] ^-{}
& 
{\PP.} 
}$$
If there is no ambiguity with $\W$, we simply say frame or morphism of frames. 

\item A morphism 
$\theta= (b,a,f) \colon (Y', X', \PP')\to (Y,X,\PP)$ of frames over $\W$ 
is said to be {\it complete} 
(resp. {\it strictly complete})
if $a$ is proper (resp. 
$f$ and $a$ are proper). 

\end{enumerate}

\end{dfn}

\begin{dfn}
\begin{enumerate}
\item We define the category of {\it couples} over $\W$ as follow. 
A couple $(Y, X)$ over $\W$ means the two first data of a frame 
over $\W$
of the form $(Y,X, \PP)$.
A frame of the form $(Y,X, \PP)$ is said to be enclosing $(Y,X)$. 
A morphism of couples $u=(b,a)\colon (Y', X') \to (Y, X)$ over $\W$
is the data of a morphism of $l$-schemes of the form 
$a\colon X' \to X$ such that $a (Y' ) \subset Y$ and $b\colon Y' \to Y$ is the induced morphism.

\item A morphism of couples $u=(b,a)\colon (Y', X') \to (Y, X)$ over $\W$
is said to be {\it complete} if $a$ is proper.

\end{enumerate}
 
\end{dfn}

\begin{rem}
\label{rem-complete-frame-coup}
\begin{enumerate}

\item Let $u=(b,a)\colon (Y', X') \to (Y, X)$ be a complete morphism of couples  over $\W$.
Then there exists a strictly complete morphism  of frames over $\W$ of the form
$\theta= (b,a,f) \colon (Y', X', \PP')\to (Y,X,\PP)$.
Indeed, by definition, there exist some frames over $\W$ of the form
$(Y', X', \PP'')$ and 
$(Y,X,\PP)$. 
There exists an immersion 
$\PP'' \hookrightarrow \QQ ''$ with $\QQ''$ a proper and smooth formal $\W$-scheme. 
Hence, put $\PP':= \QQ '' \times \PP$ and let $f\colon \PP' \to \PP$ be the projection.
Since $a$ is proper, $X \hookrightarrow \PP$ is proper, 
and $f$ is proper, then the immersion $X' \hookrightarrow \PP'$ is also proper. 

\item Let $u=(b,a)\colon (Y', X') \to (Y, X)$ be a morphism of couples  over $\W$.
Similarly, we check that there exists a morphism  of frames over $\W$ of the form
$\theta= (b,a,f) \colon (Y', X', \PP')\to (Y,X,\PP)$. 

\end{enumerate}

\end{rem}

\begin{ntn}
\label{ntn-6operations}
Let $\mathfrak{C} $ be a data of coefficients over $\V$.
Let  $(Y, X,\PP)$ be a frame over $\W$. 
We denote by $\mathfrak{C} (Y, \PP/\W)$
the full subcategory of 
$\mathfrak{C}  (\PP)$ 
of objects $\E$ such that there exists an isomorphism of the form 
$\E \riso \R \underline{\Gamma} ^\dag _{Y} (\E)$.
\end{ntn}

\begin{ntn}
\label{ntn-Holpre}
Let  $(Y, X,\PP)$ be a frame over $\W$. 
The full subcategory of
$D ^{\mathrm{b}}  _{\mathrm{coh}} (\smash{\D} ^\dag _{\PP,\Q})$
which is the essential of $\smash{\underrightarrow{LD}} ^{\mathrm{b}} _{\Q,\mathrm{h}} (Y,\PP/\W)$ via the equivalence \ref{limeqcat}
will be denoted by $D^{\mathrm{b}}_{\mathrm{h}} (Y,\PP/\W)$.
Recall 
that $D ^{\mathrm{b}}  _{\mathrm{h}} (\smash{\D} ^\dag _{\PP,\Q})$
is endowed with a canonical t-structure induced by that 
of $D ^{\mathrm{b}}  _{\mathrm{coh}} (\smash{\D} ^\dag _{\PP,\Q})$ (see Remark \ref{rem-overhol}.3). 
Similarly to 
\ref{ntn-t-structureovcoh}, 
there is a canonical t-structure on 
$D^{\mathrm{b}}_{\mathrm{h}} (Y,\PP/\W)$
defined as follows: 
choose $\U$ an open set of $\PP$ such that 
$Y$ is closed in $\U$.
Then $ D^{\leq n} _{\textrm{h}}(Y,\PP/\W)$
and 
$D^{\geq n}_{\textrm{h}}(Y,\PP/\W)$
is the full subcategory of 
$D^{\mathrm{b}}_{\mathrm{h}} (Y,\PP/\W)$
of complexes 
$\E$ such that
$\E |\U 
\in 
D ^{\leq n}  _{\mathrm{h}} (\smash{\D} ^\dag _{\U,\Q})$
(resp. $\E |\U 
\in 
D ^{\geq n}  _{\mathrm{h}} (\smash{\D} ^\dag _{\U,\Q})$),
where the t-structure on 
$D ^\mathrm{b} _{\mathrm{h}} (\smash{\D} ^\dag _{\U,\Q})$
is the canonical one. 
The heart of this t-structure, the category of overholonomic modules on $(Y,\PP/\W)$ after any base change, 
will be denoted by
$\mathrm{H} (Y,\PP/\W)$.
Finally, we denote by 
$\mathcal{H} ^i _{\mathrm{t}}$
the $i$th space of cohomology with respect to this canonical t-structure. 

\end{ntn}

\begin{dfn}
\label{IsocSigmapre}
Let $(Y,X, \PP)$ be a frame over $\W$ with $Y$ smooth. 

\begin{enumerate}

\item 
\label{IsocSigma1pre}
Choose 
$\U$ an open set of $\PP$ such that 
$Y$ is closed in $\U$. 
Let $\E \in \mathrm{H} (Y,\PP/\W)$ (see the notation of \ref{ntn-Holpre}).
We say that $\E$ is an {\it overholonomic after any base change isocrystals} on $(Y,\PP/\W)$
if $\E | \U \in \mathrm{Isoc} ^{\dag \dag}(Y, \U/\W)$ (see the notation of \ref{Isodagdagdiv}, remark
we use the case where the divisor is empty).
We denote by 
$\textrm{H-Isoc} ^{\dag \dag}(Y,\PP/\W)$ 
the full subcategory of 
$\mathrm{H} (Y,\PP/\W)$
whose objects are overholonomic after any base change isocrystals on $(Y,\PP/\W)$.

\item 
Let
$D^{\mathrm{b}}_{\textrm{h-isoc}} (Y,\PP/\W)$
be the full subcategory of 
$D^{\mathrm{b}}_{\mathrm{h}} (Y,\PP/\W)$
of the objects $\E$ such that,
for any integer $i$, the module 
$\mathcal{H} ^i _{\mathrm{t}}( \E ) \in \textrm{H-Isoc} ^{\dag \dag}(Y,\PP/\W)$,
where $\mathcal{H} ^i _{\mathrm{t}}$ means the $i$th spaces of cohomology with respect to the canonical t-structure 
(see the notation of \ref{ntn-Holpre}). 
The canonical t-structure on 
$D^{\mathrm{b}}_{\textrm{h}} (Y,\PP/\W)$
induces canonically another one on 
$D^{\mathrm{b}}_{\textrm{h-isoc}} (Y,\PP/\W)$.
For any integer $n \in \Z$, 
we get the subcategories
$D^{\leq n}_{\textrm{h-isoc}}(Y,\PP/\W) := D^{\mathrm{b}}_{\textrm{h-isoc}}(Y,\PP/\W) \cap D^{\leq n}_{\textrm{h}}(Y,\PP/\W)$
and 
$D^{\geq n}_{\textrm{h-isoc}}(Y,\PP/\W) := D^{\mathrm{b}}_{\textrm{h-isoc}}(Y,\PP/\W) \cap D^{\geq n}_{\textrm{h}}(Y,\PP/\W)$.

\item We denote by 
$\underrightarrow{LD} ^{\mathrm{b}} _{\Q,  \textrm{h-isoc}}(Y,\PP/\W)$
the full subcategory of 
$\smash{\underrightarrow{LD}} ^{\mathrm{b}} _{\Q,\mathrm{coh}} ( \smash{\widehat{\D}} _{\PP} ^{(\bullet)})$
of objects
$\E ^{(\bullet)}$ such that
$\underrightarrow{\lim}~
\E ^{ (\bullet)}
\in 
D^{\mathrm{b}}_{\textrm{h-isoc}} (Y,\PP/\W)$.

\end{enumerate}
\end{dfn}

\begin{empt}
\label{dev-h2hisco}
Let $(Y,X, \PP)$ be a frame over $\W$. 
Let 
$\E ^{(\bullet)}\in \underrightarrow{LD} ^{\mathrm{b}} _{\Q,  \textrm{h}}(Y,\PP/\W)$.
Then there exists a smooth $d$-stratification 
$(Y _{i}) _{i=1, \dots , r}$ of $Y$ in $P$ (see Definition \cite[4.1.2.2]{caro-stab-prod-tens}) 
such that we have
$ \R \underline{\Gamma} ^\dag _{Y _i}  (\E ^{(\bullet)}) 
\in 
\underrightarrow{LD} ^{\mathrm{b}} _{\Q,  \textrm{h-isoc}} (Y _i,\PP/\W)$,
for any for any $i =1,\dots, r$ (see \cite[6.2.3]{caro-pleine-fidelite}).
\end{empt}

\subsection{Formalism of Grothendieck six operations over couples}

\begin{thm}
[Independence of the frame enclosing a couple]
\label{ind-CYW}
Let $\mathfrak{C}$ be a data of coefficients over $\V$
which contains $\mathfrak{B} _\mathrm{div}$, 
which is 
stable  under devissage, 
realizable pushforwards, extraordinary pullbacks,
and under local cohomological functors.
Let $\theta= (id,a,f) \colon (Y, X', \PP')\to (Y,X,\PP)$ be a complete morphism of frames over $\W$.

The functors
$\R \underline{\Gamma} ^{\dag} _{Y} f ^!$
and 
$f _+$ induce quasi-inverse equivalences of categories between 
$\mathfrak{C} (Y , \PP /\W)$
and 
$\mathfrak{C} (Y , \PP '/\W)$ (recall notation \ref{ntn-6operations}).
\end{thm}

\begin{proof}
Using the stability properties that $\mathfrak{C}$ satisfies, 
we check that the functors 
$f _+\colon 
\mathfrak{C} (Y , \PP' /\W)
\to 
\mathfrak{C} (Y , \PP /\W)$
and
$\R \underline{\Gamma} ^{\dag} _{Y} f ^!
\colon 
\mathfrak{C} (Y , \PP /\W)
\to 
\mathfrak{C} (Y , \PP '/\W)$
are well defined. 
Since $\mathfrak{C}$ is included in $\smash{\underrightarrow{LD}} ^{\mathrm{b}} _{\Q,\mathrm{ovcoh}} $, 
then this is a straightforward consequence of Theorem \ref{ind-ovcoh}.
\end{proof}

\begin{lem}
\label{ind-cat-overcouples}
Let $\mathfrak{C}$ be a data of coefficients over $\V$
which contains $\mathfrak{B} _\mathrm{div}$, 
which is stable  under devissage, 
realizable pushforwards, extraordinary pullbacks,
and local cohomological functors.
Let $\mathbb{Y}:= (Y,X)$ be a couple over $\W$. Choose a frame of the form $(Y, X,\PP)$. 
The category 
$\mathfrak{C} (Y, \PP/\W)$
 does not depend, up to a canonical equivalence of categories, 
on the choice of the frame $(Y,X, \PP)$ over $\W$ enclosing $(Y,X)$.
Hence, we can simply write 
$\mathfrak{C} (\mathbb{Y}/\W)$ 
instead of 
$\mathfrak{C} (Y, \PP/\W)$
without ambiguity (up to canonical equivalence of categories).
\end{lem}

\begin{proof}
Let $(Y,X, \PP _1)$ and $(Y,X, \PP _2)$ be
two frames over $\W$ enclosing $(Y,X)$.
The closed immersions 
$X \hookrightarrow \PP _1$
and
$X \hookrightarrow \PP _2$
induce
$X \hookrightarrow \PP _1 \times \PP _2$.
Denoting by 
$\pi _1\colon \PP _1 \times \PP _2 \to \PP _1$
and 
$\pi _2\colon \PP _1 \times \PP _2 \to \PP _1$
the structural projections, 
we get two morphisms of frames over $\W$ of the form
$(id,id, \pi _1) \colon 
(Y,X, \PP _1 \times \PP _2)
\to 
(Y,X, \PP _1)$
and 
$(id,id, \pi _2) \colon 
(Y,X, \PP _1 \times \PP _2)
\to 
(Y,X, \PP _2)$.
From \ref{ind-CYW}, 
the functors 
$\pi _{2 +} \R \underline{\Gamma} ^{\dag} _{Y} \pi _1 ^! $
and
$ \pi _{1+} \R \underline{\Gamma} ^{\dag} _{Y} \pi _2 ^!$
are quasi-inverse equivalences of categories 
between 
$\mathfrak{C} (Y, \PP_1/\W)$
and 
$\mathfrak{C} (Y, \PP _2/\W)$.
\end{proof}

\begin{lem}
\label{ind-dual}
Let $\mathfrak{C}$ be a data of coefficients over $\V$
which contains $\mathfrak{B} _\mathrm{div}$, 
which is stable  under devissage, 
realizable pushforwards, extraordinary pullbacks, 
local cohomological functors, and duals.
Let $\mathbb{Y}:= (Y,X)$ be a couple over $\W$. 
 Choose a frame of the form $(Y, X,\PP)$. 
The functor 
$\R \underline{\Gamma} ^\dag _{Y} \DD _{\PP} 
\colon 
\mathfrak{C} (Y,\PP/\W) \to \mathfrak{C} (Y,\PP/\W)$
does not depend,
up to canonical isomorphism of 
\ref{ind-cat-overcouples} (more precisely, we have the commutative diagram
\ref{ind-dual-diag1} up to canonical isomorphism), 
on the choice of the frame enclosing $(Y,X)$. 
Hence, 
we will denote by 
$\DD _{\mathbb{Y}}
\colon \mathfrak{C} (\mathbb{Y}/\W) \to \mathfrak{C} (\mathbb{Y}/\W)$
the functor 
$\R \underline{\Gamma} ^\dag _{Y} \DD _{\PP}$. 
\end{lem}

\begin{proof}
As in the beginning of the proof, 
\ref{ind-cat-overcouples},
let $(Y,X, \PP _1)$ and $(Y,X, \PP _2)$ be
two frames over $\W$ enclosing $(Y,X)$.
Let $\pi _1\colon \PP _1 \times \PP _2 \to \PP _1$
and 
$\pi _2\colon \PP _1 \times \PP _2 \to \PP _2$
be the structural projections.
We have to check that the diagram
\begin{equation}
\label{ind-dual-diag1}
\xymatrix{
{\mathfrak{C} (Y, \PP_1/\W)} 
\ar[r] _-{\R \underline{\Gamma} ^{\dag} _{Y} \pi _1 ^! } ^-{\cong}
\ar[d] _-{\R \underline{\Gamma} ^\dag _{Y} \DD _{\PP _1} }
&
{\mathfrak{C} (Y, \PP_1\times \PP _2/\W)} 
\ar[r] _-{\pi _{2 +} } ^-{\cong}
\ar[d] ^-{\R \underline{\Gamma} ^\dag _{Y} \DD _{\PP _1 \times \PP _2} }
& 
{\mathfrak{C} (Y, \PP_2/\W)} 
\ar[d] ^-{\R \underline{\Gamma} ^\dag _{Y} \DD _{\PP _2} }
\\ 
{\mathfrak{C} (Y, \PP_1/\W) } 
\ar[r] ^-{ \R \underline{\Gamma} ^{\dag} _{Y} \pi _1 ^! } _-{\cong}
&
{\mathfrak{C} (Y, \PP_1\times \PP _2/\W)} 
\ar[r] ^-{\pi _{2 +}} _-{\cong}
& 
{\mathfrak{C} (Y, \PP_2/\W) } 
}
\end{equation}
is commutative, up to canonical isomorphism.
Let 
$\E ^{(\bullet)}
\in 
\mathfrak{C} (Y, \PP_1\times \PP _2/\W) $.
From
\ref{rel-dual-isom}, we have the isomorphism
$\DD _{\PP _2} \pi _{2+} ( \E ^{(\bullet)})
\riso 
\pi _{2+} \DD _{\PP _1 \times \PP _2}  ( \E ^{(\bullet)}) $.
Hence, by applying the functor $\R \underline{\Gamma} ^{\dag} _{Y}$ to this isomorphism, 
we get the first one 
$\R \underline{\Gamma} ^{\dag} _{Y}\DD _{\PP _2} \pi _{2+} ( \E ^{(\bullet)})
\riso 
 \R \underline{\Gamma} ^{\dag} _{Y}\pi _{2+} \DD _{\PP _1 \times \PP _2}  ( \E ^{(\bullet)}) 
\underset{\cite[5.2.8]{caro-stab-sys-ind-surcoh}}{\riso}  
\pi _{2+}  \R \underline{\Gamma} ^{\dag} _{\pi _2 ^{-1}(Y)} \DD _{\PP _1 \times \PP _2}  ( \E ^{(\bullet)}) $.
Since $Y \hookrightarrow\pi _2 ^{-1}(Y)$ is a closed immersion (recall formal schemes are separated by convention), 
then $Y = \overline{Y}\cap \pi _2 ^{-1}(Y)$, where 
$\overline{Y}$ is the closure of $Y$ in $P _1 \times P _2$.
Since  $ \DD _{\PP _1 \times \PP _2}  ( \E ^{(\bullet)}) $
has in support in $\overline{Y}$,
then 
$\R \underline{\Gamma} ^{\dag} _{\pi _2 ^{-1}(Y)} \DD _{\PP _1 \times \PP _2}  ( \E ^{(\bullet)}) 
\riso 
\R \underline{\Gamma} ^{\dag} _{\pi _2 ^{-1}(Y)}
\R \underline{\Gamma} ^{\dag} _{\overline{Y}} \DD _{\PP _1 \times \PP _2}  ( \E ^{(\bullet)}) 
\riso 
\R \underline{\Gamma} ^{\dag} _{Y} \DD _{\PP _1 \times \PP _2}  ( \E ^{(\bullet)}) 
$.
Hence, we have checked the commutativity, up to commutative isomorphism,
of the right square of \ref{ind-dual-diag1}. 
From \ref{ind-CYW},  $\pi _{1+}$ is canonically a quasi-inverse 
of the equivalence of categories $\R \underline{\Gamma} ^{\dag} _{Y} \pi _1 ^! 
\colon 
\mathfrak{C} (Y, \PP_1\times \PP _2/\W)
\cong
\mathfrak{C} (Y, \PP_1/\W) $
(we means that we have canonical isomorphisms
$\pi _{1+} \R \underline{\Gamma} ^{\dag} _{Y} \pi _1 ^! 
\riso id$
and 
$id \riso 
\R \underline{\Gamma} ^{\dag} _{Y} \pi _1 ^! \pi _{1+}$).
Hence, 
we get the commutativity, up to canonical isomorphism,
of the left square of \ref{ind-dual-diag1}. 
\end{proof}

\begin{lem}
\label{ind-pushforward-extinv}
Let $\mathfrak{C}$ be a data of coefficients over $\V$
which contains $\mathfrak{B} _\mathrm{div}$, 
which is stable  under devissage, 
realizable pushforwards, extraordinary pullbacks,
and local cohomological functors.
Let  $u=(b,a)\colon (Y', X') \to (Y, X)$ be a morphism of couples over $\W$.
Put $\mathbb{Y}:= (Y,X)$ and $\mathbb{Y}':= (Y',X')$.
Let us choose a morphism of frames $\theta= (b,a,f) \colon (Y', X', \PP')\to (Y,X,\PP)$ over $\W$ enclosing $u$.

\begin{enumerate}
\item 
The functor $\theta ^! := \R \underline{\Gamma} ^\dag _{Y'} \circ f ^{!}
\colon
\mathfrak{C} (Y,\PP/\W) \to \mathfrak{C} (Y',\PP'/\W)$ 
does not depend on the choice of such $\theta$ enclosing $u$
(up to canonical equivalences
of categories).
Hence, it will be denoted by $u ^{!}
\colon \mathfrak{C} (\mathbb{Y}/\W) \to \mathfrak{C} (\mathbb{Y}'/\W)$. 
\item Suppose that $u$ is complete, i.e. that $a \colon X' \to X$ is proper. 
The functor $\theta _{+}:= f _+
\colon 
\mathfrak{C} (Y',\PP'/\W) \to \mathfrak{C} (Y,\PP/\W)$.
does not depend on the choice of such $\theta$ enclosing $u$
(up to canonical equivalences
of categories).
Hence, it will be denoted by 
$u _{+} \colon \mathfrak{C} (\mathbb{Y}'/\W) \to \mathfrak{C} (\mathbb{Y}/\W)$.
\end{enumerate}

\end{lem}

\begin{proof}
To check the first assertion, 
we proceed as in the proof of 
\ref{ind-dual}
(use also the commutation of local cohomological functors with extraordinary inverse images 
given in \cite[5.2.8]{caro-stab-sys-ind-surcoh}).
Let us check that the functor 
$f _+
\colon 
\mathfrak{C} (Y',\PP'/\W) \to \mathfrak{C} (Y,\PP/\W)$
is well defined.
Let $\E ^{(\bullet)} \in \mathfrak{C} (Y',\PP'/\W) $.
Since $a$ is proper, then 
$f _+ (\E ^{(\bullet)} ) \in \mathfrak{C} (\PP') $.
We compute 
$\R \underline{\Gamma} ^\dag _{Y}f _+ (\E ^{(\bullet)} ) 
\riso 
f _+ \R \underline{\Gamma} ^\dag _{ f ^{-1}Y} (\E ^{(\bullet)} ) $.
Since $Y'$ is included in $f ^{-1}Y$ and 
$\E ^{(\bullet)} \in \mathfrak{C} (Y',\PP'/\W) $,
then 
$\R \underline{\Gamma} ^\dag _{ f ^{-1}Y} (\E ^{(\bullet)} ) 
\riso 
\R \underline{\Gamma} ^\dag _{Y} (\E ^{(\bullet)} ) $.
Hence, 
$\R \underline{\Gamma} ^\dag _{Y}f _+ (\E ^{(\bullet)} ) 
\riso f _+ (\E ^{(\bullet)} )$, which implies that
$f _+ (\E ^{(\bullet)} ) \in  \mathfrak{C} (Y,\PP/\W)$.
To check that the functor does not depend on the choice of $\theta$ enclosing $u$, 
we proceed as in the proof of 
\ref{ind-dual}.
\end{proof}

\begin{lem}
\label{ind-prod-tensor}
Let $\mathfrak{C}$ be a data of coefficients over $\V$
which contains $\mathfrak{B} _\mathrm{div}$, 
which is stable under devissage, 
realizable pushforwards, extraordinary pullbacks, 
and tensor products.
Let $\mathbb{Y}:= (Y,X)$ be a couple over $\W$. 
 Choose a frame of the form $(Y, X,\PP)$. 
The bifunctor $-\smash{\overset{\L}{\otimes}}   ^{\dag}
_{\O  _{\PP}} - [-\dim P]
\colon
\mathfrak{C} (Y,\PP/\W) \times \mathfrak{C} (Y,\PP/\W) \to \mathfrak{C} (Y,\PP/\W)$
does not depend, up to the canonical equivalences of categories of \ref{ind-cat-overcouples}, 
on the choice of the frame enclosing $(Y,X)$.
It will be denoted by 
$\widetilde{\otimes} _{\mathbb{Y}}
\colon 
\mathfrak{C} (\mathbb{Y}/\W) \times \mathfrak{C} (\mathbb{Y}/\W) \to \mathfrak{C} (\mathbb{Y}/\W)$. 
\end{lem}

\begin{proof}
From Lemmas \ref{S(D,C)stability3} and \ref{S(D,C)stability3-pre}, 
the data of coefficients 
$\mathfrak{C}$  is also stable under local cohomological functors.
From \cite[2.1.9.1]{caro-stab-prod-tens} 
(resp. \ref{localfunct-tensprod}), 
extraordinary inverse images 
(resp. local cohomological functors) 
commute with tensor products (up to a shift).
Proceeding as in the proof of \ref{ind-dual} with its notation,
$\R \underline{\Gamma} ^{\dag} _{Y} \pi _1 ^! $ 
and 
$\R \underline{\Gamma} ^{\dag} _{Y} \pi _2 ^! $ 
commute with tensor products
and then so are 
$\pi _{1+}$ and $\pi _{2+}$.

\end{proof}

\begin{empt}
[Formalism of Grothendieck six operations]
\label{6operations}
Let $\mathfrak{C}$ be a data of coefficients over $\V$
which contains $\mathfrak{B} _\mathrm{div}$, 
which is 
stable  under devissage, 
realizable pushforwards, extraordinary pullbacks, 
duals,
and tensor products.
To sum-up the above Lemmas
we can define a formalism of Grothendieck operations on couples 
as follows.
Let  $u=(b,a)\colon (Y', X') \to (Y, X)$ be a morphism of couples over $\W$.
Put $\mathbb{Y}:= (Y,X)$ and $\mathbb{Y}':= (Y',X')$.
\begin{enumerate}

\item We have the dual functor $\DD _{\mathbb{Y}}
\colon \mathfrak{C} (\mathbb{Y}/\W) \to \mathfrak{C} (\mathbb{Y}/\W)$ (see \ref{ind-dual}).

\item 
We have the extraordinary pullback $u ^{!}
\colon
\mathfrak{C} (\mathbb{Y}/\W) \to \mathfrak{C} (\mathbb{Y}'/\W)$ (see \ref{ind-pushforward-extinv}).
We get the pullbacks $u ^{+}:= \DD _{\mathbb{Y}'} \circ u ^{!} \circ \DD _{\mathbb{Y}}$.

\item Suppose that $u$ is complete.
Then, we have the functor $u _+
\colon \mathfrak{C} (\mathbb{Y}'/\W) \to \mathfrak{C} (\mathbb{Y}/\W)$ (see \ref{ind-pushforward-extinv}) 
We denote by  $u _{!}:= \DD _{\mathbb{Y}} \circ u _{+} \circ \DD _{\mathbb{Y}'}$, 
the extraordinary pushforward by $u$.

\item 
We have the tensor product 
$-\widetilde{\otimes} _{\mathbb{Y}}-
\colon 
\mathfrak{C} (\mathbb{Y}/\W) \times \mathfrak{C} (\mathbb{Y}/\W) \to \mathfrak{C} (\mathbb{Y}/\W)$
(see \ref{ind-prod-tensor})
\end{enumerate}

\end{empt}

\begin{ex}
\label{nota-h-ovhol}
 We recall
the data of coefficients $\smash{\underrightarrow{LD}} ^{\mathrm{b}} _{\Q,\mathrm{ovhol}}$
and $\smash{\underrightarrow{LD}} ^{\mathrm{b}} _{\Q,\mathrm{h}}$.
are defined respectively in \ref{ex-cst-surcoh}.\ref{hstab} and \ref{ex-cst-surcoh}.\ref{ovholstab}.
Using Lemmas \ref{lem-stabextpullback} and \ref{S(D,C)stability} (and Remark \ref{rem-div-cst}) are stable 
under local cohomological functors, 
realizable pushforwards, extraordinary pullbacks, and duals.
Hence, 
with the notation 
\ref{ind-cat-overcouples},
using Lemmas \ref{ind-pushforward-extinv}, \ref{ind-prod-tensor}, and \ref{ind-dual},
for any frame 
$(Y,X, \PP)$ over $\W$,
we get the categories of the form 
$\smash{\underrightarrow{LD}} ^{\mathrm{b}} _{\Q,\mathrm{h}} (Y, \PP /\W)$,
$\smash{\underrightarrow{LD}} ^{\mathrm{b}} _{\Q,\mathrm{h}}(\mathbb{Y}/\W)$,
$\smash{\underrightarrow{LD}} ^{\mathrm{b}} _{\Q,\mathrm{ovhol}} (Y, \PP /\W)$
or
$\smash{\underrightarrow{LD}} ^{\mathrm{b}} _{\Q,\mathrm{ovhol}}(\mathbb{Y}/\W)$
endowed with five of Grothendieck cohomological operations (the tensor product is a priori missing).
We keep in this context the notation \ref{6operations}.1--3 concerning
these five functors

\end{ex}

\begin{ntn}
\label{ntn-Hol}
Let $(Y,X, \PP)$ be a frame over $\W$. 
\begin{enumerate}
\item With the notation of  \ref{ntn-Holpre},
the category 
$D^{\mathrm{b}}_{\mathrm{h}} (Y,\PP/\W)$
does not depend on the choice of the frame $(Y,X, \PP)$ enclosing the couple $\mathbb{Y}:=(Y,X)$
(up to canonical equivalences of categories).
Hence, it will be denoted by
$D^{\mathrm{b}}_{\mathrm{h}} (\mathbb{Y}/\W)$ without any ambiguity.

\item 
\label{ntn-Hol3} 
From \ref{ntn-Holpre},
there is a canonical t-structure on 
$D^{\mathrm{b}}_{\mathrm{h}} (Y,\PP/\W)$.
Using \ref{ind-ovcoh}, this t-structure is independent on the choice of the frame
$(Y,X, \PP)$
enclosing 
$\mathbb{Y}:=(Y,X)$. 
Hence, we get a canonical t-structure on 
$D^{\mathrm{b}}_{\mathrm{h}} (\mathbb{Y}/\W)$,
whose heart, the category of overholonomic modules on $\mathbb{Y}/\W$ after any base change, is denoted by
$\mathrm{H} (\mathbb{Y}/\W)$.
Finally, we denote by 
$\mathcal{H} ^i _{\mathrm{t}}$
the $i$th space of cohomology with respect to this canonical t-structure. 
With this canonical t-structure, for any integer $n \in \Z$, 
we get the subcategories
$ D^{\leq n}_{\textrm{h}}(\mathbb{Y}/\W)$
and 
$D^{\geq n}_{\textrm{h}}(\mathbb{Y}/\W)$.

\item From \ref{IsocSigmapre}, we have 
a canonical t-structure on 
$D^{\mathrm{b}}_{\textrm{h-isoc}} (Y,\PP/\W)$
such that the inclusion 
$D^{\mathrm{b}}_{\textrm{h-isoc}} (Y,\PP/\W) \subset D^{\mathrm{b}}_{\mathrm{h}} (Y,\PP/\W)$
preserves  t-structures. 
Using Lemma \cite[5.4.1.1]{caro-pleine-fidelite}, 
this t-structure is independent (up the canonical equivalence of categories of the type of Theorem \ref{ind-ovcoh}) on the choice of the frame
$(Y,X, \PP)$
enclosing 
$\mathbb{Y}:=(Y,X)$. 
Hence, we get a canonical t-structure on 
$D^{\mathrm{b}}_{\textrm{h-isoc}} (\mathbb{Y}/\W)$,
whose heart, the category of overholonomic  after any base change isocrystals on $\mathbb{Y}/\W$, is denoted by
$\textrm{H-Isoc} ^{\dag \dag}(\mathbb{Y}/\W)$.
With the notation of \ref{ntn-Hol}.\ref{ntn-Hol3}, 
for any integer $n \in \Z$, 
we get the subcategories
$D^{\leq n}_{\textrm{h-isoc}}(\mathbb{Y}/\W) := D^{\mathrm{b}}_{\textrm{h-isoc}}(\mathbb{Y}/\W) \cap D^{\leq n}_{\textrm{h}}(\mathbb{Y}/\W)$
and 
$D^{\geq n}_{\textrm{h-isoc}}(\mathbb{Y}/\W) := D^{\mathrm{b}}_{\textrm{h-isoc}}(\mathbb{Y}/\W) \cap D^{\geq n}_{\textrm{h}}(\mathbb{Y}/\W)$.
Finally, we denote by
$\underrightarrow{LD} ^{\mathrm{b}} _{\Q,  \textrm{h-isoc}}(\mathbb{Y}/\W)$
the full subcategory of 
$\smash{\underrightarrow{LD}} ^{\mathrm{b}} _{\Q,\mathrm{coh}} ( \smash{\widehat{\D}} _{\PP} ^{(\bullet)})$
of objects
$\E ^{(\bullet)}$ such that
$\underrightarrow{\lim}~
\E ^{ (\bullet)}
\in 
D^{\mathrm{b}}_{\textrm{h-isoc}} (\mathbb{Y}/\W)$.

\end{enumerate}

\end{ntn}

\subsection{Formalism of Grothendieck six operations over realizable varieties}

\begin{dfn}
[Proper compactification]
\begin{enumerate}
\item A frame $(Y,X,\PP)$ over $\W$ is said to be {\it proper} if $\PP$ is proper. 
The category of proper frames over $\W$ is the subcategory of the category
of frames over $\W$ whose objects are proper frames over $\W$.

\item The category of {\it proper couples} over $\W$ is the full subcategory of 
the category of couples over $\W$ whose objects $(Y, X)$ are such that 
$X$ is proper. 
We remark that if $(Y, X)$ is a proper couple over $\W$  then 
there exists a proper frame over $\W$ of the form $(Y,X, \PP)$.

\item A {\it realizable variety} over $\W$ is a $l$-scheme $Y$ such that there exists a proper frame of the form 
$(Y,X,\PP)$. For such frame $(Y,X,\PP)$, we say that 
the proper frame $(Y,X,\PP)$ encloses $Y$ or that the proper couple $(Y,X)$ encloses $Y$. 
\end{enumerate}

\end{dfn}

\begin{empt}
[Formalism of Grothendieck six operations]
\label{6operations-variety}
Let $\mathfrak{C}$ be a data of coefficients over $\V$
which contains $\mathfrak{B} _\mathrm{div}$, 
which is 
stable  under devissage, 
realizable pushforwards, extraordinary pullbacks, 
duals,
and tensor products.
Similarly to Lemma \ref{ind-cat-overcouples}, we check using Theorem \ref{ind-CYW}
that the category 
$\mathfrak{C} (Y, \PP/\W)$
(resp. $\mathfrak{C} (Y,X/\W)$)
 does not depend, up to a canonical equivalence of categories, 
on the choice of the proper frame $(Y,X, \PP)$ (resp. the proper couple $(Y,X)$) over $\W$ enclosing $Y$.
As for \ref{6operations}, we can define a formalism of Grothendieck six operations on realizable varieties 
as follows.
Let  $u\colon Y'\to Y$ be a morphism of realizable varieties over $\W$.
\begin{enumerate}

\item We have the dual functor $\DD _{Y}
\colon \mathfrak{C} (Y/\W) \to \mathfrak{C} (Y/\W)$ (see \ref{ind-dual}).

\item 
We have the extraordinary pullback $u ^{!}
\colon
\mathfrak{C} (Y/\W) \to \mathfrak{C} (Y'/\W)$ (see \ref{ind-pushforward-extinv}).
We get the pullbacks $u ^{+}:= \DD _{Y'} \circ u ^{!} \circ \DD _{Y}$.

\item 
We have the functor $u _+
\colon \mathfrak{C} (Y'/\W) \to \mathfrak{C} (Y/\W)$ (see \ref{ind-pushforward-extinv}) 
We denote by  $u _{!}:= \DD _{Y} \circ u _{+} \circ \DD _{Y'}$, 
the extraordinary pushforward by $u$.

\item 
We have the tensor product 
$-\widetilde{\otimes} _{Y}-
\colon 
\mathfrak{C} (Y/\W) \times \mathfrak{C} (Y/\W) \to \mathfrak{C} (Y/\W)$
(see \ref{ind-prod-tensor})
\end{enumerate}

\end{empt}

\subsection{Constructible t-structure for overholonomic complexes after any base change}

For completeness (this will not be useful in this paper), 
we extend Tomoyuki Abe's definition of constructibility in the context of
overholonomic complexes after any base change by introducing a new way of defining it (i.e. by devissage).

\begin{empt}
[Constructible t-structure]
\label{t-structure}

Let $\mathbb{Y}:= (Y,X)$ be a couple. Choose a frame $(Y, X,\PP)$. 
If $Y' \rightarrow Y$ is an immersion, then 
we denote by 
$i _{Y'} \colon (Y', X', \PP) \to  (Y, X, \PP)$ the induced morphism
where $X'$ is the closure of $Y'$ in $X$. 
We define on $D^{\mathrm{b}}_{\mathrm{h}}(\mathbb{Y}/\W)$
the constructible t-structure as follows. 
\begin{enumerate}
\item An object $\E\in D^{\mathrm{b}}_{\mathrm{h}}(\mathbb{Y}/\W) $
belongs to 
$D^{c, \geq 0}_{\mathrm{h}}(\mathbb{Y}/\W)$ if there exists a smooth stratification 
(see Definition \cite[2.2.1]{Abe-Caro-weights})
$(Y _{i}) _{i=1, \dots , r}$ of $Y$ such that 
for any $i $, the complex $i _{Y _i} ^{+} (\E ) [ d _{Y _i}]$ (see notation \ref{nota-h-ovhol})
belongs to
$D^{\geq 0}_{\textrm{h-isoc}}(Y _i,\PP/\W)$.

\item An object $\E\in D^{\mathrm{b}}_{\mathrm{h}}(\mathbb{Y}/\W) $
belongs to 
$D^{c, \leq 0}_{\mathrm{h}}(\mathbb{Y}/\W)$ if there exists a smooth stratification 
$(Y _{i}) _{i=1, \dots , r}$ of $Y$ such that 
for any $i $, the complex $i _{Y _i} ^{+} (\E )  [ d _{Y _i}]$ belongs to
$D^{\leq 0}_{\textrm{h-isoc}}(Y _i,\PP/\W)$.

\end{enumerate}

\end{empt}

\begin{prop}
Let $\mathbb{Y}:= (Y,X)$ be a couple.
\begin{enumerate}

\item Let $\E '\to \E \to \E'' \to \E' [1]$ be an exact triangle in $D^{\mathrm{b}}_{\mathrm{h}}(\mathbb{Y}/\W) $. 
If $\E'$ and $\E''$are in $D^{c, \geq 0}_{\mathrm{h}}(\mathbb{Y}/\W)$
(resp. $D^{c, \leq 0}_{\mathrm{h}}(\mathbb{Y}/\W)$)
then so is $\E$.

\item 
Suppose that $Y$ is smooth. 
Let $\E\in D^{\mathrm{b}}_{\textrm{h-isoc}}(\mathbb{Y}/\W) $.
Then $\E \in D^{c, \geq 0}_{\mathrm{h}}(\mathbb{Y}/\W)$
(resp. $\E \in D^{c, \leq 0}_{\mathrm{h}}(\mathbb{Y}/\W)$)
if and only if 
$\E \in D^{\geq d _X}_{\textrm{h-isoc}}(\mathbb{Y}/\W)$
(resp. $\E \in D^{\leq d _X}_{\textrm{h-isoc}}(\mathbb{Y}/\W)$). 

\end{enumerate}
\end{prop}

\begin{proof}
The proof of the first part is similarly to \ref{trisub}.
The second part is easy.
\end{proof}

\begin{rem}
Let $\mathbb{Y}:= (Y,X)$ be a proper couple.
Then, the categories 
$D^{c, \geq 0}_{\mathrm{h}}(\mathbb{Y}/\W)$
and 
$D^{c, \leq 0}_{\mathrm{h}}(\mathbb{Y}/\W)$
only depend on $Y$ and can be simply denoted by 
$D^{c, \geq 0}_{\mathrm{h}}(Y/\W)$
and 
$D^{c, \leq 0}_{\mathrm{h}}(Y/\W)$.
This constructible t-structure 
is compatible with that defined by Tomoyuki Abe in 
\cite[1.3.1]{Abe-LanglandsIsoc}
(more precisely, one can check that, 
if we restrict to the categories denoted there by 
$D^{\mathrm{b}}_{\mathrm{hol}}(Y/\W)$, 
we get Tomoyuki Abe's definition of constructibility).
Indeed, let $\E \in D^{\geq 0}_{\textrm{h-isoc}}(Y/\W)$.
For any immersion $i$ of realizable varieties,  
Tomoyuki Abe's definition of $D^{c, \geq 0}_{\mathrm{hol}}$
and of $D^{c, \leq 0}_{\mathrm{hol}}$ are stable by $i ^{+} $ and under $i _{!}$. 
Since this property is obvious with the definition of \ref{t-structure}, by devissage in overconvergent isocrystals,
we reduce to the case where there exist a smooth subvariety $Z$ of $Y$ and an object 
$\G \in D^{\geq 0}_{\textrm{h-isoc}}(Z/\W)$ 
such that 
$\E=i _{Z!} (\G)$.
In that case, this is clear that both definitions of $D^{\geq 0}_{\textrm{h-isoc}}(Y/\W)$ are the same.
We proceed in the same way for $D^{\leq 0}_{\textrm{h-isoc}}(Y/\W)$.
\end{rem}

\section{Around unipotence}

\subsection{$\Sigma$-unipotent monodromy}
Let $\W$ be an object of $\mathrm{DVR}  (\V)$ and $l$ be its residue field.

\begin{empt}
\label{IsocdivSigma}
Let $(Y,X, \PP)$ be a frame over $\W$. 
We suppose that $X$ is $l$-smooth, $Z:=X - Y$ is a simple normal crossing divisor of $X$ 
and that there exists a divisor  $T$ of $P$ such that $Z=X \cap T$.
Let 
$Z = \cup _{i=1} ^{r} Z _i$ be the decomposition of $Z$ into irreducible components. 
We denote by 
$\mathrm{Isoc} ^{\dag} _\Sigma (\PP, T, X/\W)$, 
the full subcategory of Berthelot's category of overconvergent isocrystals on $(\PP, T, X/\W)$ (see Notation \ref{Isodagdagdiv}) of 
isocrystals on $(Y,X/\W)$ having  $\Sigma ^{r}$-unipotent monodromy according to Shiho's definition \cite[3.9]{Shiho-logextension} (and its remark).
We denote by 
$\textrm{Isoc} _ \Sigma ^{\dag \dag}(\PP, T, X/\W)$ the full subcategory of 
$\textrm{Isoc}  ^{\dag \dag}(\PP, T, X/\W)$
such that the equivalence of categories 
\ref{eqcat-isocpre}
induces 
the following one 
\begin{equation}
\label{eqcat-isoc}
\sp _{+}
\colon
\mathrm{Isoc} _\Sigma ^{\dag}(\PP, T, X/\W)
\cong 
\mathrm{Isoc} ^{\dag \dag} _\Sigma (\PP, T, X/\W).
\end{equation}
We denote by $\mathrm{Isoc} _\Sigma ^{(\bullet)} (\PP, T, X/\W)$
the full subcategory of 
$\mathrm{Isoc} ^{(\bullet)} (\PP, T, X/\W)
\subset \smash{\underrightarrow{LM}}  _{\Q,\mathrm{coh}} ( \smash{\widehat{\D}} _{\PP} ^{(\bullet)} (T))$
(see Notation \cite[2.2.4]{caro-stab-sys-ind-surcoh})
of objects
$\E ^{(\bullet)}$ such that
$\underrightarrow{\lim}~
\E ^{ (\bullet)}
\in 
\mathrm{Isoc} _\Sigma ^{\dag \dag}(\PP, T, X/\W)$.
Since the equivalence of categories
\ref{limeqcat} is still valid by adding overconvergent singularities along a divisor (i.e. see \cite[2.2.4.2]{caro-stab-sys-ind-surcoh}),
then 
we get the equivalence of categories
\begin{equation}
\label{lim-Isoc-Sigma}
\underrightarrow{\lim}
\colon 
\mathrm{Isoc} _\Sigma ^{(\bullet)} (\PP, T, X/\W)
\cong
\mathrm{Isoc} _\Sigma ^{\dag \dag}(\PP, T, X/\W).
\end{equation}

\end{empt}

\begin{lem}
\label{ovhol-Sigma}
We keep the notation and hypotheses of \ref{IsocdivSigma}. 
The category 
$\mathrm{Isoc} ^{\dag \dag} _\Sigma (\PP, T, X/\W)$
is an abelian subcategory of 
$\textrm{H-Isoc} ^{\dag \dag}(Y,\PP/\W)$ (see Notation \ref{IsocSigmapre}.1)
stable under extension.
\end{lem}

\begin{proof}
Let $\E \in \mathrm{Isoc} ^{\dag \dag} _\Sigma (\PP, T, X/\W)$.
Since the property that $\E$ belongs to a category of the form 
$\mathrm{Isoc} ^{\dag \dag} _\Sigma (\PP, T, X/\W)$
is stable under base change, 
then we reduce to  check that $\E$ is an overholonomic $\D ^\dag _{\PP,\Q}$-module. 
Let $E \in \mathrm{Isoc} _\Sigma ^{\dag}(\PP, T, X/\W)$ such that
 $\sp _{+} (E) \riso \E$ (see \ref{eqcat-isoc}).
 Since this local in $\PP$, 
 we can suppose $\PP$ affine, that there exists a closed immersion of smooth formal schemes over $\W$ of the form 
$\X \hookrightarrow \PP$ which is a lifting of $X \hookrightarrow P$, and that  
 there exists a strict normal crossing divisor
 $\ZZ$ of $\X$ which lifts $Z$. 
By using Berthelot-Kashiwara theorem (see \cite[5.3.6]{caro-stab-sys-ind-surcoh}), 
 we reduce to the case where $X=P$. 
 Let $\sp \colon \X _K \to \X$ be the specialisation morphism
 from the rigid analytic space associated to $\X$ (also called Raynaud generic fiber of $\X$) to $\X$.
From Theorem \cite[3.16]{Shiho-logextension} (or better Remark \cite[3.17]{Shiho-logextension}), 
there exists a convergent isocrystal $G$	
on the log scheme $(X , M _{Z})$ over $\W$, where $M_Z$ is the log-structure corresponding to the  strict normal crossing divisor $Z$ of $X$, 
with exponents in $\tau (\Sigma)$
such that $j ^\dag (G) \riso E$, where $j\colon Y \to X$ is the open immersion.
From \cite[2.3.13]{caro-Tsuzuki}, since by hypothesis the elements of the group $\Sigma$
are $p$-adically non Liouville numbers, then 
$u _+ \sp _* (G)$ is overholonomic, where 
$u \colon (\X , M _{\ZZ}) \to \X$ is the canonical morphism.  
Since $\E \riso (\hdag Z) u _+ \sp _* (G)$, and since overholonomicity is stable under
$(\hdag Z)$ then $\E$ is also overholonomic.

The stability under extension is clear by definition (see Definition \cite[1.3]{Shiho-logextension}) and the fact that 
$\mathrm{Isoc} ^{\dag \dag} _\Sigma (\PP, T, X/\W)$
is an abelian subcategory of 
$\textrm{H-Isoc} ^{\dag \dag}(Y,\PP/\W)$ follows from  \cite[1.17]{Shiho-logextension}.
\end{proof}

\begin{dfn}
\label{dfnSunip}
Let $(Y,X, \PP)$ be a frame over $\W$. 
We suppose that $X$ is $l$-smooth, $Z:=X - Y$ is a simple normal crossing divisor of $X$. 
We put $\mathbb{Y}:= (Y,X)$.
\begin{enumerate}

\item Let $\E \in \textrm{H-Isoc} ^{\dag \dag}(Y,\PP/\W)$ (see Notation \ref{IsocSigmapre}.\ref{IsocSigma1pre}).
We will say that $\E$ has ``$\Sigma$-unipotent monodromy'' if 
for any open set $\PP'$ of $\PP$ such that there exists a divisor $T'$ of $P'$ satisfying
$Z \cap P' = X \cap T'$, we have 
$\E |\PP' \in \mathrm{Isoc} ^{\dag \dag} _\Sigma (\PP', T', X \cap P'/\W)$ (see Notation \ref{IsocdivSigma}).

\item We denote by 
$\mathrm{Isoc} ^{\dag \dag} _{\Sigma} (Y,\PP/\W)$
the full subcategory of 
$ \textrm{H-Isoc} ^{\dag \dag}(Y,\PP/\W)$
whose objects ``have $\Sigma$-unipotent monodromy''. 
We remark that Lemma \ref{ovhol-Sigma} justifies the fact that
we remove ``H'' in the notation. 
Since 
the category $\mathrm{Isoc} ^{\dag \dag} _{\Sigma} (Y,\PP/\W)$ is independent (up to canonical equivalences of categories 
appearing in \ref{ntn-Hol} to define 
$\textrm{H-Isoc} ^{\dag \dag} (\mathbb{Y}/\W)$) 
on the choice of the frame 
$(Y,X, \PP)$ enclosing $\mathbb{Y}$, 
we will denote it by $\mathrm{Isoc} ^{\dag \dag} _{\Sigma}(\mathbb{Y}/\W)$. 

\item We denote by 
$\mathrm{Isoc} ^{(\bullet)} _{\Sigma}(Y,\PP/\W)$
or simply by
$\mathrm{Isoc} ^{(\bullet)} _{\Sigma}(\mathbb{Y}/\W)$
the full subcategory of 
$\smash{\underrightarrow{LM}}  _{\Q,\mathrm{coh}} ( \smash{\widehat{\D}} _{\PP} ^{(\bullet)})$
(see Notation \cite[2.2.4]{caro-stab-sys-ind-surcoh})
of objects
$\E ^{(\bullet)}$ such that
$\underrightarrow{\lim}~
\E ^{ (\bullet)}
\in 
\mathrm{Isoc} ^{\dag \dag} _{\Sigma}(\mathbb{Y}/\W)$.
The functor 
$\underrightarrow{\lim}$
of \ref{limeqcat} induces the equivalence of categories
$\underrightarrow{\lim}
\colon 
\mathrm{Isoc} ^{(\bullet)} _{\Sigma}(\mathbb{Y}/\W)
\cong 
\mathrm{Isoc} ^{\dag \dag} _{\Sigma}(\mathbb{Y}/\W)$.
\end{enumerate}

\end{dfn}

\begin{prop}
\label{unip-hol}
 We keep the notation and hypotheses of 
\ref{dfnSunip}.
\begin{enumerate}

\item The property that 
an object
$\E^{(\bullet)}$
of 
$\smash{\underrightarrow{LM}}  _{\Q} ( \smash{\widehat{\D}} _{\PP} ^{(\bullet)})$
is in $ \mathrm{Isoc} ^{(\bullet)} _{\Sigma}(\mathbb{Y}/\W)$ is local in $\PP$.

\item The category 
$\mathrm{Isoc} ^{\dag \dag} _{\Sigma}(\mathbb{Y}/\W)$
(resp. $\mathrm{Isoc} ^{(\bullet)} _{\Sigma}(\mathbb{Y}/\W)$)
is an abelian subcategory of 
$\textrm{H-Isoc} ^{\dag \dag}(\mathbb{Y}/\W)$
(resp. $\smash{\underrightarrow{LM}}  _{\Q,\mathrm{coh}} ( \smash{\widehat{\D}} _{\PP} ^{(\bullet)})$)
stable under extension.

\item The category $\mathrm{Isoc} ^{(\bullet)} _{\Sigma}(\mathbb{Y}/\W)$ is stable under base change in the following sense: 
for any morphism $\W\to \W '$ of $\mathrm{DVR}  (\V)$,
for any $\E^{(\bullet)}  \in \mathrm{Isoc} ^{(\bullet)} _{\Sigma}(\mathbb{Y}/\W)$,
putting $(Y', X' , \PP')$ the frame over $\W'$ induced by base change from $(Y,X,\PP)$ by $\W\to \W '$,
we get 
$ \W '  \smash{\overset{\L}{\otimes}}   ^{\dag}
_{\W}  \E^{(\bullet) }
 \in  
 \mathrm{Isoc} ^{(\bullet)} _{\Sigma} (Y',X'/\W')$. 

\item Let 
$\E ^{(\bullet)} ,\, \FF ^{(\bullet)} \in \mathrm{Isoc} ^{(\bullet)} _{\Sigma} (\mathbb{Y}/\W)$.
We have
$\E ^{ (\bullet)}\smash{\overset{\L}{\otimes}}   ^{\dag} _{\O _{\PP}} \FF ^{ (\bullet)} [d _{Y/P}]
\in 
\mathrm{Isoc} ^{(\bullet)} _{\Sigma}(\mathbb{Y}/\W)$ 
(which means in particular
that the complex is in fact isomorphic to a module).

\end{enumerate}
\end{prop}

\begin{proof}
Let $\E^{(\bullet)}$
of 
$\smash{\underrightarrow{LM}}  _{\Q} ( \smash{\widehat{\D}} _{\PP} ^{(\bullet)})$.
The fact that 
$\E^{(\bullet)} \in \smash{\underrightarrow{LM}}  _{\Q,\mathrm{coh}} ( \smash{\widehat{\D}} _{\PP} ^{(\bullet)})$
is local in $\PP$ (recall Definition \cite[2.2.1]{caro-stab-sys-ind-surcoh}).
Hence, we get the first assertion. 
The second one is a consequence of \ref{ovhol-Sigma}.
The assertion 3) is straightforward. 
Let us  check 4).
Let 
$\E ^{(\bullet)} ,\, \FF ^{(\bullet)} \in \mathrm{Isoc} ^{(\bullet)} _{\Sigma} (\mathbb{Y}/\W)$.
By using the assertion 1), 
we can suppose that there exists a divisor $T$ of $P$ such that $Y= X \setminus T$.
Then, 
$ \mathrm{Isoc} ^{(\bullet)} _{\Sigma}(\mathbb{Y}/\W)
=
\mathrm{Isoc} ^{(\bullet)} _{\Sigma}(\PP, T,X/\W)$ 
is a full subcategory of 
$ \mathrm{Isoc} ^{(\bullet)} (\PP, T,X/\W)$.
From Lemma \cite[3.2.2.1]{caro-stab-prod-tens} (in fact, \cite[3.1.5.1]{caro-stab-prod-tens} is sufficient), we get 
$\E ^{ (\bullet)}\smash{\overset{\L}{\otimes}}   ^{\dag} _{\O _{\PP} (\hdag T) _\Q} \FF ^{ (\bullet)} [d _{Y/P}]
\in 
 \mathrm{Isoc} ^{(\bullet)} (\PP, T,X/\W)$.
 Since 
 $\E^{(\bullet)} ,
 \,
 \FF^{(\bullet)}  \in \smash{\underrightarrow{LM}}  _{\Q,\mathrm{coh}} ( \smash{\widehat{\D}} _{\PP} ^{(\bullet)}) 
 \cap  \mathrm{Isoc} ^{(\bullet)} (\PP, T,X/\W)$,
 then 
 $\E^{(\bullet)}  (\hdag T) \riso \E^{(\bullet)} $
 and 
  $\FF^{(\bullet)}  (\hdag T) \riso \FF^{(\bullet)} $.
  Hence, from \cite[2.1.5]{caro-stab-prod-tens}, we get 
 $\E ^{ (\bullet)}\smash{\overset{\L}{\otimes}}   ^{\dag} _{\O _{\PP} } \FF ^{ (\bullet)}
 \riso 
 \E ^{ (\bullet)}\smash{\overset{\L}{\otimes}}   ^{\dag} _{\O _{\PP} (\hdag T) _\Q} \FF ^{ (\bullet)} $.

Let 
$\E := \underrightarrow{\lim}~
\E ^{ (\bullet)}$, 
$\FF := \underrightarrow{\lim}~
\FF ^{ (\bullet)}$.
From \ref{eqcat-isoc}, there exist  $E,F \in 
\mathrm{Isoc} _\Sigma ^{\dag}(\PP, T, X/\W) $
such that
 $\sp _{+} (E) \riso \E$ 
 and 
 $\sp _{+} (F) \riso \FF$.
 From Theorem \cite[3.2.6.2]{caro-stab-prod-tens} 
(in fact, Proposition \cite[3.1.8]{caro-stab-prod-tens} is sufficient), 
$\sp _+ ( E \otimes F) \riso \sp _{+} (\E) \smash{\overset{\L}{\otimes}}   ^{\dag} _{\O _{\PP} (\hdag T) _\Q} \sp _+ (F) [d _{Y/P}]$
where $\otimes $ is the usual tensor product 
of $\mathrm{Isoc} ^{\dag}(\PP, T, X/\W)$.
Hence, we reduce to check that
$E \otimes F$ has $\Sigma ^{r}$-unipotent monodromy according to Shiho's definition \cite[3.9]{Shiho-logextension} (and its remark).
Using \cite[3.16]{Shiho-logextension} (or better \cite[3.17]{Shiho-logextension}), 
we get that $E $ (resp. $F$) comes from a log convergent isocrystal 
$G _1$ (resp. $G _2$) with exponents in $\tau (\Sigma)$.  
If $\mathrm{Exp} (G _1)$ and $\mathrm{Exp}(G _2)$
are the exponents of respectively $G _1$ and $G _2$ then 
the exponents of $G _1 \otimes G _2$ 
are $\mathrm{Exp} (G _1)+\mathrm{Exp}(G _2)$.
Hence, since $\Sigma$ is a group, 
since $E \otimes F$ comes from $G _1 \otimes G _2$,
then using 
\cite[3.16]{Shiho-logextension}, 
we get that 
$E \otimes F$ 
 has $\Sigma ^{r}$-unipotent monodromy.

\end{proof}

\begin{dfn}
\label{dfnSunipD}
We keep the notation and hypotheses of 
\ref{dfnSunip}.
\begin{enumerate}
\item Let $\E \in D^{\mathrm{b}}_{\mathrm{h}}(\mathbb{Y}/\W)$ (see Notation \ref{ntn-Hol}.1).
We say that 
$\E$ ``has $\Sigma$-unipotent monodromy'' if, 
for any integer $i$, the module 
$\mathcal{H} ^i 
_{\mathrm{t}}
( \E )\in \mathrm{Isoc} ^{\dag \dag} _{\Sigma}(\mathbb{Y}/\W)$.
We will denote by 
$D^{\mathrm{b}}_{\mathrm{isoc}, \Sigma} (\mathbb{Y}/\W)$
the full subcategory of 
$D^{\mathrm{b}}_{\mathrm{h}}(\mathbb{Y}/\W)$
whose objects have
$\Sigma$-unipotent monodromy.

\item We denote by 
$\underrightarrow{LD} ^{\mathrm{b}} _{\Q,  \mathrm{isoc}, \Sigma}(\mathbb{Y}/\W)$
the full subcategory of 
$\smash{\underrightarrow{LD}} ^{\mathrm{b}} _{\Q,\mathrm{coh}} ( \smash{\widehat{\D}} _{\PP} ^{(\bullet)})$
of the objects
$\E ^{(\bullet)}$ such that
$\underrightarrow{\lim}~
\E ^{ (\bullet)}
\in 
D^{\mathrm{b}}_{\mathrm{isoc}, \Sigma} (\mathbb{Y}/\W)$.

\end{enumerate}

\end{dfn}

\begin{rem}
\label{rem-local}
We keep the notation and hypotheses of 
\ref{dfnSunip}.
Let $\E \in D^{\mathrm{b}}_{\mathrm{h}}(\mathbb{Y}/\W)$.
The fact that 
$\E \in D^{\mathrm{b}}_{\mathrm{isoc}, \Sigma} (\mathbb{Y}/\W)$
is local in $\PP$. 
\end{rem}

\begin{prop}
\label{QUserreHol}
We keep the notation and hypotheses of 
\ref{dfnSunipD}.

\begin{enumerate}

\item The category $\underrightarrow{LD} ^{\mathrm{b}} _{\Q,  \mathrm{isoc}, \Sigma}(\mathbb{Y}/\W)$
(resp. $D^{\mathrm{b}}_{\mathrm{isoc}, \Sigma} (\mathbb{Y}/\W)$)
is a triangle subcategory of
$\smash{\underrightarrow{LD}} ^{\mathrm{b}} _{\Q,\mathrm{h}} ( \smash{\widehat{\D}} _{\PP} ^{(\bullet)})$
(resp. $D^{\mathrm{b}}_{\mathrm{h}} (Y,\PP/\W)$).

\item A direct factor in $\smash{\underrightarrow{LD}} ^{\mathrm{b}} _{\Q,\mathrm{coh}} ( \smash{\widehat{\D}} _{\PP} ^{(\bullet)})$
of an object of $\underrightarrow{LD} ^{\mathrm{b}} _{\Q,  \mathrm{isoc}, \Sigma}(\mathbb{Y}/\W)$ is an object of
$\underrightarrow{LD} ^{\mathrm{b}} _{\Q,  \mathrm{isoc}, \Sigma}(\mathbb{Y}/\W)$.

\item \label{stab-tensorSigma1}
The category $\underrightarrow{LD} ^{\mathrm{b}} _{\Q,  \mathrm{isoc}, \Sigma}(\mathbb{Y}/\W)$ is stable under base change, i.e. 
for any morphism $\W\to \W '$ of $\mathrm{DVR}  (\V)$,
for any $\E^{(\bullet)}  \in  \underrightarrow{LD} ^{\mathrm{b}} _{\Q,  \mathrm{isoc}, \Sigma}(\mathbb{Y}/\W)$,
putting $(Y', X' , \PP')$ the frame over $\W'$ induced by base change from $(Y,X,\PP)$ by $\W \to \W'$,
we get 
$ \W '  \smash{\overset{\L}{\otimes}}   ^{\dag}
_{\W}  \E^{(\bullet) }
 \in  \underrightarrow{LD} ^{\mathrm{b}} _{\Q,  \mathrm{isoc}, \Sigma}(Y',X'/\W')$.

\item Let 
$\E ^{(\bullet)} ,\, \FF ^{(\bullet)} \in \underrightarrow{LD} ^{\mathrm{b}} _{\Q,  \mathrm{isoc}, \Sigma}(\mathbb{Y}/\W)$.
We have
$\E ^{ (\bullet)}\smash{\overset{\L}{\otimes}}   ^{\dag} _{\O _{\PP}} \FF ^{ (\bullet)} 
\in \underrightarrow{LD} ^{\mathrm{b}} _{\Q,  \mathrm{isoc}, \Sigma}(\mathbb{Y}/\W).$

\end{enumerate}
\end{prop}

\begin{proof}
The second assertion is straightforward. 
The other ones are a consequence of \ref{unip-hol}. 
\end{proof}

\begin{prop}
\label{regu-pullback}
Let $\theta = (b,a,f) \colon (Y',X', \PP')\to (Y,X, \PP)$ be a morphism of frames over $\W$.
We suppose that $X$ and $X'$ are $l$-smooth, and $Z:=X - Y$ (resp. $Z':=X' - Y'$) is a simple normal crossing divisor of $X$ (resp. $X'$).
We put $\mathbb{Y}:= (Y,X)$ and 
$\mathbb{Y}':= (Y',X')$.

\begin{enumerate}

\item We have the exact functor
\begin{equation}
\label{f+isocSigmapre}
\R \underline{\Gamma} ^{\dag} _{Y'} f ^{!} [d _Y -d_{Y'}]
\colon
\mathrm{Isoc} ^{(\bullet)} _{\Sigma}(\mathbb{Y}/\W)
\to
\mathrm{Isoc} ^{(\bullet)} _{\Sigma}(\mathbb{Y}'/\W).
\end{equation}

\item We have the t-exact functor
\begin{equation}
\label{f+isocSigma}
\R \underline{\Gamma} ^{\dag} _{Y'} f ^{!} [d _Y -d_{Y'}]
\colon
\underrightarrow{LD} ^{\mathrm{b}} _{\Q,  \mathrm{isoc}, \Sigma} (\mathbb{Y}/\W)
\to
\underrightarrow{LD} ^{\mathrm{b}} _{\Q,  \mathrm{isoc}, \Sigma} (\mathbb{Y}'/\W),
\end{equation}
and a similar one by replacing 
``$\underrightarrow{LD} _{\Q} $''
by 
``$D$''.
\end{enumerate}
\end{prop}

\begin{proof}
Let us check \ref{f+isocSigmapre}.
From \ref{unip-hol}.1, we can suppose that there exist a divisor $T$ of $P$ such that $Y= X \setminus T$
and a divisor $T'$ of $P'$ such that $Y'= X '\setminus T'$.
Is this case,
$ \mathrm{Isoc} ^{(\bullet)} _{\Sigma}(\mathbb{Y}/\W)
=
\mathrm{Isoc} ^{(\bullet)} _{\Sigma} (\PP, T,X/\W)$ 
is a full subcategory
of 
$ \mathrm{Isoc} ^{(\bullet)} (\PP, T,X/\W)$.
Let $\E ^{(\bullet)}\in \mathrm{Isoc} ^{(\bullet)} _{\Sigma} (\PP, T,X/\W)$.
 From \cite[1.4.5.3]{caro-stab-prod-tens},
we have
$ \R \underline{\Gamma} ^{\dag} _{Y'} f ^{!} [d _Y -d_{Y'}]
(\E ^{(\bullet)})\in 
\mathrm{Isoc} ^{(\bullet)} (\PP', T',X'/\W)$.
Let 
$\E := \underrightarrow{\lim}~
\E ^{ (\bullet)}$.
From \ref{eqcat-isoc},
there exists  
$E\in \mathrm{Isoc} ^{\dag} _\Sigma (\PP, T,X/\W)$ such that
 $\sp _{+} (E) \riso \E$.
Using  \cite[3.17]{Shiho-logextension}, 
the overconvergent isocrystal $E $ comes from a log convergent isocrystal 
$G $ with exponents in $\tau (\Sigma)$.  
Using the Remark \cite[1.1.3.1]{caro-Tsuzuki}, 
we get that 
$a _\sharp ^* (G)$ 
is a log convergent isocrystal with exponents in $\tau (\Sigma)$,
where $a _\sharp \colon (X ' , M _{Z'}) \to (X  , M _{Z})$
is the morphism of log-schemes induced by $a$. 
Hence, $a  ^* (E)$ has $\Sigma ^{r}$-unipotent monodromy. 
Using  \cite[1.4.5.4]{caro-stab-prod-tens}, we get that 
$\underrightarrow{\lim} (\R \underline{\Gamma} ^{\dag} _{Y'} f ^{!} [d _Y -d_{Y'}]
(\E ^{(\bullet)})) 
\riso 
\R \underline{\Gamma} ^{\dag} _{Y'} f ^{!} [d _Y -d_{Y'}]
(\E )
\in 
\mathrm{Isoc} _\Sigma ^{\dag \dag}  (\PP', T',X'/\W)$.
Hence, 
$(\R \underline{\Gamma} ^{\dag} _{Y'} f ^{!} [d _Y -d_{Y'}]
(\E ^{(\bullet)})) 
\in 
\mathrm{Isoc} _\Sigma ^{(\bullet)}  (\PP', T',X'/\W)$
has also $\Sigma ^{r}$-unipotent monodromy.
\end{proof}

\begin{dfn}
Let $\PP$ be a  smooth formal scheme over $\W$.
We denote by 
$\underrightarrow{LD} ^{\mathrm{b}} _{\Q, \Sigma} (\smash{\widehat{\D}} _{\PP} ^{(\bullet)})$
the smallest subcategory of 
$\underrightarrow{LD} ^{\mathrm{b}} _{\Q, \mathrm{h}} (\smash{\widehat{\D}} _{\PP} ^{(\bullet)})$
stable by devissages and containing the categories of the form
$ \underrightarrow{LD} ^{\mathrm{b}} _{\Q,  \mathrm{isoc}, \Sigma}(Y, \overline{Y} /\W)$
where $\overline{Y}$ is a closed $l$-smooth subvariety of $P$,
$Y$ is an open subscheme of $\overline{Y}$ such that 
$\overline{Y} \setminus Y$ is a strict normal crossing divisor in $Y$ (thanks to the Proposition \ref{QUserreHol}.1,
this is concretely defined as in \cite[3.2.21]{caro-2006-surcoh-surcv}). 
We call the objects of 
$\underrightarrow{LD} ^{\mathrm{b}} _{\Q, \Sigma} (\smash{\widehat{\D}} _{\PP} ^{(\bullet)})$
as those of 
$\underrightarrow{LD} ^{\mathrm{b}} _{\Q, \mathrm{h}} (\smash{\widehat{\D}} _{\PP} ^{(\bullet)})$
``having $\Sigma$-unipotent monodromy''.
Finally, 
we denote by
$D ^{\mathrm{b}}  _{\Sigma}  (\smash{\D} ^\dag _{\PP\Q})$
the essential image of the functor
$\underrightarrow{LD} ^{\mathrm{b}} _{\Q,\Sigma} (\smash{\widehat{\D}} _{\PP} ^{(\bullet)})
\to 
D ^{\mathrm{b}}  _{\mathrm{h}} (\smash{\D} ^\dag _{\PP\Q})$
induced by \ref{limeqcat}.

\end{dfn}

\begin{thm}
 \label{dual}
Let $\PP$ be a  smooth formal scheme over $\W$.
The dual functor $\DD_\PP$ induces an autoequivalence of 
$\underrightarrow{LD} ^{\mathrm{b}} _{\Q, \Sigma} (\smash{\widehat{\D}} _{\PP} ^{(\bullet)})$
(resp. of $D ^{\mathrm{b}} _{\Sigma}  ( \smash{\D} ^\dag _{\PP\Q})$).
\end{thm}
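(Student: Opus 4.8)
The plan is to reduce the statement, via the very construction of $\underrightarrow{LD} ^{\mathrm{b}} _{\Q, \Sigma}$ by devissage, to the stability under duality of the ``building blocks'', namely the categories of $\Sigma$-unipotent isocrystals over a smooth base, where it is exactly Proposition \ref{QUserreHol}.

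First I would recall that $\DD _\PP$ is a contravariant triangulated autoequivalence of $\underrightarrow{LD} ^{\mathrm{b}} _{\Q, \mathrm{h}} (\smash{\widehat{\D}} _{\PP} ^{(\bullet)})$ with $\DD _\PP \circ \DD _\PP \cong \mathrm{id}$: the data of coefficients $\underrightarrow{LD} ^{\mathrm{b}} _{\Q, \mathrm{h}}$ is stable under dual functors by its construction (see \ref{ex-cst-surcoh}.\ref{hstab} and the last remark of \ref{rem-overhol}), and biduality holds for overholonomic complexes. A triangulated functor commutes with the formation of the smallest full subcategory stable by devissages and containing a given family of objects; hence it is enough to check that, for any closed smooth subvariety $\overline{Y}$ of $P$ and any open $Y$ of $\overline{Y}$ such that $\overline{Y} \setminus Y$ is a strict normal crossing divisor, the functor $\DD _\PP$ sends $\underrightarrow{LD} ^{\mathrm{b}} _{\Q, \mathrm{isoc}, \Sigma}(Y, \overline{Y}/K)$ into itself. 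By the equivalence \ref{limeqcat} and the commutation of duality with $\underrightarrow{\lim}$, this amounts to the stability of $D^{\mathrm{b}}_{\mathrm{isoc}, \Sigma} (\mathbb{Y}/K)$ under $\DD _\PP$, where $\mathbb{Y} := (Y, \overline{Y})$.

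Next I would use that, since $\overline{Y}$ is smooth and $Z := \overline{Y}\setminus Y$ is a divisor, the objects of $\mathrm{Isoc} ^{\dag \dag}(Y, \overline{Y}/K)$ are, up to the usual shift and twist and via the equivalence $\sp _+$ of \ref{eqcat-isoc}, coherent $\smash{\D} ^\dag _{\PP} (\hdag Z) _{\Q}$-modules supported on $\overline{Y}$ that are $\O$-coherent off $Z$; in particular $\DD _\PP$ is, up to a fixed shift, t-exact on $D^{\mathrm{b}}_{\mathrm{isoc}} (Y,\overline{Y}/K)$ and, on $\mathrm{Isoc} ^{\dag \dag}(Y, \overline{Y}/K)$, identifies with the dual $E \mapsto E ^{\vee}$ of the associated overconvergent isocrystal (this is part of the theory of overcoherent isocrystals and of relative duality; see \cite{caro-pleine-fidelite,Vir04}). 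Consequently, for $\E \in D^{\mathrm{b}}_{\mathrm{isoc}, \Sigma} (\mathbb{Y}/K)$ one has $\mathcal{H} ^i _{\mathrm{t}}(\DD _\PP \E) \cong (\mathcal{H} ^{j} _{\mathrm{t}}(\E)) ^{\vee}$ for a suitable $j$. Now $\Sigma$, being a subgroup of $\Z _p/\Z$, is stable under $x \mapsto -x$, so passing to the dual negates the exponents along the branches of $Z$ and preserves Shiho's $\Sigma ^{r}$-unipotence (see \cite{Shiho-logextension}); equivalently, $\mathrm{Isoc} ^{\dag \dag} _{\Sigma}(\mathbb{Y}/K)$ is closed under duals by Proposition \ref{QUserreHol}. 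Hence $\mathcal{H} ^i _{\mathrm{t}}(\DD _\PP \E) \in \mathrm{Isoc} ^{\dag \dag} _{\Sigma}(\mathbb{Y}/K)$ for all $i$, i.e.\ $\DD _\PP \E \in D^{\mathrm{b}}_{\mathrm{isoc}, \Sigma} (\mathbb{Y}/K)$. Transporting back along $\underrightarrow{\lim}$ and combining with the first step, $\DD _\PP$ preserves $\underrightarrow{LD} ^{\mathrm{b}} _{\Q, \Sigma} (\smash{\widehat{\D}} _{\PP} ^{(\bullet)})$, and being involutive it is an autoequivalence of it. The respective statement follows at once, since $D ^{\mathrm{b}} _{\Sigma}(\smash{\D} ^\dag _{\PP\Q})$ is by definition the essential image of $\underrightarrow{LD} ^{\mathrm{b}} _{\Q, \Sigma} (\smash{\widehat{\D}} _{\PP} ^{(\bullet)})$ under \ref{limeqcat}, which intertwines the two dual functors.

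The one point requiring genuine care is the third step's identification of $\DD _\PP$, restricted to isocrystal-type objects over a smooth base with strict normal crossing boundary, with the isocrystal dual up to shift and twist --- equivalently, the t-exactness up to shift of $\DD _\PP$ there. This is where one must invoke the structure theory of overcoherent isocrystals and the relative duality isomorphism rather than a formal argument; everything else reduces to Proposition \ref{QUserreHol} and the formal fact that a triangulated involution preserves a devissage-generated subcategory as soon as it preserves a generating family.
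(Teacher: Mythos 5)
There is a genuine gap at the step you yourself single out as ``the one point requiring genuine care''. Your reduction by devissage is only valid in the weak form: it suffices that $\DD _{\PP}$ send each generating category $\underrightarrow{LD} ^{\mathrm{b}} _{\Q,  \mathrm{isoc}, \Sigma}(Y, \overline{Y} /K)$ into the devissage-closure $\underrightarrow{LD} ^{\mathrm{b}} _{\Q, \Sigma} (\smash{\widehat{\D}} _{\PP} ^{(\bullet)})$. But you claim the stronger statement that $\DD _{\PP}$ sends each generating category into \emph{itself}, by asserting that on $D^{\mathrm{b}}_{\mathrm{isoc}, \Sigma} (Y,\overline{Y}/K)$ the functor $\DD _{\PP}$ is t-exact up to shift and coincides with the isocrystal dual $E \mapsto E ^{\vee}$. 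This is false as soon as $D:=\overline{Y}\setminus Y$ is nonempty: an object $\E$ of $\mathrm{Isoc} ^{\dag\dag} _{\Sigma}(Y,\overline{Y}/K)$ satisfies $\E \riso (\hdag D)(\E)$ (it is of ``$j _{+}$-type'', overconvergent along $D$), whereas $\DD _{\PP}(\E)$ is of ``$j _{!}$-type'': its localization $(\hdag D)\DD _{\PP}(\E)$ is indeed the dual isocrystal, but $\DD _{\PP}(\E)$ differs from this localization by a term supported on $D$ (already for $\E = \O _{\PP}(\hdag D) _{\Q}$ on $(\A ^1, \P ^1)$), so $\DD _{\PP}(\E)$ is not an object of $D^{\mathrm{b}}_{\mathrm{isoc}, \Sigma}(Y,\overline{Y}/K)$. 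The closure under duals in Proposition \ref{QUserreHol} concerns the internal dual of isocrystals, not $\DD _{\PP}$, so it cannot be invoked to repair this.

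This is precisely why the paper's proof is not formal: after the same devissage to $\E \in \mathrm{Isoc} ^{\dag \dag} _{\Sigma}(Y,\overline{Y}/K)$, it writes $\E = (\hdag D)(\G)$ for a convergent log isocrystal $\G$ on $(\overline{Y}, M _D)$ with exponents in $\tau(\Sigma)$ (Shiho, \cite[3.17]{Shiho-logextension}), identifies $j _{+}(\E)$ with $\alpha _{+}(\G)$ using $\tau(0)=0$ (\cite[2.2.9]{caro-Tsuzuki}), and then computes via relative duality $\DD _{\PP} \circ j _{+} (\E) \riso \alpha _{!}(\G ^{\vee}) \riso \alpha _{+}(\G ^{\vee}(-\ZZ))$ (\cite[5.24.(ii)]{caro_log-iso-hol}); the conclusion that this log pushforward, whose exponents remain in $\Sigma$ after the twist, lies in $D ^{\mathrm{b}} _{\Sigma}(\smash{\D} ^\dag _{\PP\Q})$ rests on the nontrivial devissage result \cite[3.5.6]{caro-stab-u!R-Gamma}, not on t-exactness of $\DD _{\PP}$ on isocrystals. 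To fix your argument you would need to replace your third step by this log-geometric computation (or some equivalent control of the ``boundary'' term $\R \underline{\Gamma} ^{\dag} _{D}$ of the dual, keeping track of exponents), which is the actual mathematical content of the theorem.
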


\begin{proof}
Let $\FF \in D ^{\mathrm{b}} _{\Sigma}  ( \smash{\D} ^\dag _{\PP\Q})$.
By devissage,  
we can suppose that there exists
a frame $(Y,X, \PP)$ 
where 
$X$ is $l$-smooth, $Z := X \setminus Y$ is a strict normal crossing divisor of 
$X$, 
and there exists
$\E \in D^{\mathrm{b}}_{\mathrm{isoc},\Sigma}(Y,X/\W)$ 
such that 
$j _{+}\E \riso \FF$,
where 
$j \colon Y \hookrightarrow P$ 
is the immersion (here $j _+$ means simply the inclusion of
$D^{\mathrm{b}}_{\mathrm{h}} (Y,\PP/\W)$
in $D^{\mathrm{b}}_{\mathrm{h}} (\smash{\D} ^\dag _{\PP\Q})$ but we keep it in the notation to be precise).
By devissage,
we can suppose that 
$\E \in \mathrm{Isoc} ^{\dag \dag} _{\Sigma}(Y,X/\W)$. 
From the remark \cite[3.17]{Shiho-logextension}, 
there exists a convergent log isocrystals $\G$ on the log scheme
$(X, M _Z)$, where $M _Z$ is the log-structure induced by $Z$,
with exponents in $\tau (\Sigma)$
such that $(\hdag Z) (\G) \riso \E$. 

First, suppose there exists a morphism of smooth formal $\W$-schemes
$\X \hookrightarrow \PP$, and a strict normal crossing divisor $\ZZ$ of $\X$ 
which lifts $Z$.
Using Berthelot-Kashiwara Theorem, we reduce to the case where $\X=\PP$. 
Since $\tau (0)=0$, 
from \cite[2.2.9]{caro-Tsuzuki} (or \cite[3.5.6.2]{caro-stab-u!R-Gamma}),
we have $j _{+} (\E)= \alpha _{+} (\G)$, where $\alpha \colon 
(\X, M _\ZZ) \to \X$ is the canonical morphism of log formal schemes. 
Then, and with \cite[5.24.(ii)]{caro_log-iso-hol} for the last isomorphism, we get 
$\DD _{\PP} \circ j _{+} (\E) \riso 
 \alpha _{!} (\G ^{\vee})
 \riso 
  \alpha _{+} (\G ^{\vee} (-\ZZ))$. 
From \cite[3.5.6]{caro-stab-u!R-Gamma},
we get that
$  \alpha _{+} (\G ^{\vee} (-\ZZ)) \in D ^{\mathrm{b}} _{\Sigma}  ( \smash{\D} ^\dag _{\PP\Q})$ 
(with the remark that the exponents stay in $\Sigma$). 
Hence, 
$\DD _{\PP} (\FF )\in D ^{\mathrm{b}} _{\Sigma}  ( \smash{\D} ^\dag _{\PP\Q})$.

In general, we remark that the devissage in overconvergent isocrystals 
having $\Sigma$-unipotent monodromy in the local situation (i.e. the paragraph above) from 
\cite[3.5.6]{caro-stab-u!R-Gamma} is given by a smooth stratification 
which is constructed from $X$, $D$. Hence, using Remark \ref{rem-local}, 
we get by localness that the restriction of $\DD _{\PP} (\FF )$ to any strata of this smooth stratification have
$\Sigma$-unipotent monodromy.

\end{proof}

\subsection{Potentially $\Sigma$-unipotent monodromy}
Let $\W$ be an object of $\mathrm{DVR}  (\V)$ and $l$ be its residue field. 

\begin{lem}
\label{remIsocSigma1}
Let $\PP$ be a smooth formal scheme over $\W$, 
$X$ be a closed subscheme of $P$ and $T$ be a divisor of $P$ such that $Y:=X \setminus T$ is $l$-smooth (over the residue field of $\W$).
Let $\E \in \textrm{Isoc} ^{\dag \dag}(\PP, T, X/\W)$ (see Notation \ref{Isodagdagdiv}).
If there exists a complete morphism of frames over $\W$ of the form 
$\theta = (b,a,f) \colon (Y',X', \PP')\to (Y ,X, \PP)$
such that 
$a \colon X' \to X$
is a projective surjective generically finite and etale morphism, 
$X'$ is $l$-smooth, $Z':=X' \setminus Y'$ is a simple normal crossing divisor of $X'$ 
and $\theta ^{!}(\E) :=
\R \underline{\Gamma} ^{\dag} _{Y'} f ^{!}(\E)
\in \mathrm{Isoc} ^{\dag \dag} _{\Sigma} (Y',\PP'/\W)$ (see Notation \ref{dfnSunip}.2), 
then $\E \in 
\textrm{H-Isoc} ^{\dag \dag}(Y,\PP/\W))$ (see Notation \ref{IsocSigmapre}.1).
\end{lem}

\begin{proof}
From Lemma \ref{ovhol-Sigma}, 
we get 
$\R \underline{\Gamma} ^{\dag} _{Y'} f ^{!}(\E)
\in 
\textrm{H-Isoc} ^{\dag \dag}(Y',\PP'/\W))$.
In particular we have $\theta ^{!}(\E) \in \textrm{H}(Y',\PP'/\W))$.
Since the overholonomicity after any base change is stable by realizable pushforwards, 
we get 
$f _{+}\R \underline{\Gamma} ^{\dag} _{Y'} f ^{!}(\E) 
\in \textrm{H} (Y,\PP/\W))$.
Moreover, since $\E$ is a direct factor of $f _{+}\R \underline{\Gamma} ^{\dag} _{Y'} f ^{!}(\E)$, 
then this yields that $\E \in  \textrm{H} (Y,\PP/\W)$.
\end{proof}

The above lemma \ref{remIsocSigma1} justifies why we restrict to overholonomic after any base change isocrystals 
in the definition \ref{IsocpotSigma} below :
\begin{dfn}
\label{IsocpotSigma}
Let $\mathbb{Y}:= (Y,X)$ be a couple over $\W$ such that $Y$ is smooth over $l$. 
Choose a frame over $\W$ of the form $(Y,X,\PP)$.
\begin{enumerate}

\item 
\label{IsocpotSigma1}
Let 
$\E \in \textrm{H-Isoc} ^{\dag\dag} (Y, \PP/\W)$ (see Notation \ref{IsocSigmapre}.1).
We say that $\E$ is an isocrystal on $(Y, \PP/\W)$ (or simply on $\mathbb{Y}$) having ``potentially $\Sigma$-unipotent monodromy'' 
if, for any irreducible component $Y _{1}$ of $Y$, denoting by $X _1$ the closure of $Y _1$ in $X$, there exists
a morphism of frames over $\W$ of the form 
$\theta = (b,a,f) \colon (Y',X', \PP')\to (Y _1,X _1, \PP)$
such that 
$a \colon X' \to X _1$ is  a projective surjective generically finite and etale morphism, 
$Y'= a ^{-1} (Y)$,
$X'$ is $l$-smooth, 
$X' \setminus Y'$ is a simple normal crossing divisor of $X'$ 
and such that $\theta ^{!}(\E) 
:=
\R \underline{\Gamma} ^{\dag} _{Y'} f ^{!}(\E)
\in \mathrm{Isoc} ^{\dag \dag} _{\Sigma} (Y',\PP'/\W)$
(see Notation \ref{dfnSunip}.2). 

We denote by 
$\mathrm{Isoc} ^{\dag \dag} _{\text{pot-}\Sigma}(Y, \PP/\W)$ 
the full subcategory of 
$\mathrm{H} (Y,\PP/\W)$
whose objects are isocrystals having potentially $\Sigma$-unipotent monodromy. 
Using \ref{dfnSunip}.2, we check that the category
$\mathrm{Isoc} ^{\dag \dag} _{\text{pot-}\Sigma}(Y,\PP/\W)$
does not depend on the choice of the frame $(Y,X, \PP)$ enclosing $\mathbb{Y}$.
Hence, we will also 
write $\mathrm{Isoc} ^{\dag \dag} _{\text{pot-}\Sigma}(\mathbb{Y}/\W)$ instead of 
$\mathrm{Isoc} ^{\dag \dag} _{\text{pot-}\Sigma}(Y, \PP/\W)$.

\item 
Let
$D^{\mathrm{b}}_{\mathrm{isoc}, \text{pot-}\Sigma} (Y,\PP/\W)$
be the full subcategory of 
$D^{\mathrm{b}}_{\mathrm{h}} (Y,\PP/\W)$
of the objects $\E$ such that,
for any integer $i$, we have 
$\mathcal{H} ^i _{\mathrm{t}}( \E ) \in \mathrm{Isoc} ^{\dag \dag} _{\text{pot-}\Sigma}(Y,\PP/\W)$.
Since the category
$D^{\mathrm{b}}_{\mathrm{isoc}, \text{pot-}\Sigma} (Y,\PP/\W)$
does not depend on the choice of the frame $(Y,X, \PP)$ enclosing $\mathbb{Y}$,
we will also write  
$D^{\mathrm{b}}_{\mathrm{isoc}, \text{pot-}\Sigma} (\mathbb{Y}/\W)$ instead of 
$D^{\mathrm{b}}_{\mathrm{isoc}, \text{pot-}\Sigma} (Y,\PP/\W)$.

\item We denote by 
$\underrightarrow{LD} ^{\mathrm{b}} _{\Q,  \mathrm{isoc}, \text{pot-}\Sigma}(Y,\PP/\W)$
the full subcategory of 
$\underrightarrow{LD} ^{\mathrm{b}} _{\Q,  \textrm{h-isoc}}(Y,\PP/\W)$
of objects
$\E ^{(\bullet)}$ such that
$\underrightarrow{\lim}~
\E ^{ (\bullet)}
\in 
D^{\mathrm{b}}_{\mathrm{isoc}, \text{pot-}\Sigma} (Y,\PP/\W)$.
Since this does not depend on the choice of the frame $(Y,X, \PP)$ enclosing $\mathbb{Y}$,
we can simply write 
$\underrightarrow{LD} ^{\mathrm{b}} _{\Q,  \mathrm{isoc}, \text{pot-}\Sigma}(\mathbb{Y}/\W)$.
We denote by 
$\underrightarrow{LM}  _{\Q,  \mathrm{isoc}, \text{pot-}\Sigma}(Y,\PP/\W)$
the full subcategory of $\underrightarrow{LD} ^{\mathrm{b}} _{\Q,  \mathrm{isoc}, \text{pot-}\Sigma}(Y,\PP/\W)$
of complexes 
$\E ^{(\bullet)}$ such that
$\mathcal{H} ^j _t (\E ^{(\bullet)})=0$ for any $j \not =0$.

\end{enumerate}

\end{dfn}

\begin{rem}
\label{remIsocSigma2}
The following remark should justify our notation above. 
Let $\mathbb{Y}= (Y,X)$ be a couple over $\W$ where $Y$ is $l$-smooth.
Suppose in this remark that the absolute Frobenius homomorphism
$l \riso l$ sending $x $ to $x ^{p}$ lifts to an automorphism of the form $\W \riso \W$. 
In that case, in \cite[1.2.13]{Abe-Caro-weights}, 
we have defined the category 
$F\text{-}\mathrm{Isoc} ^{\dag \dag}(\mathbb{Y}/\W)$,
whose objects belong to 
$\textrm{H-Isoc} ^{\dag\dag} (\mathbb{Y}/\W)$.
We can translate Kedlaya's semistable reduction theorem of 
\cite{kedlaya-semistableIV} as follows:  
if $\E $ is an object of $F\text{-}\mathrm{Isoc} ^{\dag \dag}(\mathbb{Y}/\W)$ then
$\E\in \mathrm{Isoc} ^{\dag \dag} _{\text{pot-}\Sigma}(\mathbb{Y}/\W)$ (for 
$\Sigma = {0}$ and then for any $\Sigma$ satisfying the convention of the paper).

\end{rem}

\begin{prop}
\label{prepot-regu-pullback}
Let $(Y,X,\PP)$ be a frame over $\W$ with $Y$ smooth over $l$. 
\begin{enumerate}

\item Let $\theta = (b,a,f) \colon (Y',X',\PP')\to (Y,X,\PP)$ be a morphism of frames over $\W$.
We suppose that $Y$ and $Y'$ are $l$-smooth.
We have the t-exact functor
\begin{equation}
\label{f!isocpotSigma}
\R \underline{\Gamma} ^{\dag} _{Y'} f ^{!} [d _Y -d_{Y'}]
\colon
 \underrightarrow{LD} ^{\mathrm{b}} _{\Q,  \mathrm{isoc}, \mathrm{pot}\textrm{-}\Sigma}(Y ,\PP/\W) 
\to
 \underrightarrow{LD} ^{\mathrm{b}} _{\Q,  \mathrm{isoc}, \mathrm{pot}\textrm{-}\Sigma}(Y ',\PP '/\W).
\end{equation}

\item  
The category $ \underrightarrow{LD} ^{\mathrm{b}} _{\Q,  \mathrm{isoc}, \mathrm{pot}\textrm{-}\Sigma}  (Y,\PP/\W)$
is a triangle subcategory of
$\smash{\underrightarrow{LD}} ^{\mathrm{b}} _{\Q,\mathrm{h}} ( Y,\PP/\W)$, 
stable under direct factors and base change.

\item 
For any
$\E ^{(\bullet)} ,\, \FF ^{(\bullet)} \in \underrightarrow{LD} ^{\mathrm{b}} _{\Q,  \mathrm{isoc}, \mathrm{pot}\textrm{-}\Sigma}(Y,\PP/\W)$, 
we have 
$\E ^{ (\bullet)}\smash{\overset{\L}{\otimes}}   ^{\dag} _{\O _{\PP}} \FF ^{ (\bullet)} 
\in 
\underrightarrow{LD} ^{\mathrm{b}} _{\Q,  \mathrm{isoc}, \mathrm{pot}\textrm{-}\Sigma}(Y,\PP/\W)$.

\end{enumerate}
We have similar properties
by replacing 
``$\underrightarrow{LD} _{\Q} $''
by ``$D$''.
\end{prop}

\begin{proof}
Let 
$\E ^{(\bullet)}
\in \underrightarrow{LM}  _{\Q,  \mathrm{isoc}, \text{pot-}\Sigma}(Y,\PP/\W)$.
We have to check 
$\R \underline{\Gamma} ^{\dag} _{Y'} f ^{!} [d _Y -d_{Y'}] 
( \E ^{(\bullet)})
\in 
\underrightarrow{LM}  _{\Q,  \mathrm{isoc}, \text{pot-}\Sigma}(Y',\PP'/\W)$.
We can suppose $Y$ and $Y'$ integral. 
By definition,  there exists
a morphism of frames over $\W$ of the form 
$(d,c,g) \colon (Y'',X'', \PP'')\to (Y ,X , \PP)$
such that 
$c \colon X'' \to X$ is  a projective surjective generically finite and etale morphism, 
$Y''= c ^{-1} (Y)$,
$X''$ is $l$-smooth, 
$X'' \setminus Y''$ is a simple normal crossing divisor of $X''$ 
and such that $\R \underline{\Gamma} ^{\dag} _{Y''} g ^{!}(\E)
\in 
\underrightarrow{LM}  _{\Q,  \mathrm{isoc}, \Sigma} (Y'',\PP''/\W)$.
Replacing $\PP''$ by $\PP''\times \PP$ if necessary, we can suppose 
$g$ smooth. 
Let  $Y _{1}$ be an irreducible component of $Y'' \times _{Y} Y'$, let  $X _1$ be the closure of $Y _1$ in $X ''\times _{X} X'$.
Using de Jong desingularization theorem (see \cite{dejong}), 
we get a morphism of frames over $\W$ of the form 
$(d',c',g') \colon (Y''',X''', \PP''')\to (Y _1,X _1 , \PP'' \times _{\PP} \PP ')$
such that 
$c' \colon X''' \to X _1$ is  a projective surjective generically finite and etale morphism, 
$Y'''= (c ') ^{-1} (Y_1)$,
$X'''$ is $l$-smooth, 
$X''' \setminus Y'''$ is a simple normal crossing divisor of $X'''$.
Let $\pi _1 \colon \PP'' \times _{\PP} \PP ' \to \PP''$,
$\pi _2 \colon \PP'' \times _{\PP} \PP ' \to \PP'$
be  the canonical projections. 
By using \ref{regu-pullback}, we get that 
$$\R \underline{\Gamma} ^{\dag} _{Y'''} (\pi _2 \circ g ') ^{ !}
\left (
\R \underline{\Gamma} ^{\dag} _{Y'} f ^{!} [d _Y -d_{Y'}] 
( \E ^{(\bullet)})
\right )
\riso 
\R \underline{\Gamma} ^{\dag} _{Y'''} ( \pi _1 \circ g ') ^{ !}[d _Y -d_{Y'}] 
(\R \underline{\Gamma} ^{\dag} _{Y''} g ^{!}
( \E ^{(\bullet)})
) 
\in 
\underrightarrow{LM}  _{\Q,  \mathrm{isoc}, \Sigma} (Y''',\PP'''/\W),$$
which yields that 
$\R \underline{\Gamma} ^{\dag} _{Y'} f ^{!} [d _Y -d_{Y'}] 
( \E ^{(\bullet)})
\in 
\underrightarrow{LM}  _{\Q,  \mathrm{isoc}, \text{pot-}\Sigma}(Y',\PP'/\W)$.

Since the functor
$\R \underline{\Gamma} ^{\dag} _{Y'} f ^{!} [d _Y -d_{Y'}] 
\colon
\underrightarrow{LD} ^{\mathrm{b}} _{\Q,  \textrm{h-isoc}}(Y ,\PP/\W) 
\to
\underrightarrow{LD} ^{\mathrm{b}} _{\Q,  \textrm{h-isoc}}(Y ',\PP '/\W)$
is t-exact, then so is 
\ref{f!isocpotSigma},  which completes the proof of 1).

Using \ref{regu-pullback}.2, \ref{QUserreHol}.1, 
(resp. \ref{regu-pullback}.2, \ref{QUserreHol}.4)
by proceeding similarly to the proof of  the part 1), we check that 
the category $ \underrightarrow{LD} ^{\mathrm{b}} _{\Q,  \mathrm{isoc}, \mathrm{pot}\textrm{-}\Sigma}  (Y,\PP/\W)$
is a triangle subcategory of
$\smash{\underrightarrow{LD}} ^{\mathrm{b}} _{\Q,\mathrm{h}} ( Y,\PP/\W)$, 
and that part 3) is valid.

The stability under direct factors and under base change
are respectively a consequence of \ref{QUserreHol}.2
and \ref{QUserreHol}.3.

\end{proof}

\begin{dfn}
\label{dfn-potSigma}
Let $\mathbb{Y}:= (Y,X)$ be a couple over $\W$. Choose a frame $(Y, X,\PP)$ over $\W$. 

\begin{enumerate}
\item Let 
$\E ^{(\bullet)} 
\in 
\underrightarrow{LD} ^{\mathrm{b}} _{\Q, \mathrm{h}} (\smash{\widehat{\D}} _{\PP} ^{(\bullet)})$.
We say that $\E ^{(\bullet)}$ has ``potentially $\Sigma$-unipotent monodromy'' if
there exist a smooth stratification (see Definition \cite[2.2.1]{Abe-Caro-weights})
$(P _0, \dots, P _r)$ of $P$
such that
$\R \underline{\Gamma} ^{\dag} _{P _i} (\E ^{(\bullet)}) \in  
 \underrightarrow{LD} ^{\mathrm{b}} _{\Q,  \mathrm{isoc}, \text{pot-}\Sigma}(P _i,\PP/\W)$.
We denote by
$\underrightarrow{LD} ^{\mathrm{b}} _{\Q, \text{pot-}\Sigma} (\smash{\widehat{\D}} _{\PP} ^{(\bullet)})$
the full subcategory of 
$\underrightarrow{LD} ^{\mathrm{b}} _{\Q, \mathrm{h}} (\smash{\widehat{\D}} _{\PP} ^{(\bullet)})$
whose objects have potentially $\Sigma$-unipotent monodromy.

\item We denote by 
$\underrightarrow{LD} ^{\mathrm{b}} _{\Q, \textrm{pot-}\Sigma} (Y,\PP/\W)$
or by
$\underrightarrow{LD} ^{\mathrm{b}} _{\Q, \textrm{pot-}\Sigma} (\mathbb{Y}/\W)$
the full category of 
$\underrightarrow{LD} ^{\mathrm{b}} _{\Q, \text{pot-}\Sigma} (\smash{\widehat{\D}} _{\PP} ^{(\bullet)})$
of objects $\E$ such that there exists an isomorphism of the form 
$\E \riso \R \underline{\Gamma} ^\dag _{Y} (\E)$.
From \ref{IsocpotSigma}.3, 
we check that $\underrightarrow{LD} ^{\mathrm{b}} _{\Q, \textrm{pot-}\Sigma} (\mathbb{Y}/\W)$
does not depend on the choice of the frame enclosing $\mathbb{Y}$, which justifies the notation.

\item We denote 
$D ^{\mathrm{b}}  _{\textrm{pot-}\Sigma}  (\mathbb{Y}/\W)$ by
the essential image of the functor
$\underrightarrow{LD} ^{\mathrm{b}} _{\Q,\textrm{pot-} \Sigma} (\mathbb{Y}/\W)
\to 
D ^{\mathrm{b}}  _{\mathrm{coh}} (\smash{\D} ^\dag _{\PP\Q})$
induced by \ref{limeqcat}.
We say that
$D ^{\mathrm{b}}  _{\text{pot-}\Sigma}  (\mathbb{Y}/\W)$ is the full subcategory of 
$D^{\mathrm{b}}_{\mathrm{h}}(\mathbb{Y}/\W)$
of objects having ``potentially $\Sigma$-unipotent monodromy''.
\item When $\Sigma = 0$, 
we denote respectively  
$\underrightarrow{LD} ^{\mathrm{b}} _{\Q, \textrm{pot-}\Sigma} (\mathbb{Y}/\W)$ 
and
$D ^{\mathrm{b}}  _{\textrm{pot-}\Sigma}  (\mathbb{Y}/\W)$
by
$\smash{\underrightarrow{LD}} ^{\mathrm{b}} _{\Q,\text{u}} (\mathbb{Y}/\W)$ and
$D^{\mathrm{b}}_{u}(\mathbb{Y}/\W)$.

\item We get some data of coefficients 
$\smash{\underrightarrow{LD}} ^{\mathrm{b}} _{\Q,\textrm{pot-}\Sigma}$,
$\smash{\underrightarrow{LD}} ^{\mathrm{b}} _{\Q,\mathrm{u}}$
defined by posing 
$\smash{\underrightarrow{LD}} ^{\mathrm{b}} _{\Q,\textrm{pot-}\Sigma} (\PP):= 
\smash{\underrightarrow{LD}} ^{\mathrm{b}} _{\Q,\textrm{pot-}\Sigma} ( \smash{\widehat{\D}} _{\PP} ^{(\bullet)})$,
$\smash{\underrightarrow{LD}} ^{\mathrm{b}} _{\Q,\mathrm{u}} (\PP):= 
\smash{\underrightarrow{LD}} ^{\mathrm{b}} _{\Q,\mathrm{u}} ( \smash{\widehat{\D}} _{\PP} ^{(\bullet)})$.

\end{enumerate}
\end{dfn}

\begin{lem}
\label{quj+}
Let $(Y, X,\PP)$ be a frame over $\W$. 
For any
$\E ^{(\bullet)}
\in \underrightarrow{LD} ^{\mathrm{b}} _{\Q, \mathrm{pot}\textrm{-}\Sigma} (\smash{\widehat{\D}} _{\PP} ^{(\bullet)})$,
we have 
$\R \underline{\Gamma} ^{\dag} _{Y} (\E ^{(\bullet)}) 
\in  
\underrightarrow{LD} ^{\mathrm{b}} _{\Q, \textrm{pot-}\Sigma} (Y,\PP/\W)$.
For any
$\G ^{(\bullet)} \in \underrightarrow{LD} ^{\mathrm{b}} _{\Q, \mathrm{h}} (Y,\PP/\W)$, 
the property 
$\G ^{(\bullet)} 
\in \underrightarrow{LD} ^{\mathrm{b}} _{\Q, \mathrm{pot}\textrm{-}\Sigma} (\smash{\widehat{\D}} _{\PP} ^{(\bullet)})$
is equivalent to the one that 
there exists a smooth stratification 
$(Y _0, \dots, Y _r)$ of $Y$
such that
$\R \underline{\Gamma} ^{\dag} _{Y _i} (\E ^{(\bullet)}) \in  
 \underrightarrow{LD} ^{\mathrm{b}} _{\Q,  \mathrm{isoc}, \mathrm{pot}\textrm{-}\Sigma}(P _i,\PP/\W)$.
\end{lem}

\begin{proof}
Let $\E ^{(\bullet)}
\in \underrightarrow{LD} ^{\mathrm{b}} _{\Q, \text{pot-}\Sigma} (\smash{\widehat{\D}} _{\PP} ^{(\bullet)})$.
Let 
$(P _1, \dots, P _r)$ 
be a smooth stratification
of 
the special fiber of $\PP$ 
such that
$\R \underline{\Gamma} ^{\dag} _{P _i} (\E ^{(\bullet)}) 
\in  
 \underrightarrow{LD} ^{\mathrm{b}} _{\Q,  \mathrm{isoc}, \text{pot-}\Sigma}(P _i,\PP/\W)$.
For each $i= 1,\dots, r$, choose
a smooth stratification
$(Y _{i,1}, \dots, Y _{i, r_i})$ of $Y \cap P _i$.
Set  $J = \{ (i, j _i) \; ;\; 1\leq i\leq r ,\ 1 \leq j _i \leq r _i\}$.
By ordering the set $J$ with the lexicographic order, 
we get a smooth stratification 
$( Y _{i, j _i}) _{(i, j _i)\in J}$ 
of $Y$. 
Let $\overline{Y}$ be the closure of $Y$ in $P$. 
Set $U:= P \setminus \overline{Y}$.
Then $U$ is an open subscheme of $P$ such that $Y$ is a open subscheme of $P \setminus U$. 
Hence, we get a smooth stratification $(U, ( Y _{i, j _i}) _{(i, j _i)\in J}, \overline{Y}\setminus Y)$ of $P$. 
We have $\R \underline{\Gamma} ^{\dag} _{U} (\E ^{(\bullet)}) =0$
and
$\R \underline{\Gamma} ^{\dag} _{ \overline{Y}\setminus Y} (\E ^{(\bullet)}) =0$. 
Moreover, using \ref{f!isocpotSigma}, 
since 
$\R \underline{\Gamma} ^{\dag} _{P _i } (\E ^{(\bullet)}) 
\in  
 \underrightarrow{LD} ^{\mathrm{b}} _{\Q,  \mathrm{isoc}, \text{pot-}\Sigma}(P _i,\PP/\W)$, 
since 
$Y _{i, j _i} \subset  P _i$, 
then 
we get
$\R \underline{\Gamma} ^{\dag} _{Y _{i, j _i} } (\E ^{(\bullet)}) 
\in  
 \underrightarrow{LD} ^{\mathrm{b}} _{\Q,  \mathrm{isoc}, \text{pot-}\Sigma}(Y _{i, j _i},\PP/\W)$.
We check the second part of the Lemma in the same way. 
\end{proof}

\begin{prop}
\label{trisub}
The data of coefficients 
$\smash{\underrightarrow{LD}} ^{\mathrm{b}} _{\Q,\mathrm{pot}\textrm{-}\Sigma}$
is stable by devissages, direct factors and base change. 
\end{prop}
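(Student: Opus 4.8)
The plan is to verify the three stability properties separately, in each case reducing to the corresponding property already established for the "isocrystal-level" categories $\underrightarrow{LD} ^{\mathrm{b}} _{\Q,  \mathrm{isoc}, \text{pot-}\Sigma}(Y_i,\PP/K)$ in \ref{prepot-regu-pullback} and for the ambient category $\smash{\underrightarrow{LD}} ^{\mathrm{b}} _{\Q,\mathrm{h}}$.

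First I would treat stability under devissages. Given an exact triangle $\E_1^{(\bullet)} \to \E_2^{(\bullet)} \to \E_3^{(\bullet)} \to \E_1^{(\bullet)}[1]$ in $\smash{\underrightarrow{LD}} ^{\mathrm{b}} _{\Q,\mathrm{h}}(\PP)$ with two of the three terms having potentially $\Sigma$-unipotent monodromy, I would pick smooth stratifications witnessing potential $\Sigma$-unipotence for those two terms and pass to a common refinement $(Y_0,\dots,Y_r)$ of both (this is possible since the category of smooth $k$-varieties admits such refinements). Applying $\R \underline{\Gamma} ^{\dag} _{Y_i}$ — which sends exact triangles to exact triangles — to the given triangle, one reduces to the fact, recorded in \ref{prepot-regu-pullback}, that $\underrightarrow{LD} ^{\mathrm{b}} _{\Q,  \mathrm{isoc}, \text{pot-}\Sigma}(Y_i,\PP/K)$ is a triangulated subcategory of $\smash{\underrightarrow{LD}} ^{\mathrm{b}} _{\Q,\mathrm{h}}(Y_i,\PP/K)$; hence the third term is also strat-wise in these subcategories, i.e.\ has potentially $\Sigma$-unipotent monodromy.

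For stability under direct factors, suppose $\E^{(\bullet)}$ has potentially $\Sigma$-unipotent monodromy with witnessing stratification $(Y_0,\dots,Y_r)$, and $\FF^{(\bullet)}$ is a direct factor of $\E^{(\bullet)}$ in $\smash{\underrightarrow{LD}} ^{\mathrm{b}} _{\Q,\mathrm{coh}}$. Since $\R \underline{\Gamma} ^{\dag} _{Y_i}$ is additive, $\R \underline{\Gamma} ^{\dag} _{Y_i}(\FF^{(\bullet)})$ is a direct factor of $\R \underline{\Gamma} ^{\dag} _{Y_i}(\E^{(\bullet)}) \in \underrightarrow{LD} ^{\mathrm{b}} _{\Q,  \mathrm{isoc}, \text{pot-}\Sigma}(Y_i,\PP/K)$; the stability under direct factors of the latter category — which in \ref{prepot-regu-pullback} is deduced from \ref{QUserreHol} — then gives the claim. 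For base change, I would take $\alpha\colon \W \to \W'$ in $\mathrm{DVR}(\V)$, form $\PP' := \PP \times_{\Spf \W}\Spf \W'$ with projection $\pi_\alpha$, and let $(Y_0',\dots,Y_r')$ be the stratification of $P'$ obtained by pulling back a witnessing stratification $(Y_i)$ of $P$ along $\pi_\alpha$ (base change of a smooth stratification is a smooth stratification). Using that base change commutes with local cohomological functors — part of the general commutation asserted in \ref{comm-chg-base}, since $\R \underline{\Gamma} ^{\dag} _{Y_i}$ is built from pull-backs — one gets $\R \underline{\Gamma} ^{\dag} _{Y_i'}(\W'\smash{\overset{\L}{\otimes}}^{\dag}_{\W}\E^{(\bullet)}) \simeq \W'\smash{\overset{\L}{\otimes}}^{\dag}_{\W}\R \underline{\Gamma} ^{\dag} _{Y_i}(\E^{(\bullet)})$, and then the stability under base change of $\underrightarrow{LD} ^{\mathrm{b}} _{\Q,  \mathrm{isoc}, \text{pot-}\Sigma}$ from \ref{prepot-regu-pullback} (itself resting on \ref{stab-tensorSigma}.1) finishes it.

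The only genuine subtlety — the "hard part" — is the bookkeeping needed to pass between different witnessing stratifications and to know that $\R \underline{\Gamma} ^{\dag} _{Y_i}$ of an object that is strat-wise potentially $\Sigma$-unipotent along one stratification is still so along a refinement; this is where one uses the transitivity of local cohomological functors $\R \underline{\Gamma} ^{\dag} _{Y'} \circ \R \underline{\Gamma} ^{\dag} _{Y} \simeq \R \underline{\Gamma} ^{\dag} _{Y'}$ for $Y' \subset Y$ together with the functoriality \ref{f!isocpotSigma} of $\theta^{!}$ on the isocrystal categories. Everything else is a formal consequence of additivity of $\R \underline{\Gamma} ^{\dag}$ and the already-proved properties of the isocrystal-level categories, so I expect the write-up to be short.
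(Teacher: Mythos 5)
Your proposal is correct and follows essentially the same route as the paper: for devissage one produces a common smooth stratification witnessing the property for both given terms (the paper invokes \ref{f!isocpotSigma} together with de Jong's theorem for exactly the refinement step you flag as the subtle point) and then applies the triangulated-subcategory statement of \ref{prepot-regu-pullback}, while direct factors and base change are, as you argue, immediate consequences of the isocrystal-level stabilities recorded in \ref{prepot-regu-pullback}.
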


\begin{proof}
Let $\PP$ be a  smooth formal scheme over $\W$. 
Let $\E ' \to \E \to \E '' \to \E ' [1]$ be an exact triangle 
of $D^{\mathrm{b}}_{\mathrm{h}}(\PP/\W)$
with 
$\E' ,\E'' \in D^{\mathrm{b}}_{\text{pot-}\Sigma}(\PP/\W)$.
Let 
$(P _1, \dots, P _r)$ 
be a smooth stratification
of 
the special fiber of $\PP$ 
such that
$\R \underline{\Gamma} ^{\dag} _{P _i} (\E ^{\prime (\bullet)}) 
\in  
 \underrightarrow{LD} ^{\mathrm{b}} _{\Q,  \mathrm{isoc}, \text{pot-}\Sigma}(P _i,\PP/\W)$.
For each $i= 1,\dots, r$, 
following \ref{quj+}, 
we have $\R \underline{\Gamma} ^{\dag} _{P _i} (\E ^{\prime \prime (\bullet)}) 
\in  
\underrightarrow{LD} ^{\mathrm{b}} _{\Q, \text{pot-}\Sigma}(P _i,\PP/\W)$.
Hence, there exists 
a smooth stratification
$(Y _{i,1}, \dots, Y _{i, r_i})$ of $P _i$
such that 
$\R \underline{\Gamma} ^{\dag} _{Y _{i, j _i}} (\E ^{\prime \prime (\bullet)}) 
\in  
\underrightarrow{LD} ^{\mathrm{b}} _{\Q, \mathrm{isoc}, \text{pot-}\Sigma}(Y _{i, j _i},\PP/\W)$.
Set  $J = \{ (i, j _i) \; ;\; 1\leq i\leq r ,\ 1 \leq j _i \leq r _i\}$.
By ordering the set $J$ with the lexicographic order, 
we get a smooth stratification 
$( Y _{i, j _i}) _{(i, j _i)\in J}$ 
of $P$. 
On the other hand, using \ref{f!isocpotSigma}, 
we get 
$\R \underline{\Gamma} ^{\dag} _{Y _{i, j _i}} (\E ^{\prime  (\bullet)}) 
\in  
\underrightarrow{LD} ^{\mathrm{b}} _{\Q, \mathrm{isoc}, \text{pot-}\Sigma}(Y _{i, j _i},\PP/\W)$.
From \ref{prepot-regu-pullback}.2, 
$\underrightarrow{LD} ^{\mathrm{b}} _{\Q, \mathrm{isoc}, \text{pot-}\Sigma}(Y _{i, j _i},\PP/\W)$ is a triangled subcategory of 
$\underrightarrow{LD} ^{\mathrm{b}} _{\Q, \text{h}}(Y _{i, j _i},\PP/\W)$. 
Hence, 
we get
$\R \underline{\Gamma} ^{\dag} _{Y _{i, j _i}} (\E ^{ (\bullet)}) 
\in  
\underrightarrow{LD} ^{\mathrm{b}} _{\Q, \mathrm{isoc}, \text{pot-}\Sigma}(Y _{i, j _i},\PP/\W)$,
which gives the stability of 
$\smash{\underrightarrow{LD}} ^{\mathrm{b}} _{\Q,\textrm{pot-}\Sigma}$
by devissages.
The rest of the proposition is a consequence of 
\ref{prepot-regu-pullback}.2.
\end{proof}

\begin{prop}
\label{pot-regu-pullback}
Let $\theta = (b,a,f) \colon (Y',X',\PP')\to (Y,X,\PP)$ be a morphism of frames over $\W$.
We have the factorization
\begin{equation}
\label{f!isocpotSigmaGen}
\theta ^{!}=\R \underline{\Gamma} ^{\dag} _{Y'} f ^{!} \colon
 \underrightarrow{LD} ^{\mathrm{b}} _{\Q, \mathrm{pot}\textrm{-}\Sigma}(Y,\PP /\W) 
\to
\underrightarrow{LD} ^{\mathrm{b}} _{\Q, \mathrm{pot}\textrm{-}\Sigma}(Y',\PP'/\W),
\end{equation}
and a similar one by replacing 
``$\underrightarrow{LD} _{\Q} $''
by ``$D$''.
\end{prop}

\begin{proof}
Let 
$\E ^{(\bullet)}
\in 
 \underrightarrow{LD} ^{\mathrm{b}} _{\Q, \mathrm{pot}\textrm{-}\Sigma}(Y,\PP /\W) $.
Using  \ref{trisub},
we reduce by devissage 
to the case where $Y$ is smooth and 
$\E ^{(\bullet)}
\in 
\underrightarrow{LD} ^{\mathrm{b}} _{\Q, \mathrm{isoc}, \text{pot-}\Sigma}(Y,\PP /\W) $.
Let 
$(Y ' _1, \dots, Y' _r)$ 
be a smooth stratification
of $Y'$. 
Then,  from 
\ref{f!isocpotSigma}, 
we get 
$\R \underline{\Gamma} ^{\dag} _{Y' _i} f ^{!} 
(\E ^{(\bullet)})
\in 
\underrightarrow{LD} ^{\mathrm{b}} _{\Q, \mathrm{isoc}, \text{pot-}\Sigma}(Y ' _i,\PP '/\W)$
for any integer $i$.
Hence, 
$\R \underline{\Gamma} ^{\dag} _{Y'} f ^{!} 
(\E ^{(\bullet)})
\in 
\underrightarrow{LD} ^{\mathrm{b}} _{\Q, \mathrm{pot}\textrm{-}\Sigma}(Y',\PP'/\W)$.
\end{proof}

\begin{prop}
\label{tens-potSigma}
The data of coefficients
$\smash{\underrightarrow{LD}} ^{\mathrm{b}} _{\Q,\mathrm{pot}\text{-}\Sigma}$
is stable under tensor products.
\end{prop}

\begin{proof}
Using \ref{trisub}, we get by devissage the stability from
\ref{prepot-regu-pullback}.1 and \ref{prepot-regu-pullback}.3.
\end{proof}

\begin{prop}
\label{finite-et-pushforwar}
Let 
$(b,a,f) \colon (Y',X',\PP')\to (Y,X,\PP)$ be a complete morphism of frames
such that $b  \colon Y' \to Y$ is finite, étale, and surjective.
Let $\E ^{(\bullet)}\in  \underrightarrow{LD} ^{\mathrm{b}} _{\Q, \mathrm{h}}(Y,\PP /\W)$, 
$\E ^{\prime(\bullet)}\in  \underrightarrow{LD} ^{\mathrm{b}} _{\Q, \mathrm{h}}(Y',\PP '/\W)$, 
\begin{enumerate}

\item Suppose $Y$ is smooth. 
We have 
$\E ^{(\bullet)}\in  \underrightarrow{LD} ^{\mathrm{b}} _{\Q, \mathrm{isoc},\mathrm{pot}\textrm{-}\Sigma}(Y,\PP /\W)$
if and only if 
$\R \underline{\Gamma} ^{\dag} _{Y'}f ^! (\E ^{ (\bullet)})
\in  
\underrightarrow{LD} ^{\mathrm{b}} _{\Q,\mathrm{isoc}, \mathrm{pot}\textrm{-}\Sigma}(Y',\PP' /\W)$.
We have 
$\E ^{\prime (\bullet)}\in  \underrightarrow{LD} ^{\mathrm{b}} _{\Q,\mathrm{isoc}, \mathrm{pot}\textrm{-}\Sigma}(Y',\PP '/\W)$
if and only if 
$f _{+} (\E ^{\prime(\bullet)})
\in  \underrightarrow{LD} ^{\mathrm{b}} _{\Q, \mathrm{isoc},\mathrm{pot}\textrm{-}\Sigma}(Y,\PP /\W)$.

\item  We have 
$\E ^{(\bullet)}\in  \underrightarrow{LD} ^{\mathrm{b}} _{\Q, \mathrm{pot}\textrm{-}\Sigma}(Y,\PP /\W)$
if and only if 
$\R \underline{\Gamma} ^{\dag} _{Y'}f ^! (\E ^{(\bullet)})
\in  \underrightarrow{LD} ^{\mathrm{b}} _{\Q, \mathrm{pot}\textrm{-}\Sigma}(Y',\PP' /\W)$.
Moreover, we have 
$\E ^{\prime(\bullet)}\in  \underrightarrow{LD} ^{\mathrm{b}} _{\Q, \mathrm{pot}\textrm{-}\Sigma}(Y',\PP' /\W)$
if and only if 
$f _{+} (\E ^{\prime(\bullet)})
\in  \underrightarrow{LD} ^{\mathrm{b}} _{\Q, \mathrm{pot}\textrm{-}\Sigma}(Y,\PP /\W)$.

\end{enumerate}
\end{prop}

\begin{proof}
Let us check the part 1) of the Proposition. 
From \ref{prepot-regu-pullback}.1, if 
$\E ^{(\bullet)}\in  \underrightarrow{LD} ^{\mathrm{b}} _{\Q, \mathrm{isoc},\mathrm{pot}\textrm{-}\Sigma}(Y,\PP /\W)$
then
$\R \underline{\Gamma} ^{\dag} _{Y'} f ^! (\E ^{ (\bullet)})
\in  \underrightarrow{LD} ^{\mathrm{b}} _{\Q,\mathrm{isoc}, \mathrm{pot}\textrm{-}\Sigma}(Y',\PP' /\W)$.
Conversely, suppose $\R \underline{\Gamma} ^{\dag} _{Y'} f ^! (\E ^{ (\bullet)})
\in  \underrightarrow{LD} ^{\mathrm{b}} _{\Q,\mathrm{isoc}, \mathrm{pot}\textrm{-}\Sigma}(Y',\PP' /\W)$.
Since $b$ is finite and étale, we get the functor
$f _+
\colon 
\underrightarrow{LD} ^{\mathrm{b}} _{\Q,  \textrm{h-isoc}}(Y',\PP'/\W)
\to 
\underrightarrow{LD} ^{\mathrm{b}} _{\Q,  \textrm{h-isoc}}(Y,\PP/\W)$
(see Notation \ref{IsocSigmapre}.3).
Since 
$\E ^{(\bullet)}$ is a direct factor of 
$f _+ \R \underline{\Gamma} ^{\dag} _{Y'} f ^! \E ^{(\bullet)}$,
then
$\E ^{(\bullet)}
\in 
\underrightarrow{LD} ^{\mathrm{b}} _{\Q,  \textrm{h-isoc}}(Y,\PP/\W)$.
Recalling $\R \underline{\Gamma} ^{\dag} _{Y'} f ^! (\E ^{ (\bullet)})
\in  \underrightarrow{LD} ^{\mathrm{b}} _{\Q,\mathrm{isoc}, \mathrm{pot}\textrm{-}\Sigma}(Y',\PP' /\W)$, 
this yields almost by definition
$\E ^{(\bullet)}\in  \underrightarrow{LD} ^{\mathrm{b}} _{\Q, \mathrm{isoc},\mathrm{pot}\textrm{-}\Sigma}(Y,\PP /\W)$.

Suppose $f _{+} (\E ^{\prime(\bullet)})
\in  \underrightarrow{LD} ^{\mathrm{b}} _{\Q, \mathrm{isoc},\mathrm{pot}\textrm{-}\Sigma}(Y,\PP /\W)$.
Then from the first part 
$\R \underline{\Gamma} ^{\dag} _{Y'} f ^{!}f _{+} (\E ^{\prime (\bullet)})
\in 
\underrightarrow{LD} ^{\mathrm{b}} _{\Q,\mathrm{isoc}, \mathrm{pot}\textrm{-}\Sigma}(Y',\PP' /\W)$.
Since $Y' \to Y$ is finite étale, then 
$Y'$ is an open component connected component of $Y' \times _Y Y'$ (see  \cite[I.3.12]{milne}).
Hence, using the base change isomorphism \ref{basechange},
we check that 
$\E ^{\prime (\bullet)}$ is a direct factor of 
$\R \underline{\Gamma} ^{\dag} _{Y'} f ^{!}f _{+} (\E ^{\prime (\bullet)})$. 
By using \ref{QUserreHol}.2,
this yields 
$\E ^{\prime (\bullet)}
\in 
\underrightarrow{LD} ^{\mathrm{b}} _{\Q,\mathrm{isoc}, \mathrm{pot}\textrm{-}\Sigma}(Y',\PP' /\W)$.
Conversely, suppose
$\E ^{\prime (\bullet)}
\in 
\underrightarrow{LD} ^{\mathrm{b}} _{\Q,\mathrm{isoc}, \mathrm{pot}\textrm{-}\Sigma}(Y',\PP' /\W)$.
There exists a Galois finite, étale morphism
$b'\colon Y'' \to Y$ that factors through $b \colon Y' \to Y$ (see the beginning of \cite[I.5]{milne}
or \cite[V.4.g) and V.4.1 and V.7]{sga1}). 
Denoting by $b'' \colon Y'' \to Y'$ this factorization we get $b'= b \circ b''$. 
Since $b''$ is in particular projective, 
we get a morphism of frames over $\W$ of the form
$ (b'',a'',f'') \colon (Y'',X'',\PP'')\to (Y',X',\PP')$ 
where $a''$ is projective, $f''$ is projective and smooth,
$Y''$ is dense in $X''$.
Since 
$\E ^{\prime (\bullet)}$
 is a direct factor of 
 $f'' _{+} \R \underline{\Gamma} ^{\dag} _{Y''} f ^{\prime \prime !}  (\E ^{\prime (\bullet)})$, 
 then 
$f _{+} (\E ^{\prime (\bullet)})$ is a direct factor of 
$f _+   f'' _{+} \R \underline{\Gamma} ^{\dag} _{Y''} f ^{\prime \prime !} (\E ^{\prime (\bullet)})$. 
Hence, by using \ref{QUserreHol}.2 and \ref{regu-pullback}.2,
we can suppose that $f$ is a Galois morphism.
In that case, $Y' \times _Y Y'= \sqcup _{\sigma \in G} Y ' _\sigma$, where $G = \mathrm{Aut} _Y (Y')$
and $Y ' _\sigma$ is a copy of $Y '$.
Hence, using the base change isomorphism \ref{basechange},
we check that 
$\R \underline{\Gamma} ^{\dag} _{Y'} f ^{!}f _{+} (\E ^{\prime (\bullet)})
= 
\oplus _{\sigma \in G} \E ^{\prime (\bullet)} _\sigma$,
where
$\E ^{\prime (\bullet)} _\sigma$ means a copy of 
$\E ^{\prime (\bullet)}$. 
Hence, 
$\R \underline{\Gamma} ^{\dag} _{Y'} f ^{!}f _{+} (\E ^{\prime (\bullet)})
\in 
\underrightarrow{LD} ^{\mathrm{b}} _{\Q,\mathrm{isoc}, \mathrm{pot}\textrm{-}\Sigma}(Y',\PP' /\W)$.
Then from the first part 
$f _{+} (\E ^{\prime (\bullet)})
\in 
\underrightarrow{LD} ^{\mathrm{b}} _{\Q,\mathrm{isoc}, \mathrm{pot}\textrm{-}\Sigma}(Y,\PP /\W)$.

Let us check the part 2) of the Proposition. 
Moreover, since 
$\E ^{ (\bullet)}$
 is a direct factor of 
 $f _{+} \R \underline{\Gamma} ^{\dag} _{Y} f ^{\prime  !}  (\E ^{ (\bullet)})$, 
this yields that if 
$\R \underline{\Gamma} ^{\dag} _{Y'}f ^! (\E ^{ (\bullet)})
\in  
\underrightarrow{LD} ^{\mathrm{b}} _{\Q,\mathrm{pot}\textrm{-}\Sigma}(Y',\PP' /\W)$
then 
$\E ^{(\bullet)}\in  \underrightarrow{LD} ^{\mathrm{b}} _{\Q, \mathrm{pot}\textrm{-}\Sigma}(Y,\PP /\W)$.
From \ref{pot-regu-pullback}, the converse is known. 
Finally, we proceed as in the part 1) of the proof 
to check that 
$\E ^{\prime(\bullet)}\in  \underrightarrow{LD} ^{\mathrm{b}} _{\Q, \mathrm{pot}\textrm{-}\Sigma}(Y',\PP' /\W)$
if and only if 
$f _{+} (\E ^{\prime(\bullet)})
\in  \underrightarrow{LD} ^{\mathrm{b}} _{\Q, \mathrm{pot}\textrm{-}\Sigma}(Y,\PP /\W)$.
\end{proof}

\begin{lem}
\label{potSigmavsduals}
The data of coefficients 
$\smash{\underrightarrow{LD}} ^{\mathrm{b}} _{\Q, \text{pot-}\Sigma} $
is almost stable by dual functors. 
\end{lem}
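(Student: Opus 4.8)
The plan is to unwind the definition of \emph{almost stable under dual functors}: we fix a data of coefficients $\mathfrak{C}$ which is stable by devissages, direct factors and push-forwards and which contains $\smash{\underrightarrow{LD}} ^{\mathrm{b}} _{\Q,\text{pot-}\Sigma}$, and, given a quasi-proper smooth formal $\W$-scheme $\X$ and $\E ^{(\bullet)} \in \smash{\underrightarrow{LD}} ^{\mathrm{b}} _{\Q,\text{pot-}\Sigma}(\X)$, we must prove $\DD _{\X}(\E ^{(\bullet)}) \in \mathfrak{C}(\X)$, i.e. $\E ^{(\bullet)} \in \mathfrak{C} ^{\vee}(\X)$. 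I first record two preliminary facts. Since duality exchanges the two non-distinguished terms of an exact triangle and preserves direct factors, $\mathfrak{C} ^{\vee}$ is again stable by devissages and direct factors (this is \ref{S(D,C)stability}.3). Next, taking $\theta=\mathrm{id}$ (restricted to the connected components of $Y$) in Definition \ref{IsocpotSigma} shows that any object of $\underrightarrow{LD} ^{\mathrm{b}} _{\Q,\mathrm{isoc},\Sigma}(Y,\overline{Y}/K)$ lies in $\underrightarrow{LD} ^{\mathrm{b}} _{\Q,\mathrm{isoc},\text{pot-}\Sigma}(Y,\PP/K)$; by Proposition \ref{trisub} (stability under devissages) and the minimality of $\underrightarrow{LD} ^{\mathrm{b}} _{\Q,\Sigma}$ we deduce $\underrightarrow{LD} ^{\mathrm{b}} _{\Q,\Sigma} \subset \smash{\underrightarrow{LD}} ^{\mathrm{b}} _{\Q,\text{pot-}\Sigma} \subset \mathfrak{C}$. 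Combined with Theorem \ref{dual}, which asserts that $\DD$ is an autoequivalence of $\underrightarrow{LD} ^{\mathrm{b}} _{\Q,\Sigma}$, this gives $\underrightarrow{LD} ^{\mathrm{b}} _{\Q,\Sigma} \subset \mathfrak{C} ^{\vee}$.

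The second step is a devissage. By Definition \ref{dfn-potSigma} there is a smooth stratification $(Y _0,\dots,Y _r)$ of the special fiber of $\X$ with $\R \underline{\Gamma} ^{\dag} _{Y _i}(\E ^{(\bullet)}) \in \underrightarrow{LD} ^{\mathrm{b}} _{\Q,\mathrm{isoc},\text{pot-}\Sigma}(Y _i,\X/K)$ for all $i$; the excision triangles attached to this stratification exhibit $\E ^{(\bullet)}$ as an iterated extension of the $\R \underline{\Gamma} ^{\dag} _{Y _i}(\E ^{(\bullet)})$, so by the devissage stability of $\mathfrak{C} ^{\vee}$ it suffices to treat $\E ^{(\bullet)} \in \underrightarrow{LD} ^{\mathrm{b}} _{\Q,\mathrm{isoc},\text{pot-}\Sigma}(Y,\X/K)$ with $Y$ smooth. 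Using the truncation triangles of the canonical $t$-structure, and then splitting the smooth $Y$ into its (open-and-closed) irreducible components via the functors $\R \underline{\Gamma} ^{\dag}$ — which preserve potential $\Sigma$-unipotence by \ref{prepot-regu-pullback} — I reduce to $\E \in \mathrm{Isoc} ^{\dag \dag} _{\text{pot-}\Sigma}(Y,\X/K)$ with $Y$ irreducible smooth; I write $\E$ also for its image under $\underrightarrow{\lim}$ and for its pushforward $j _{+}\E$ to $\X$, where $j\colon Y\hookrightarrow X$.

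The third step invokes the alteration. By Definition \ref{IsocpotSigma} there is a morphism of frames $\theta=(b,a,f)\colon (Y',X',\PP')\to (Y,\overline{Y},\X)$ with $a$ projective surjective generically finite and etale, $Y'=a^{-1}(Y)$, $X'$ smooth, $X'\setminus Y'$ a simple normal crossing divisor of $X'$, and $\theta ^{!}(\E)\in \mathrm{Isoc} ^{\dag \dag} _{\Sigma}(Y',X'/K)$ (after the usual completion of frames we may take $\PP'$ proper over $\W$, so that $f$ is proper, $\X$ being separated over $\W$). Then $\theta ^{!}(\E)\in \mathrm{Isoc} ^{\dag \dag} _{\Sigma}(Y',X'/K)\subset \underrightarrow{LD} ^{\mathrm{b}} _{\Q,\Sigma}(\smash{\widehat{\D}} _{\PP'} ^{(\bullet)})$, hence by the first paragraph $\DD _{\PP'}(\theta ^{!}\E)\in \underrightarrow{LD} ^{\mathrm{b}} _{\Q,\Sigma}(\smash{\widehat{\D}} _{\PP'} ^{(\bullet)})\subset \mathfrak{C}(\PP')$. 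On the other hand, since $a$ is projective surjective generically finite and etale, $\E$ is a direct factor of $\theta _{+}\theta ^{!}(\E)=f _{+}(\theta ^{!}\E)$, exactly as in \ref{remIsocSigma}.\ref{remIsocSigma1}. Applying $\DD _{\X}$ and using relative duality \cite{Vir04} (legitimate since $f$ is proper, so $f _{+}=f _{!}$), the object $\DD _{\X}(\E)$ is a direct factor of $\DD _{\X}(f _{+}\theta ^{!}\E)\riso f _{+}(\DD _{\PP'}\theta ^{!}\E)$, which lies in $\mathfrak{C}(\X)$ by the push-forward stability of $\mathfrak{C}$; by stability under direct factors, $\DD _{\X}(\E)\in \mathfrak{C}(\X)$. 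Reassembling the devissages of the second step yields $\E ^{(\bullet)}\in \mathfrak{C} ^{\vee}(\X)$, hence $\smash{\underrightarrow{LD}} ^{\mathrm{b}} _{\Q,\text{pot-}\Sigma}\subset \mathfrak{C} ^{\vee}$.

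The heart of the argument — and the main obstacle — is the last step: transporting the genuine $\Sigma$-unipotence of $\theta ^{!}\E$ (where Theorem \ref{dual} applies) back along the alteration. This hinges on (i) realizing the generically-finite-etale alteration, which is given at the level of frames on the special fiber, by a proper morphism $f$ of formal schemes, so that $\DD$ commutes with $f _{+}$ — this uses that quasi-proper smooth formal schemes are separated, so any morphism to $\X$ from a proper formal scheme is proper — and (ii) the splitting of $\E$ off $\theta _{+}\theta ^{!}\E$, which is already used in the paper. The remaining work, namely checking at each stage of the devissage that the reduced object stays potentially $\Sigma$-unipotent and that the closure properties of $\mathfrak{C} ^{\vee}$ under devissages and direct factors propagate the conclusion upward, is routine.
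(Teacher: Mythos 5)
Your proof is correct and follows essentially the same route as the paper's: devissage to a single object of $\mathrm{Isoc} ^{\dag \dag} _{\text{pot-}\Sigma}$ on an irreducible smooth $Y$, then the alteration $\theta$ from the definition, Theorem \ref{dual} applied to $\theta ^{!}(\E)$, and the splitting of $\DD _{\PP}(\E)$ off $\theta _{+}\DD _{\PP'}\theta ^{!}(\E)$ via relative duality, concluded by the stability of $\mathfrak{C}$ under push-forwards and direct factors. The only difference is that you spell out explicitly some points the paper leaves implicit (the inclusion $\underrightarrow{LD} ^{\mathrm{b}} _{\Q,\Sigma}\subset \smash{\underrightarrow{LD}} ^{\mathrm{b}} _{\Q,\text{pot-}\Sigma}$ and the properness of $f$), which is fine.
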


\begin{proof}
Let $\mathfrak{C}$ be a data of coefficients containing 
$\smash{\underrightarrow{LD}} ^{\mathrm{b}} _{\Q, \text{pot-}\Sigma} $ and 
stable under  devissages, under direct factors and under realizable pushforwards. 
Let $\W$ be an object of $\mathrm{DVR}  (\V)$, 
$\PP$ be a  smooth formal scheme over $\W$.
Let $\E ^{(\bullet)} \in 
\smash{\underrightarrow{LD}} ^{\mathrm{b}} _{\Q, \text{pot-}\Sigma} (\PP)$.
We have to check that $\DD _{\PP} (\E ^{(\bullet)}) \in  \mathfrak{C} (\PP)$.
Since $\mathfrak{C}$ is stable by devissages, we can suppose
there exists an irreducible smooth subvariety $Y$ of $P$ 
such that
$\E ^{(\bullet)} \in   \underrightarrow{LD} ^{\mathrm{b}} _{\Q,  \mathrm{isoc}, \text{pot-}\Sigma}(Y,\PP/\W)$.
Again by devissage, we can suppose that 
$\E := \underrightarrow{\lim}~ \E ^{(\bullet)} \in \mathrm{Isoc} ^{\dag \dag} _{\text{pot-}\Sigma}(Y/\W)$,
where
$\underrightarrow{\lim}$
is the functor of \ref{limeqcat}. 
We denote by $\overline{Y}$ the closure of $Y$ in $P$.
There exists 
a morphism of frames
$\theta = (b,a,f)\colon 
(Y', \overline{Y'}, \PP')
\to 
(Y, \overline{Y}, \PP)$
such that $f$ is proper, $a$ is generically finite and etale,
$\overline{Y'}$ is smooth, 
$Y '= a ^{-1} (Y)$,
$\overline{Y'} \setminus Y '$ is a strict normal crossing divisor in $\overline{Y'}$
and 
$\theta ^{!}(\E )= 
\R \underline{\Gamma} ^\dag _{Y'} f ^{!} (\E )  
\in \mathrm{Isoc} ^{\dag \dag} _{\Sigma}(Y', \overline{Y'}/\W)$. 
From \ref{dual}, we get 
$\DD _{\PP '} \theta ^{!}(\E) \in 
\smash{\underrightarrow{LD}} ^{\mathrm{b}} _{\Q, \Sigma} (\PP')
\subset
\smash{\underrightarrow{LD}} ^{\mathrm{b}} _{\Q, \text{pot-}\Sigma} (\PP')
\subset \mathfrak{C} (\PP')$.
Since $\E $ is a direct factor of $\theta _+ \circ \theta ^{!} (\E)$ (where we set 
$\theta _+ := f _+$), 
by using the relative duality isomorphism (see \ref{rel-dual-isom}),
we check that 
$\DD _{\PP } (\E )$
is a direct factor of 
$\theta _+ \circ \DD _{\PP '}   \theta ^{!} (\E )$.
Since $\mathfrak{C}$ is stable by direct factors and realizable (in fact proper would have been sufficient here) pushforwards, 
this yields
$\DD _{\PP } (\E )
\in \mathfrak{C} (\PP)$.

\end{proof}

\subsection{Quasi-unipotence, coefficients satisfying semi-stable reduction property}

\begin{dfn}
\label{dfnqu}
From  
\ref{dfn-potSigma}.2,
\ref{trisub},
\ref{pot-regu-pullback},
\ref{tens-potSigma},
\ref{potSigmavsduals}, 
the data of coefficients $\smash{\underrightarrow{LD}} ^{\mathrm{b}} _{\Q,\text{pot-}\Sigma}$ 
satisfies the hypotheses of \ref{dfnquprop} concerning the data $\mathfrak{D}$.
Since $\underrightarrow{LD} ^{\mathrm{b}} _{\Q, \mathrm{ovcoh}}$ is 
stable by direct factors, devissages, extraordinary pullbacks,
realizable pushforwards (see \ref{ovcoh-invim} and \ref{stab-propersupp}), then we can define
$\smash{\underrightarrow{LD}} ^{\mathrm{b}} _{\Q,\text{q-}\Sigma}   := 
T _{\mathrm{min}}(\smash{\underrightarrow{LD}} ^{\mathrm{b}} _{\Q,\text{pot-}\Sigma},
\underrightarrow{LD} ^{\mathrm{b}} _{\Q, \mathrm{ovcoh}})$.
When $\Sigma = 0$, we put 
$\smash{\underrightarrow{LD}} ^{\mathrm{b}} _{\Q,\text{qu}}  :=\smash{\underrightarrow{LD}} ^{\mathrm{b}} _{\Q,\text{q-}\Sigma}  $.
The objects of the data of coefficients $\smash{\underrightarrow{LD}} ^{\mathrm{b}} _{\Q,\text{q-}\Sigma}$ (resp. 
$\smash{\underrightarrow{LD}} ^{\mathrm{b}} _{\Q,\text{qu}}$) are called 
``quasi-$\Sigma$-unipotent'' (resp. ``quasi-unipotent'').
We also say that they have ``quasi-$\Sigma$-unipotent monodromy''.

\end{dfn}

\begin{ntn}
\label{dfnst}
We define the data of coefficients with potentially Frobenius structure $\smash{\underrightarrow{LD}} ^{\mathrm{b}} _{\Q,F}$ as follows. 
Let $\W$ be an object of $\mathrm{DVR}  (\V,\sigma)$, 
$\X$ be a  smooth formal scheme over $\W$. 
The category $\smash{\underrightarrow{LD}} ^{\mathrm{b}} _{\Q,F}( \X)$ is by definition 
the full subcategory of $\underrightarrow{LD} ^{\mathrm{b}} _{\Q, \mathrm{ovcoh}} (\smash{\widehat{\D}} _{\X} ^{(\bullet)})$ whose 
objects are equal to 
the essential image of the canonical functor
which forgets Frobenius structures :
$F\text{-}\underrightarrow{LD} ^{\mathrm{b}} _{\Q, \mathrm{ovcoh}} (\smash{\widehat{\D}} _{\X} ^{(\bullet)})
\to 
\underrightarrow{LD} ^{\mathrm{b}} _{\Q, \mathrm{ovcoh}} (\smash{\widehat{\D}} _{\X} ^{(\bullet)})$. 

\end{ntn}

\begin{rem}
Let 
$\W$ 
be an object of $\mathrm{DVR}  (\V,\sigma)$, 
$\X$ be a  smooth formal scheme over $\W$. 
From Kedlaya's semistable reduction theorem (see \ref{remIsocSigma2}), we remark that 
$\smash{\underrightarrow{LD}} ^{\mathrm{b}} _{\Q,F}(\X)$ is contained in
$\smash{\underrightarrow{LD}} ^{\mathrm{b}} _{\Q,\text{u} } (\X)$ (recall the notation of \ref{dfn-potSigma}),
which is at its turn contained in 
$\smash{\underrightarrow{LD}} ^{\mathrm{b}} _{\Q,\text{h} } (\X)$.

\end{rem}

\begin{ntn}
\begin{enumerate}

\item We define by induction on $n\in \N$ the data of coefficients with potentially Frobenius structure
$\smash{\underrightarrow{LD}} ^{\mathrm{b}} _{\Q,\text{u} ^{+},n} $
as follows. 
For $n=0$, we put 
$\smash{\underrightarrow{LD}} ^{\mathrm{b}} _{\Q,\text{u} ^{+},0} 
=
\smash{\underrightarrow{LD}} ^{\mathrm{b}} _{\Q,\text{u}} $.
Suppose 
$\smash{\underrightarrow{LD}} ^{\mathrm{b}} _{\Q,\text{u} ^{+},n} $ constructed.
Let 
$\W$ be an object of $\mathrm{DVR}  (\V,\sigma)$, 
$\PP$ be a  smooth formal scheme over $\W$.
The category $\smash{\underrightarrow{LD}} ^{\mathrm{b}} _{\Q,\text{u} ^{+},n+1} (\PP)$
 is by definition the full subcategory of 
 $\smash{\underrightarrow{LD}} ^{\mathrm{b}} _{\Q,\text{u} ,n} (\PP)$
 of complexes $\E^{(\bullet) }$ 
satisfying  the following property: 
for any morphism $f \colon \X  \to \PP$ of smooth formal $\W$-schemes,
for any realizable morphism $g \colon \X  \to \Y '$ of smooth formal $\W$-schemes,
for any subscheme $Z$ of $X$ which is proper over $Y'$ (via $g$),
we have 
$g _{+} \R \underline{\Gamma} ^\dag _{Z} f ^!
(\E^{(\bullet) })
\in 
\smash{\underrightarrow{LD}} ^{\mathrm{b}} _{\Q,\text{u} ,n} (\Y')$.

\item We set $\smash{\underrightarrow{LD}} ^{\mathrm{b}} _{\Q,\text{u} ^{+}} 
:= \cap _{n\in\N}\smash{\underrightarrow{LD}} ^{\mathrm{b}} _{\Q,\text{u} ^{+},n} $.

\item The constructions of $T _{\mathrm{max}}$ and $T _{\mathrm{min}}$ in \ref{TminTmax}
are still valid if we restrict to data of coefficients with potentially Frobenius structure
instead of data of coefficients.
From \cite{caro-Tsuzuki}, the data 
$\smash{\underrightarrow{LD}} ^{\mathrm{b}} _{\Q,F}$
satisfies the hypotheses of \ref{dfnquprop} concerning the data $\mathfrak{D}$.
Moreover, the data 
$ \smash{\underrightarrow{LD}} ^{\mathrm{b}} _{\Q,\text{u} ^{+}} $
is 
stable by devissages, direct factors, extraordinary pullbacks  and realizable pushforwards. 
Hence, we get the following data of coefficients with potentially Frobenius structure by putting 
$\smash{\underrightarrow{LD}} ^{\mathrm{b}} _{\Q,\text{st}}:= 
T _{\mathrm{max}} (\smash{\underrightarrow{LD}} ^{\mathrm{b}} _{\Q,F}, \smash{\underrightarrow{LD}} ^{\mathrm{b}} _{\Q,\text{u} ^{+}} )$. 
\end{enumerate}

\end{ntn}

\begin{empt}
Let $\W$ be an object of $\mathrm{DVR}  (\V,\sigma)$ and $\PP$ be a  smooth formal scheme over $\W$.
We denote by 
$D ^{\mathrm{b}}  _{\text{q-}\Sigma} (\smash{\D} ^\dag _{\PP\Q})$
(resp. $D ^{\mathrm{b}}  _\text{qu} (\smash{\D} ^\dag _{\PP\Q})$
resp. $D ^{\mathrm{b}}  _\text{st} (\smash{\D} ^\dag _{\PP\Q})$) 
the essential image of 
$\smash{\underrightarrow{LD}} ^{\mathrm{b}} _{\Q,\text{q-}\Sigma} (\PP)$ 
(resp. 
$\smash{\underrightarrow{LD}} ^{\mathrm{b}} _{\Q,\text{qu}} (\PP)$, resp. 
$\smash{\underrightarrow{LD}} ^{\mathrm{b}} _{\Q,\text{st}} (\PP)$) 
by  the functor $\underrightarrow{\lim}$ of 
\ref{limeqcat}
\end{empt}

\begin{rem}
\begin{enumerate}

\item The data of coefficients $\smash{\underrightarrow{LD}} ^{\mathrm{b}} _{\Q,\text{st}}$
contains 
$\smash{\underrightarrow{LD}} ^{\mathrm{b}} _{\Q,F}$
and is contained in 
$\smash{\underrightarrow{LD}} ^{\mathrm{b}} _{\Q,\text{u}}$.
In particular, 
the isocrystals in 
$\smash{\underrightarrow{LD}} ^{\mathrm{b}} _{\Q,\text{st}}$
satisfy Kedlaya's semistable reduction theorem.

\item 
From  
\ref{trisub},
\ref{pot-regu-pullback},
\ref{tens-potSigma},
\ref{potSigmavsduals}, 
we remark that if the data of coefficients 
$\smash{\underrightarrow{LD}} ^{\mathrm{b}} _{\Q,\text{u}}$
is stable under pushforwards (which is unlikely but this remains an open question), 
then we get 
$\smash{\underrightarrow{LD}} ^{\mathrm{b}} _{\Q,\text{u}}
=
\smash{\underrightarrow{LD}} ^{\mathrm{b}} _{\Q,\text{u} ^{+}}
=
\smash{\underrightarrow{LD}} ^{\mathrm{b}} _{\Q,\text{st}}$.

\end{enumerate}

\end{rem}

\bibliographystyle{alpha}

\def\cprime{$'$}

\end{document}